 \newtheorem{theorem}{Theorem}[section]
 \newtheorem{lemma}[theorem]{Lemma}
 \newtheorem{proposition}[theorem]{Proposition}
 \newtheorem{corollary}[theorem]{Corollary}
 \newtheorem*{theorem*}{Theorem}
\newtheorem*{proposition*}{Proposition}
\newtheorem*{lemma*}{Lemma}
\theoremstyle{definition}
\newtheorem{definition}[theorem]{Definition}
 \theoremstyle{remark}
 \newtheorem{example}[theorem]{Example}
 \newtheorem{remark}[theorem]{Remark}
\newcommand{\op}[1]{\operatorname{#1}}
\newcommand{\acou}[2]{\ensuremath{\left\langle #1 , #2 \right\rangle}} 
\newcommand{\acoup}[2]{\ensuremath{\left(#1,#2\right)}}
\newcommand{\brak}[1]{\ensuremath{\langle #1\rangle}}
\newcommand{\Tr}{\ensuremath{\op{Tr}}}
\newcommand{\tr}{\op{tr}}
\def\XXint#1#2#3{{\setbox0=\hbox{$#1{#2#3}{\int}$}
\vcenter{\hbox{$#2#3$}}\kern-.5\wd0}}
\newcommand{\Str}{\op{Str}}
\newcommand{\ind}{\op{ind}}
\newcommand{\Ch}{\op{Ch}}
\newcommand{\bCh}{\op{\mathbf{Ch}}}
\newcommand{\C}{\ensuremath{\mathbb{C}}} 
\newcommand{\K}{\ensuremath{\mathbf{k}}} 
\newcommand{\N}{\ensuremath{\mathbb{N}}} 
\newcommand{\Q}{\ensuremath{\mathbb{Q}}} 
\newcommand{\R}{\ensuremath{\mathbb{R}}} 
\newcommand{\Z}{\ensuremath{\mathbb{Z}}}
\newcommand{\fS}{\ensuremath{\mathfrak{S}}}
\newcommand{\Ca}[1]{\ensuremath{\mathcal{#1}}}
\newcommand{\cA}{\Ca{A}}
\newcommand{\cE}{\Ca{E}}
\newcommand{\cH}{\ensuremath{\mathcal{H}}}
\newcommand{\cI}{\ensuremath{\mathcal{I}}}
\newcommand{\cL}{\ensuremath{\mathcal{L}}}
\newcommand{\cN}{\Ca{N}}
\newcommand{\cU}{\ensuremath{\mathcal{U}}}
\newcommand{\cV}{\ensuremath{\mathcal{V}}}
\newcommand{\sC}{\mathscr{C}}
\newcommand{\sE}{\mathscr{E}}
\newcommand{\sS}{\ensuremath{\slashed{S}}}
\newcommand{\sD}{\ensuremath{\slashed{D}}}
\newcommand{\ev}{{\textup{ev}}}
\newcommand{\odd}{{\textup{odd}}}
\newcommand{\coker}{\op{coker}} 
\newcommand{\Hom}{\op{Hom}}
\newcommand{\End}{\ensuremath{\op{End}}}
\newcommand{\dom}{\op{dom}}
\newcommand{\bt}{\bullet}
\newcommand{\hotimes}{\hat\otimes}
\newcommand{\CM}{\textup{CM}}
\newcommand{\CS}{\textup{CS}}
\newcommand{\bC}{\mathbf{C}}
\newcommand{\nC}{\overline{C}}
\newcommand{\HC}{\op{HC}}
\newcommand{\HP}{\op{HP}}
\newcommand{\bHC}{\op{\mathbf{HC}}}
\newcommand{\bHP}{\op{\mathbf{HP}}}
\newcommand{\bfH}{\op{\mathbf{H}}}
\newcommand{\Tot}{\op{Tot}}
\newcommand{\Diag}{\op{Diag}}
\newcommand{\AW}{\op{AW}}
\newcommand{\wb}{\widebar{b}}
\newcommand{\wB}{\widebar{B}}
\newcommand{\wC}{\widebar{C}}
\newcommand{\wbd}{\widebar{d}}
\newcommand{\ws}{\widebar{s}}
\newcommand{\wt}{\widebar{t}}
\newcommand{\wT}{\widebar{T}}
\newcommand{\wpi}{\widebar{\pi}}
\newcommand{\wepsn}{\widebar{\varepsilon}^\natural}
\newcommand{\OG}{{\overline{\Gamma}}}
\newcommand{\restr}[2]{\left. #1\right|_{#2}}
\renewcommand{\frown}{\smallfrown}
\renewcommand{\smile}{\smallsmile}
\numberwithin{equation}{section}
\begin{document}

\title{NONCOMMUTATIVE GEOMETRY AND CONFORMAL GEOMETRY. I. LOCAL INDEX FORMULA AND CONFORMAL INVARIANTS}
 \author{Rapha\"el Ponge}
 \address{Department of Mathematical Sciences, Seoul National University, Seoul, South Korea}
 \email{ponge.snu@gmail.com}
 \author{Hang Wang}
 \address{School of Mathematical Sciences, University of Adelaide, Adelaide, Australia}
 \email{hang.wang01@adelaide.edu.au}

 \thanks{R.P.\ was partially supported by Research Resettlement Fund and Foreign Faculty Research Fund of Seoul National University, and by 
 grants 2013R1A1A2008802 and 2016R1D1A1B01015971 of National Research Foundation of Korea. H.W.\ was partially supported by Discovery Early Career Researcher Award DE160100525 of Australian Research Council}
  
\begin{abstract}
    This paper is part of a series of articles on noncommutative geometry and conformal geometry. In this paper, we reformulate the local index formula in conformal geometry in such a way to take into account of the action of conformal diffeomorphisms.  We also construct and compute a whole new family of geometric conformal invariants associated with conformal diffeomorphisms. This includes conformal invariants associated with equivariant characteristic classes. The approach of this paper involves using various tools from noncommutative geometry, such as twisted spectral triples and cyclic theory. An important step is to establish the conformal invariance of the Connes-Chern character of the conformal Dirac spectral triple of Connes-Moscovici. Ultimately, however, the main results of the paper are stated in a purely differential-geometric fashion. 
 \end{abstract}

\maketitle

\section{Introduction}
This paper is part of a series of papers applying techniques of noncommutative geometry in the settings of conformal geometry and noncommutative versions of conformal geometry (\emph{cf}.~\cite{PW:JNCG16, PW:AIM15} for other papers of this series). The present paper has two main results. 

The first main result is the reformulation of the local index formula in conformal geometry in such a way to take into account the action of conformal diffeomorphisms (Theorem~\ref{thm:LIF-Conformal-Geometry}). This is in the line of research initiated by Alain Connes and Henri Moscovici (see~\cite{CM:GAFA95, CM:TGNTQF, Mo:LIFTST}). In particular, the existence of such an index formula was hinted to by Moscovici (see~\cite[Remark~3.8]{Mo:LIFTST}). 

The second main result is the construction and explicit computation of a whole new families of geometric conformal invariants attached with conformal diffeomorphisms (Theorem~\ref{thm:Confinv.main}). This includes conformal invariants associated with equivariant characteristic classes (Theorem~\ref{thm:Confinv.characteristic-classes}). These conformal invariants are not of the same type of the conformal invariants considered by Alexakis~\cite{Al:DGCI} in his proof of the conjecture of Deser-Schwimmer~\cite{DS:GCCAAD} on the characterization of global conformal invariants. Our conformal invariants take into account the action of the group of conformal diffeomorphisms, i.e., the conformal gauge group. This should of be of interest in the context of string theory and conformal gravity. In addition, our conformal invariants also shed new light on the conformal indices of Branson-{\O}rsted~\cite{BO:CM86, BO:DGA91} (see end of Section~\ref{sec:Conformal-Invariants}).

To put things into context, recall that, given an action of a group $G$ on a manifold $M$, the orbit space $M/G$ has a natural manifold structure when the action is free and proper. In general, however, $M/G$ need not even be Hausdorff, and so it is difficult to use $M/G$ to extract much geometric or even topological information for general group actions. The solution proposed by noncommutative geometry (in the sense of Connes~\cite{Co:NCG}) is to trade the space $M/G$ for the crossed-product algebra $C^\infty(M)\rtimes G$. This algebra always makes sense independently how wild the action might be; the non-Hausdorffness of the quotient only pertains through the noncommutativity of $C^\infty(M)\rtimes G$. 

More generally, noncommutative geometry provides us with a general  framework that allows us to deal with a variety of geometric problems whose noncommutative natures prevent us from using classical techniques. In this framework spaces are traded for algebras which formally play the roles of the algebras of functions on (ghost) noncommutative spaces. From this point of view, a noncommutative manifold is usually represented by a spectral triple $(\cA, \cH, D)$, where $\cA$ is an algebra represented by bounded operators on the Hilbert space $\cH$ and $D$ is a selfadjoint unbounded operator that commutes with $\cA$ up to bounded operators. This operator accounts for the geometric properties of the spectral triple. The paradigm of a spectral triple is provided by a Dirac spectral triple associated with a Dirac operator on a spin manifold.  

Much like with Dirac operators, the datum of a spectral triple $(\cA,\cH, D)$ gives rise to an index problem $\cE \rightarrow \ind D_{\nabla^\cE}$ in terms of the Fredholm indices of operators obtained by twisting the operator $D$ with connections on noncommutative vector bundles $\cE$ (i.e., finitely generated projective modules over $\cA$). These indices are $K$-theory invariants.  The relevant noncommutative analogue of de Rham theory is provided by the cyclic homology and cyclic cohomology of Connes~\cite{Co:Ober81, Co:CRAS83, Co:NCDG} (see also Tsygan~\cite{Ts:UMN83}). In particular, Connes~\cite{Co:NCDG} established the following index formula, 
\begin{equation}
 \ind D_{\nabla^\cE}= \acou{\Ch(D)}{\Ch(\cE)} \qquad \forall (\cE, \nabla^\cE),
 \label{eq:Intro.CC-character-index}
\end{equation}
where $\Ch(\cE)$ is the Chern character in cyclic homology and $\Ch(D)$ is a class in cyclic cohomology, called the Connes-Chern character. Under further assumptions, 
the Connes-Chern character is represented by the CM cocycle~\cite{CM:CMP93, CM:GAFA95}. The components of the CM cocycle are given 
by formulas that are local in the sense they involve a version for spectral triples of the noncommutative residue trace 
of Guillemin~\cite{Gu:NPWF} and Wodzicki~\cite{Wo:LISA}. Together with~(\ref{eq:Intro.CC-character-index}) this provides us with the local index formula 
in noncommutative geometry. In the case of a Dirac spectral triple this enables us to recover the local index 
formula of Atiyah-Singer~\cite{AS:IEO1, AS:IEO3} (see \cite{CM:GAFA95, Po:CMP1}). 

Given an arbitrary group of diffeomorphisms $G$ of a manifold $M$, there is a technical difficulty for constructing a spectral triple over the crossed-product algebra $C^\infty(M)\rtimes G$ due to the lack of geometric structures that are invariant under the full diffeomorphism group (at the notable exception of the manifold structure itself). In particular, there need not exist a $G$-invariant metric. 
As observed by Connes~\cite{Co:Kyoto83}, up to a Connes-Thom isomorphism, the problem can be solved by passing to the total space of the metric bundle $P\rightarrow M$. Following up this idea Connes-Moscovici~\cite{CM:GAFA95} built a spectral triple over the algebra $C^\infty_c(P)\rtimes G$. The computation of the CM cocycle of that spectral triple involves Hopf cyclic cohomology and expresses the Connes-Chern character in terms of Gelf'and-Fuchs cohomology classes (see~\cite{CM:CMP98, CM:EM01}). 

As observed by Connes-Moscovici~\cite{CM:TGNTQF},  when $G$ is a group of conformal diffeomorphisms, we actually can have a spectral triple over $C^\infty(M)\rtimes G$ instead of $C^\infty_c(P)\rtimes G$ at the expense of twisting the definition of a spectral triple. More precisely, a \emph{twisted} 
spectral $(\cA,\cH,D)_{\sigma}$ is like an ordinary spectral triple at the exception that the boundedness of commutators $[D,a]$, $a \in \cA$, is replaced by that 
of twisted commutators, 
\begin{equation*}
   [D,a]_{\sigma}:=Da-\sigma(a)D, \qquad a \in \cA,  
\end{equation*}
where $\sigma$ is a given automorphism of the algebra $\cA$. A natural example is given by conformal deformations of ordinary spectral triples, 
\begin{equation*}
    (\cA,\cH,D) \longrightarrow (\cA,\cH,kDk)_{\sigma}, \qquad \sigma(a)=k^{2}ak^{-2},
\end{equation*}where $k$ ranges over positive invertible elements of $\cA$ (see~\cite{CM:TGNTQF}). Another example due to Connes-Moscovici~\cite{CM:TGNTQF} is 
the conformal Dirac spectral triple $(C^{\infty}(M)\rtimes G, L^{2}_{g}(M,\sS),\sD_{g})_{\sigma_{g}}$ associated with the 
Dirac operator $\sD_{g}$ on a compact spin Riemannian manifold $(M,g)$ and a group $G$ of diffeomorphisms preserving a given 
conformal structure $\sC$ (see also Section~\ref{sec:Conformal-CC-character}). 

As for ordinary spectral triples, the datum of a twisted spectral triple $(\cA,\cH,D)_{\sigma}$ gives rise to an index 
problem by twisting the operator $D$ with $\sigma$-connections (see~\cite{PW:KJM16}). The resulting Fredholm indices are 
computed by a Connes-Chern character $\Ch(D)_{\sigma}$ defined as a class in the (ordinary) cyclic cohomology of $\cA$ 
(see~\cite{CM:TGNTQF, PW:KJM16}). However, although Moscovici~\cite{Mo:LIFTST} produced an Ansatz for a version of the CM cocycle for 
twisted spectral triples, this Ansatz has been verified only in a special class of examples (see~\cite{Mo:LIFTST}). In 
particular, to date it is still not known if Moscovici's Asantz even holds for conformal deformations of ordinary 
spectral triples.    

For our purpose it is important to define the Connes-Chern character in 
the cyclic cohomology of \emph{continuous} cochains, which is smaller than the cyclic cohomology of arbitrary 
cochains. We show that for a natural class of twisted spectral triples 
$(\cA,\cH,D)_{\sigma}$ over locally convex algebras, which we call \emph{smooth} twisted spectral triples, the Connes-Chern 
character descends to a class $\bCh(D)_{\sigma}\in \op{\mathbf{HP}}^{0}(\cA)$, where $\op{\mathbf{HP}}^{\bt}(\cA)$ is the periodic cyclic 
cohomology of continuous cochains (Proposition~\ref{prop:CCC-index-formula2}). We also show the invariance of this class under 
conformal perturbations of twisted spectral triples (Proposition~\ref{prop:conf.Dirac.same.Ch.LCA}). 

The construction of the conformal Dirac spectral triple $(C^{\infty}(M)\rtimes G, L^{2}_{g}(M,\sS),\sD_{g})_{\sigma_{g}}$ 
associated with a conformal class $\sC$ on a compact spin manifold $M$ alluded to above \emph{a priori} depends on a choice of the metric 
$g\in \sC$. As it turns out, up to equivalences of twisted spectral triples, changing a metric within $\sC$ 
only amounts to make a conformal deformation of twisted spectral triples (see~\cite{Mo:LIFTST}). Combining this with the invariance under conformal 
deformations of the Connes-Chern character mentioned above then shows that the Connes-Chern character 
$\bCh(\sD_{g})_{\sigma_{g}}\in \op{\mathbf{HP}}^{0}(C^{\infty}(M)\rtimes G)$ is an invariant of the conformal class $\sC$ (Theorem~\ref{thm:CC.conf.inv}). 

The next step is the computation of the Connes-Chern character $\bCh(\sD_{g})_{\sigma_{g}}$. As mentioned above, we do not know if Moscovici's Ansatz holds for the conformal Dirac spectral triple $(C^{\infty}(M)\rtimes G, L^{2}_{g}(M,\sS),\sD_{g})_{\sigma_{g}}$. Therefore, we cannot make use of the CM cocycle given by that Ansatz. However, we can observe that the conformal invariance of $\bCh(\sD_{g})_{\sigma_{g}}$ allows us to use \emph{any} metric in the conformal class $\sC$. In particular, we may use a $G$-invariant metric. This is a convenient choice of metric, since in that case the automorphism $\sigma_g$ is the identity, and so the associated conformal Dirac spectral triple becomes an \emph{ordinary} spectral triple. The existence of such metrics is ensured by Ferrand-Obata theorem, provided that the conformal structure $\sC$ is non-flat,  i.e., it is not equivalent to the conformal structure of the round sphere. 

Assuming the non-flatness of $\sC$ we are reduced to the computation of the Connes-Chern character of an ordinary equivariant Dirac spectral triple $(C^{\infty}(M)\rtimes 
G, L^{2}_{g}(M,\sS), \sD_{g})$, where $G$ is a group of isometries. In this case, the Connes-Chern character is computed in~\cite{PW:JNCG16} by making use of a CM cocycle representative and by computing this cocycle by means of heat kernel techniques. This allows us to express the Connes-Chern  character $\bCh(\sD_{g})_{\sigma_{g}}$ in terms of explicit polynomials in curvatures and normal curvatures of the fixed-point manifolds of diffeomorphisms in $G$ (see Theorem~\ref{thm:LIF-Conformal-Geometry} for for the precise statement). These formulas are reminiscent of the local equivariant index theorem for Dirac operators of Atiyah-Segal-Singer~\cite{AS:IEO2,AS:IEO3}. This provides us with a reformulation of the local index formula in conformal-diffeomorphism invariant geometry. 

As mentioned above, pairing the Connes-Chern character $\Ch(\sD_g)_{\sigma_g}$ with any (even) periodic cyclic cycle  over the algebra $\cA_G:=C^\infty(M)\rtimes G$ produces an invariant of the conformal class $\sC$. This potentially produces a large family of conformal invariants. To understand and compute these invariants we need an explicit construction of (periodic) cyclic cycles over $\cA_G$. There is a large amount of work on the cyclic homology of crossed-product algebras, especially in the case of group actions on manifolds or varieties (see, e.g., \cite{BC:CCDG, Br:AIF87, Br:Preprint87, BN:KT94, BDN:AIM17, Co:Kyoto83, Co:NCG, Cr:KT99, FT:LNM87, GJ:Crelle93, NPPT:Crelle06, Ni:InvM90, Po:CRAS4, Po:CRAS5}). In his seminal work on cyclic cohomology and foliations in the early 80s, Connes~\cite{Co:Kyoto83} constructed an explicit cochain map and quasi-isomorphism from the equivariant cohomology into the homogeneous component of the periodic cyclic cohomology of $\cA_G$. However, since then it was not until the recent notes~\cite{Po:CRAS4, Po:CRAS5} that further explicit quasi-isomorphisms were exhibited. 

We refer to Section~\ref{sec:cyclic-homology-crossed-product} for a review of the results of~\cite{Po:CRAS4, Po:CRAS5} in the case of group actions on manifolds. This involves a large amount of homological algebra. Combining these results with the explicit computation of the Connes-Chern character enables us to define and compute explicitly a whole family of geometric conformal invariants (Theorem~\ref{thm:Confinv.main}). These conformal invariants are associated with conformal diffeomorphisms and mixed equivariant cycles. The mixed equivariant homology was introduced in~\cite{Po:CRAS5}. It mixes group homology and de Rham homology. It is also the natural receptacle for the cap product of group homology with equivariant cohomology. As a result, we obtain conformal invariants associated with equivariant characteristic classes (Theorem~\ref{thm:Confinv.characteristic-classes}). We also relate these conformal invariants to the conformal indices of Branson-{\O}rsted~\cite{BO:CM86, BO:DGA91} (see end of Section~\ref{sec:Conformal-Invariants}).

The paper is organized as follows. In Section~\ref{sec:TwistedST}, we review the main definitions and examples regarding twisted 
spectral triples and the construction of their index maps. In Section~\ref{sec:CyclicCohomChernChar}, we review the main facts about cyclic 
cohomology, cyclic homology and the Chern character in cyclic homology. In Section~\ref{sec:CC-character}, we review the construction 
of the Connes-Chern character of a twisted spectral triple. In Section~\ref{sec:smooth-twisted-ST}, we show that for smooth twisted spectral 
triples the Connes-Chern character descends to the cyclic cohomology of continuous cochains. In Section~\ref{sec:Invariance-CC-character}, we 
establish the invariance of the Connes-Chern character under conformal deformations. In Section~\ref{sec:Conformal-CC-character}, after reviewing 
the construction of the conformal Dirac spectral triple, we prove that its Connes-Chern Character is a conformal invariant. In 
Section~\ref{sec:LIF-Conformal-Geometry}, we compute this Connes-Chern character by using the results of~\cite{PW:JNCG16}. This provides us with a local 
index formula in conformal-diffeomorphism invariant geometry. In Section~\ref{sec:cyclic-homology-crossed-product}, we review the results of~\cite{Po:CRAS4, Po:CRAS5} on the construction of explicit quasi-isomorphisms computing the cyclic homology of crossed-product algebras in the case of group actions on manifolds. 
Finally, in Section~\ref{sec:Conformal-Invariants} we combine the results of the previous sections to construct and compute our conformal invariants. 

Throughout this paper we focus exclusively on the even-dimensional case. The main results of this paper have analogues in odd dimension. However, a complete treatment would require a comprehensive account on the index theory for odd twisted spectral triples. This will be dealt with in forthcoming papers. 
In addition, the conformal invariants that we construct in this paper are associated with conformal diffeomorphisms of finite order (i.e., periodic diffeomorphisms). It is also possible to define and compute conformal invariants associated with non-periodic conformal diffeomorphisms. We also postpone this to a forthcoming paper.
Finally, the non-flatness leaves out the important case of the action of the projective Poincar\'e group $\op{PO}(n,1)$ on the round sphere $\mathbb{S}^n$. The main issue is the computation of the Connes-Chern character in this case. As pointed out in~\cite{Mo:LIFTST} this requires a refinement of the analytical considerations of~\cite{PW:JNCG16} in order to take into account the contribution of the non-elliptic components of $\op{PO}(n,1)$. 

\section*{Acknowledgements}
The authors would like to thank Alain Connes, Sasha Gorokhovsky, Xiaonan Ma, Henri Moscovici, Bent {\O}rsted, Hessel Posthuma, Xiang Tang and Bai-Ling Wang for helpful discussions related to the subject matter of this paper. They also would like to thank the following institutions for their hospitality during the 
preparation of this manuscript: Seoul National University (HW); University of Adelaide, University of California at 
Berkeley, Kyoto University (Research Institute of Mathematical Sciences and  Department of Mathematics), University Paris-Diderot (Paris 7), and Mathematical 
Science Center of Tsinghua University (RP); Australian National University, Chern 
Institute of Mathematics of  Nankai University, and Fudan University (RP+HW).

\section{Index Theory for Twisted Spectral Triples.}\label{sec:TwistedST}
In this section, we recall how the datum of a twisted spectral triple naturally gives rise to an index 
problem~(\cite{CM:TGNTQF, PW:KJM16}). The exposition closely follows that of~\cite{PW:KJM16} (see also~\cite{Mo:EIPDNCG} 
for the case of ordinary spectral triples). 

In the setting of noncommutative geometry, the role of manifolds is played by spectral triples.  

\begin{definition}
A spectral triple $(\cA, \cH, D)$ is given by 
\begin{enumerate}
\item A $\Z_2$-graded Hilbert space $\mathcal{H}=\mathcal{H}^+\oplus \mathcal{H}^-$.
\item A unital $*$-algebra $\mathcal{A}$ represented by bounded operators on $\cH$ preserving its $\Z_{2}$-grading.
\item A selfadjoint unbounded operator $D$ on $\mathcal{H}$ such that
\begin{enumerate}
    \item $D$ maps $\dom (D)\cap \cH^{\pm}$ to $\cH^{\mp}$. 
    \item The resolvent $(D+i)^{-1}$ is a compact operator.
    \item $a \dom (D) \subset \dom (D)$ and $[D, a]$ is bounded for all $a \in \cA$. 
\end{enumerate}
\end{enumerate}   
\end{definition}

\begin{remark}
    The condition (3)(a) implies that with respect to the splitting $\mathcal{H}=\mathcal{H}^+\oplus \mathcal{H}^-$ the 
    operator $D$ takes the form,
    \begin{equation*}
        D=
        \begin{pmatrix}
            0 & D^{-} \\
            D^{+} & 0
        \end{pmatrix}, \qquad D^{\pm}:\dom(D)\cap \cH^{\pm}\rightarrow \cH^{\mp}.
    \end{equation*}
\end{remark}

\begin{example}\label{ex:TwistedST.Dirac-ST}
The paradigm of a spectral triple is given by a Dirac spectral triple, 
\begin{equation*}
( C^{\infty}(M), L^{2}_{g}(M,\sS), \sD_{g}),
\end{equation*}
where $(M^{n},g)$ is a compact spin Riemannian manifold of even dimension $n$ and $\sD_{g}$ is its Dirac operator 
acting on the spinor bundle $\sS=\sS^{+}\oplus \sS^{-}$.     
\end{example}

The definition of a twisted spectral triple is  similar to that of an ordinary spectral triple, 
except for some ``twist'' given by the conditions (3) and (4)(b) below.

 \begin{definition}[\cite{CM:TGNTQF}]\label{TwistedSpectralTriple}
A twisted spectral triple $(\cA, \cH, D)_{\sigma}$ is given by 
\begin{enumerate}
\item A $\Z_2$-graded Hilbert space $\mathcal{H}=\mathcal{H}^{+}\oplus \mathcal{H}^{-}$.
\item A unital $*$-algebra $\mathcal{A}$ represented by even bounded operators on $\cH$.

\item An automorphism $\sigma:\cA\rightarrow \cA$ such that $\sigma(a)^{*}=\sigma^{-1}(a^{*})$ for all $a\in 
\cA$. 
\item An odd selfadjoint unbounded operator $D$ on $\mathcal{H}$ such that 
\begin{enumerate}
    \item The resolvent $(D+i)^{-1}$ is compact.
    \item $a (\dom D) \subset \dom D$ and $[D, a]_{\sigma}$ is bounded for all $a \in \cA$, where the twisted 
    commutator $[D,a]_{\sigma}$ is defined by
   \begin{equation}
       [D, a]_{\sigma}:=Da-\sigma(a)D \qquad \forall a \in \cA.
       \label{eq:TwistedST.twisted-commutators}
   \end{equation}
\end{enumerate}
\end{enumerate}   
\end{definition}

The relevance of the notion of twisted spectral triples in the setting of conformal geometry stems from the following 
observation.  Let $( C^{\infty}(M), L^{2}_{g}(M,\sS), \sD_{g})$ be a Dirac spectral triple as in Example~\ref{ex:TwistedST.Dirac-ST}, and 
consider a conformal change of metrics, 
\begin{equation*}
    \hat{g}=k^{-2}g, \qquad k\in C^{\infty}(M), \ k>0.
\end{equation*}We then can form a Dirac spectral triple $(C^{\infty}(M), L^{2}_{\hat{g}}(M,\sS), \sD_{\hat{g}})$ 
associated with the new metric $\hat{g}$. 
As it turns out (see~\cite{PW:KJM16}) this spectral triple is equivalent to the following spectral triple, 
\begin{equation*}
   \left ( C^{\infty}(M), L^{2}_{g}(M,\sS), \sqrt{k}\;\sD_{g}\!\sqrt{k}\right).  
\end{equation*}
We note that the above spectral triple continues to make sense if we only assume $k$ to be a positive Lipschitz 
function on $M$. 

More generally, let $(\cA,\cH,D)$ be an ordinary spectral triple and $k$ a positive element of $\cA$. If we replace $D$ by its 
\emph{conformal deformation} $kDk$ then, when $\cA$ is noncommutative, the triple $(\cA,\cH,kD{k})$ needs not be an ordinary spectral triple. However, as the 
following result shows, it always gives rise to a twisted spectral triple. 

\begin{proposition}[\cite{CM:TGNTQF}] Let $\sigma:\cA\rightarrow \cA$ be the automorphism defined by
    \begin{equation}
        \sigma(a):= k^{2}a k^{-2} \qquad \forall a \in \cA.
        \label{eq:TwistedST.sigmah}
    \end{equation}Then $\left(\mathcal{A}, \mathcal{H}, kDk\right)_{\sigma}$ is a twisted spectral triple. 
\end{proposition}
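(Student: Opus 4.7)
My plan is to verify the four conditions in Definition~\ref{TwistedSpectralTriple} for $(\cA,\cH,kDk)_{\sigma}$. The conceptual core of the argument is the algebraic identity
\begin{equation*}
[kDk,a]_{\sigma} \;=\; k\,[D,\tau(a)]\,k, \qquad\text{where}\qquad \tau(a):=kak^{-1},
\end{equation*}
which reduces the twisted commutator structure of $(\cA,\cH,kDk)_{\sigma}$ to the ordinary commutator structure of the original triple $(\cA,\cH,D)$. Since $\sigma=\tau^{2}$, one can think of $\tau$ as a ``half-twist'' bridging the two structures. Note that for $\sigma$ to be an automorphism of $\cA$ in~(\ref{eq:TwistedST.sigmah}) we implicitly need $k$ to be invertible with $k^{-1}\in\cA$, and this is what makes $\tau$ itself an automorphism of $\cA$.

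For condition~(3), $\sigma(a)=k^{2}ak^{-2}$ is manifestly an algebra automorphism, and from $k^{*}=k$ one immediately obtains
\begin{equation*}
\sigma(a)^{*}=(k^{2}ak^{-2})^{*}=k^{-2}a^{*}k^{2}=\sigma^{-1}(a^{*}).
\end{equation*}
For condition~(4), odd parity of $kDk$ follows from the fact that $k$ is represented evenly (as an element of $\cA$) and $D$ is odd. Selfadjointness of $kDk$ on $k^{-1}\dom(D)$ is the standard statement that conjugation by a bounded invertible selfadjoint operator preserves selfadjointness. For the compact resolvent condition~(4)(a), the bijection $\xi\mapsto k\xi$ is a bi-Lipschitz equivalence between the graph norms of $kDk$ and $D$; composing with the compact embedding $(\dom(D),\|\cdot\|_{D})\hookrightarrow\cH$ yields the compact embedding $(\dom(kDk),\|\cdot\|_{kDk})\hookrightarrow\cH$, which is equivalent to compactness of $(kDk+i)^{-1}$.

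It remains to check condition~(4)(b): domain preservation and boundedness of $[kDk,a]_{\sigma}$, both of which follow from the boxed identity. For $\xi\in\dom(kDk)$ we have $k\xi\in\dom(D)$, and the relation $ka\xi=\tau(a)(k\xi)$ together with $\tau(a)\in\cA$ (here $k^{-1}\in\cA$ is used) shows that $ka\xi\in\dom(D)$, i.e., $a\xi\in\dom(kDk)$. Moreover, using $\sigma(a)k=k^{2}ak^{-1}=k\tau(a)$, one computes
\begin{equation*}
[kDk,a]_{\sigma}\xi \;=\; kD\tau(a)(k\xi)-k\tau(a)D(k\xi) \;=\; k\,[D,\tau(a)]\,k\xi,
\end{equation*}
and since $\tau(a)\in\cA$, the ordinary commutator $[D,\tau(a)]$ extends to a bounded operator by the spectral-triple axiom for $(\cA,\cH,D)$; sandwiching by the bounded operator $k$ preserves boundedness. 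The only real difficulty in this proof is spotting the intermediate half-twist $\tau$; once it is identified, all the remaining verifications are straightforward bookkeeping about domains, parities, and bounded perturbations.
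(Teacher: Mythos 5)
Your proof is correct, and your key identity $[kDk,a]_{\sigma}=k[D,\tau(a)]k$ with $\tau(a)=kak^{-1}$ is precisely the computation the paper gives (in the more general form $[kDk,a]_{\hat\sigma}=k[D,a]_{\sigma}k$) when it proves the analogous statement for conformal deformations of twisted spectral triples in Section~5; the paper cites \cite{CM:TGNTQF} for this particular proposition rather than proving it, but the argument is the same. You go a bit further than the paper by also spelling out the verification of odd parity, selfadjointness, and compact resolvent, which the paper treats as routine.
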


\begin{remark}
A more elaborate version of the above example, and the main focus of this paper, is the conformal Dirac spectral triple 
of Connes-Moscovici~\cite{CM:TGNTQF}. This is a twisted spectral triple taking into account of the action of 
the group of diffeomorphisms preserving a  given conformal structure. We refer to Section~\ref{sec:ConformalDiracST} 
for a review of this example.  
\end{remark}

\begin{remark}
 We refer to~\cite{CM:TGNTQF, GMT:T3sSTQSS, CT:GBTNC2T, IM:CPEST, Mo:LIFTST, PW:AIM15} for the constructions of 
various other examples of twisted spectral triples.
\end{remark}

In what folllows, we let $(\cA, \cH, D)_{\sigma}$ be a twisted spectral triple. In addition, we let $\cE$ be a finitely generated projective right module over $\cA$, i.e., 
 $\cE$ is a direct summand of a free module $\cE_{0}\simeq \cA^{N}$.

\begin{definition}[\cite{PW:KJM16}]
\label{def:sigmaTranslate}
A $\sigma$-translation of  $\cE$ is given by a pair $(\cE, \sigma^{\cE})$, 
 equipped with the following data:
\begin{itemize}
    \item[(i)]  $\cE^{\sigma}$ is a finitely generated projective right module over $\cA$. 

    \item[(ii)] $\sigma^{\cE}$ is a $\C$-linear isormophism from $\cE$ onto $\cE^{\sigma}$ such that 
\begin{equation}
    \sigma^{\cE}(\xi a)=\sigma^{\cE}(\xi)\sigma(a)  \qquad \text{for all $\xi\in \cE$ and $a\in \cA$}.
    \label{eq:sigma^E(xi a)}
\end{equation}    
\end{itemize}
\end{definition}

\begin{remark}
    When $\sigma=\op{id}$ the condition~(\ref{eq:sigma^E(xi a)}) simply means that $\sigma^{\cE}$ is a right-module 
    isomorphism, and so in this case we can take $(\cE, \op{id})$ as a $\sigma$-translation of $\cE$. 
\end{remark}

\begin{remark}
   Suppose that $\cE=e\cA^{N}$ with 
    $e=e^{2}\in M_{N}(\cA)$, $N\geq 1$.  Note that the automorphism $\sigma$ lifts to a $\C$-linear isomorphism $\sigma:\cA^{N}\rightarrow 
   \cA^{N}$ given by 
   \begin{equation*}
       \sigma(\xi)=(\sigma(\xi_{j})) \qquad \text{for all $\xi=(\xi_{j})\in \cA^{N}$}.
   \end{equation*}
   This induces a $\C$-linear isomorphism $\sigma_{e}:e\cA^{N}\rightarrow \sigma(e)\cA^{N}$ 
   satisfying~(\ref{eq:sigma^E(xi a)}). Therefore, in this case we may take $(\sigma(e)\cA^{N},\sigma_{e})$ as 
   a $\sigma$-translation of $\cE=e\cA^{N}$.  
\end{remark}

Throughout the rest of the section we let $(\cE^{\sigma},\sigma^{\cE})$ be a $\sigma$-translation of $\cE$. In addition, we consider the 
space of twisted 1-forms, 
 \begin{equation*}
    \Omega^{1}_{D,\sigma}(\cA)=\left\{\Sigma a^{i}[D, b^{i}]_{\sigma}: a^{i}, b^{i} \in\cA \right\},
\end{equation*}where $[D ,b]_{\sigma}$, $b \in \cA$, is the twisted 
commutator~(\ref{eq:TwistedST.twisted-commutators}). We note that $ \Omega^{1}_{D,\sigma}(\cA)$ is a subspace of 
$\cL(\cH)$. 
The \emph{twisted} differential $d_{\sigma}:\cA \rightarrow \Omega^{1}_{D,\sigma}(\cA)$ is given by 
\begin{equation}
\label{eq:TwistedDifferential}
    d_{\sigma}a:= [D,a]_{\sigma} \qquad \forall a \in \cA.
\end{equation}
This is a $\sigma$-derivation, in the sense that
\begin{equation}
\label{eq:SigmaDerivation}
    d_{\sigma}(ab)=(d_{\sigma}a)b+\sigma(a)d_{\sigma}b\qquad \forall a, b \in \cA. 
\end{equation}
In particular, this implies that $  \Omega^{1}_{D,\sigma}(\cA)$ is a sub-$(\cA,\cA)$-bimodule of $\cL(\cH)$. 

\begin{definition}
\label{def:SigmaConnection}
A $\sigma$-connection on $\cE$ is a $\C$-linear map $\nabla^{\cE}: \cE\rightarrow 
\cE^{\sigma}\otimes_{\cA}\Omega^1_{D,\sigma}(\cA)$ such that 
\begin{equation}
\label{eq:SigmaConnectionModuleMulti}
 \nabla^{\cE}(\xi a)=(\nabla^{\cE}\xi) a+\sigma^{\cE}(\xi)\otimes d_{\sigma}a \qquad \forall \xi\in\cE \ \forall a\in\cA.   
\end{equation}    
 \end{definition}

\begin{example}
 Suppose that $\cE=e\cA^{N}$ with $e=e^{2}\in M_{N}(\cA)$. Then a natural $\sigma$-connection on $\cE$ is the \emph{Grassmannian $\sigma$-connection} $\nabla_{0}^{\cE}$ 
 defined by 
 \begin{equation}
 \label{eq:GrassmannianSigmaConnection}
     \nabla_{0}^{\cE}\xi= \sigma(e)(d_{\sigma}\xi_{j}) \qquad \text{for all $\xi=(\xi_{j})$ in $\cE$}.
 \end{equation}    
\end{example}

\begin{definition}
     A Hermitian metric on $\cE$ is a map $\acoup{\cdot}{\cdot}:\cE\times \cE \rightarrow \cA$ such that
     \begin{enumerate}
        \item $\acoup{\cdot}{\cdot}$ is $\cA$-sesquilinear, i.e., it is $\cA$-antilinear with respect to the first 
        variable and $\cA$-linear with respect to the second variable. 
        
         \item  $(\cdot, \cdot)$ is positive, i.e., $\acoup{\xi}{\xi}\geq 0$ for all $\xi \in \cE$.
     
         \item   $(\cdot, \cdot)$ is nondegenerate, i.e., $\xi \rightarrow \acoup{\xi}{\cdot}$ 
         is an $\cA$-antilinear isomorphism from $\cE$ onto its $\cA$-dual $\Hom_{\cA}(\cE,\cA)$. 
     \end{enumerate}
 \end{definition}

 \begin{example}\label{ex:CanonicalHermitianStructure}
  The canonical Hermitian structure on the free module $\cA^{N}$ is given by
  \begin{equation}
  \label{eq:CanonicalHermitianStructure}
      \acoup{\xi}{\eta}_{0}=\xi_{1}^{*}\eta_{1}+\cdots + \xi_{q}^{*}\eta_{q} \qquad \text{for all $\xi=(\xi_{j})$ and 
      $\eta=(\eta_{j})$ in $\cA^{N}$}.
  \end{equation}
  It induces a Hermitian metric on any direct summand $\cE=e\cA^{N}$, $e=e^{2}\in M_{N}(\cA)$, (see, e.g.,~\cite{PW:KJM16}).
\end{example}

From now on we assume that $\cE$ and its $\sigma$-translation $\cE^{\sigma}$ carry Hermitian metrics.  We denote by $\cH(\cE)$ the pre-Hilbert space 
consisting of $\cE\otimes_\cA \cH$ equipped with the Hermitian inner product, 
 \begin{equation}
 \label{eq:HermitianInnerProductH(E)}
     \acou{\xi_{1}\otimes \zeta_{1}}{\xi_{2}\otimes \zeta_{2}}:= \acou{\zeta_{1}}{(\xi_{1},\xi_{2})\zeta_{2}}, \qquad 
     \xi_{j}\in \cE,  \zeta_{j} \in \cH, 
 \end{equation}where $\acoup{\cdot}{\cdot}$ is the Hermitian metric of $\cE$. It can be shown that $\cH(\cE)$ is actually a 
 Hilbert space and its topology is independent of the choice of the Hermitian inner product of $\cE$ (see, e.g.,~\cite{PW:KJM16}). 
 We also note there is a natural $\Z_{2}$-grading on $\cH(\cE)$ given by
 \begin{equation}
 \label{eq:Z_2GradingH(E)}
     \cH(\cE)=\cH^{+}(\cE)\oplus \cH^{-}(\cE), \qquad \cH^{\pm}(\cE):=\cE\otimes_{\cA}\cH^{\pm}.
 \end{equation}
We denote by $\cH(\cE^{\sigma})$ the similar $\Z_{2}$-graded Hilbert space associated with $\cE^{\sigma}$ and its 
 Hermitian metric. 
 
 Let $\nabla^{\cE}$ be a $\sigma$-connection on $\cE$. 
 Regarding $\Omega_{D,\sigma}^{1}(\cA)$ as a subalgebra of 
$\cL(\cH)$ we have a natural left-action $c:\Omega_{D,\sigma}^{1}(\cA)\otimes_{\cA}\cH \rightarrow \cH$ given by 
\begin{equation*}
    c(\omega\otimes \zeta)=\omega(\zeta) \qquad \text{for all $\omega\in \Omega_{D,\sigma}^{1}(\cA)$ and $\zeta \in 
    \cH$}.
\end{equation*}
We then denote by $c\left(\nabla^{\cE}\right)$ the composition $(1_{\cE^{\sigma}}\otimes c)\circ (\nabla^{\cE}\otimes 
1_{\cH}):\cE\otimes\cH \rightarrow \cE^{\sigma}\otimes\cH$.  Thus, for $\xi\in \cE$ and $\zeta \in \cH$, and upon writing
 $\nabla^{\cE}\xi=\sum \xi_{\alpha} \otimes\omega_{\alpha}$ with $\xi_{\alpha}\in \cE^{\sigma}$ and 
 $\omega_{\alpha}\in  \Omega^{1}_{D,\sigma}(\cA)$, we have
 \begin{equation}
 \label{eq:CliffordNabla}
        c\left(\nabla^{\cE}\right)(\xi\otimes\zeta)=\sum \xi_{\alpha}\otimes\omega_{\alpha}(\zeta). 
\end{equation}

In what follows we regard the domain of $D$ as a left $\cA$-module, which is possible since the action of $\cA$ on 
$\cH$ preserves the domain $\dom D$. 

\begin{definition}
 \label{def:DNablaE}
   The operator $D_{\nabla^{\cE}}:\cE\otimes_{\cA} \dom (D) \rightarrow \cH(\cE^{\sigma})$ is  defined by
\begin{equation}
\label{eq:Index.Dnabla}
    D_{\nabla^{\cE}}(\xi\otimes \zeta):=\sigma^{\cE}(\xi)\otimes D\zeta + c(\nabla^{\cE})(\xi\otimes \zeta) \qquad 
    \text{for all $\xi\in \cE$ and $\zeta \in \dom D$}.
\end{equation}   
\end{definition}

\begin{remark} 
Although the operators $\sigma^{\cE}$, $D$ and $\nabla^{\cE}$ are not module maps, the operator is well defined as a 
linear map with domain $\cE\otimes_{\cA} \dom (D)$ (see~\cite{PW:KJM16}). 
\end{remark}

\begin{remark}
    With respect to the $\Z_2$-gradings~(\ref{eq:Z_2GradingH(E)}) for $\cH(\cE)$ and $\cH(\cE^{\sigma})$ the operator $D_{\nabla^{\cE}}$ takes the form, 
\begin{equation}
\label{eq:Index.Dnabla+-}
 D_{\nabla^{\cE}}=   \begin{pmatrix}
    0    & D_{\nabla^{\cE}}^{-} \\
        D_{\nabla^{\cE}}^{+} & 0
    \end{pmatrix}, \qquad D_{\nabla^{\cE}}^{\pm}:\cE\otimes_{\cA}\dom D^{\pm}\longrightarrow \cH^{\mp}(\cE^{\sigma}).
\end{equation}That is, $D_{\nabla^\cE}$ is an odd operator.
\end{remark}

\begin{example}[See \cite{PW:KJM16}]
\label{ex:D_grassDesigma}
Suppose that $\cE=e\cA^{N}$ with $e=e^{2}\in M_{N}(\cA)$ and let $\nabla_{0}^{\cE}$ be the Grassmanian $\sigma$-connection 
of $\cE$. Then up to the canonical unitary identifications $\cH(\cE)\simeq e\cH^{N}$ and $\cH(\cE^{\sigma})\simeq 
\sigma(e)\cH^{N}$ the operators $D_{\nabla^{\cE}_{0}}$ agrees with
\begin{equation*}
    \sigma(e)(D\otimes 1_{N}):e(\dom D)^{N}\longrightarrow \sigma(e)\cH^{N}. 
\end{equation*}
\end{example}

\begin{example}
In the case of a Dirac spectral triple $( C^{\infty}(M), L^{2}_{g}(M,\sS), \sD_{g})$, we may take $\cE$ to be the module 
$C^{\infty}(M,E)$ of smooth sections of a vector bundle $E$ over $M$. Any Hermitian metric and connection on $E$ give 
rise to a Hermitian metric and a connection $\nabla^{\cE}$ on $\cE$. Furthermore, if we set $\cH=L^{2}_{g}(M,E)$, then, under the natural identification $\cH(\cE)\simeq 
L^{2}(M,\sS\otimes E)$, the operator $(\sD_{g})_{\nabla^{\cE}}$ agrees with the usual twisted 
Dirac operator $\sD_{\nabla^{E}}$ as defined, e.g., in~\cite{BGV:HKDO}. 
\end{example}

\begin{proposition}[\cite{PW:KJM16}]
\label{lem:Dnabla.Fred}
    The operator $D_{\nabla^{\cE}}$ is closed and Fredholm. 
\end{proposition}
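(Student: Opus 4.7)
The plan is to reduce the statement to the Grassmannian model of Example~\ref{ex:D_grassDesigma} via bounded perturbations, and then deduce closedness and Fredholmness from the compact resolvent of $D$. First, I would use the fact that any finitely generated projective right $\cA$-module $\cE$ is isomorphic to some $e\cA^N$ with $e = e^2 \in M_N(\cA)$, and under this isomorphism any $\sigma$-connection $\nabla^{\cE}$ may be written as $\nabla_0^{\cE} + A$, where $\nabla_0^\cE$ is the Grassmannian $\sigma$-connection of~(\ref{eq:GrassmannianSigmaConnection}) and $A: \cE \to \cE^{\sigma} \otimes_{\cA} \Omega^1_{D,\sigma}(\cA)$ is right $\cA$-linear (the twisted Leibniz-type defects~(\ref{eq:SigmaConnectionModuleMulti}) cancel in the difference). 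Applying the Clifford-type action, the correction $c(A)$ extends to a \emph{bounded} operator $\cH(\cE) \to \cH(\cE^{\sigma})$, because all elements of $\Omega^1_{D,\sigma}(\cA)$ act as bounded operators on $\cH$ by the twisted spectral triple axiom. Since closedness and Fredholmness are both stable under bounded perturbations, it suffices to treat the case $\nabla^\cE = \nabla_0^\cE$.

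For this case, Example~\ref{ex:D_grassDesigma} identifies $D_{\nabla_0^{\cE}}$, up to the canonical unitary isomorphisms $\cH(\cE) \simeq e\cH^N$ and $\cH(\cE^{\sigma}) \simeq \sigma(e)\cH^N$, with the compression $\sigma(e)(D \otimes 1_N)$ defined on $e(\dom D)^N$. Closedness then follows from the selfadjointness of $D \otimes 1_N$ on $\cH^N$: the compression inherits closedness from the boundedness of the idempotents $e$ and $\sigma(e)$ together with the fact that $\cA$, and hence $M_N(\cA)$, preserves $\dom D$. For Fredholmness, since $D$ is selfadjoint with compact resolvent its spectrum is discrete with finite multiplicities, so $D$ and therefore $D \otimes 1_N$ are Fredholm. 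Passing to the compression, I would construct a parametrix for $\sigma(e)(D \otimes 1_N)|_{e(\dom D)^N}$ by compressing a parametrix of $D \otimes 1_N$: concretely, one uses that $(D + i)^{-1}$ is compact together with the fact that the twisted commutator $[D \otimes 1_N, e]_{\sigma} = (D \otimes 1_N)e - \sigma(e)(D \otimes 1_N)$ is bounded (from the twisted spectral triple axiom applied entrywise to $e \in M_N(\cA)$) to show that the compressed operator is invertible modulo compacts.

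The main technical obstacle is the management of the twisted commutator in the Fredholm argument: unlike in the untwisted case one cannot simply commute $e$ past $D$, and one must exploit the boundedness of $[D, e]_\sigma = De - \sigma(e)D$ to absorb the resulting cross terms into compact remainders. The bookkeeping of how $\sigma$ and $\sigma^{-1}$ enter (via $\sigma(a)^* = \sigma^{-1}(a^*)$) when taking adjoints and constructing the two-sided parametrix is the main novelty compared to the standard spectral triple case. All of these verifications are worked out in detail in~\cite{PW:JKT}, from which I would lift the arguments with only cosmetic changes.
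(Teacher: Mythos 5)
Your proposal does not have a counterpart in this paper to compare against line by line: Proposition~\ref{lem:Dnabla.Fred} is imported verbatim from~\cite{PW:JKT} with no proof given here, and your strategy (reduce to the Grassmannian $\sigma$-connection of~(\ref{eq:GrassmannianSigmaConnection}) via Example~\ref{ex:D_grassDesigma}, then build a parametrix from the compact resolvent of $D$ and the bounded twisted commutators $[D\otimes 1_{N},e]_{\sigma}$) is precisely the route of that reference, so the architecture is the intended one. There is, however, one genuinely incorrect step as written: the assertion that ``Fredholmness is stable under bounded perturbations.'' It is not --- already for bounded operators, the identity is Fredholm and perturbing it by minus the identity destroys Fredholmness --- so you cannot first pass from $\nabla^{\cE}=\nabla_{0}^{\cE}+A$ to the Grassmannian case on the strength of that principle and only afterwards produce the parametrix. (The closedness half of that reduction is fine, since adding the bounded operator $c(A)$ does not change the domain.)

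The repair is immediate and uses only what you already have, but the order of the argument must be reversed: first treat $D_{\nabla_{0}^{\cE}}=\sigma(e)(D\otimes 1_{N})|_{e(\dom D)^{N}}$ and observe that the parametrix you compress, say $Q=e\,(D\otimes 1_{N}+i)^{-1}\sigma(e)$ (or $e(D\otimes 1_{N})^{-1}\sigma(e)$ after passing to the invertible double), is itself a \emph{compact} operator, because $(D+i)^{-1}$ is compact; writing $\sigma(e)(D\otimes 1_{N})=(D\otimes 1_{N})e-[D\otimes 1_{N},e]_{\sigma}$ shows $QD_{\nabla_{0}^{\cE}}-1$ and $D_{\nabla_{0}^{\cE}}Q-1$ are compact. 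Then for an arbitrary $\sigma$-connection the \emph{same} $Q$ works, since $Q\bigl(D_{\nabla_{0}^{\cE}}+c(A)\bigr)-1=(QD_{\nabla_{0}^{\cE}}-1)+Qc(A)$ and $Qc(A)$ is compact (compact times bounded), and likewise on the other side; this is the correct sense in which the bounded perturbation is harmless here. A second, smaller point: closedness of the compression does not follow merely from selfadjointness of $D\otimes 1_{N}$ and boundedness of the idempotents; given $\xi_{n}\to\xi$ in $e\cH^{N}$ with $\sigma(e)(D\otimes 1_{N})\xi_{n}$ convergent, you need the identity $(D\otimes 1_{N})\xi_{n}=\sigma(e)(D\otimes 1_{N})\xi_{n}+[D\otimes 1_{N},e]_{\sigma}\xi_{n}$ and the boundedness of the twisted commutator to conclude that $(D\otimes 1_{N})\xi_{n}$ converges, after which closedness of $D$ finishes the argument. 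You invoke that boundedness only in the Fredholm part; it is needed in the closedness part as well. With these two adjustments the proof is correct and coincides with the argument of~\cite{PW:JKT}.
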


Note that the above result implies that the operators $D_{\nabla^{\cE}}^{\pm}$ in~(\ref{eq:Index.Dnabla+-}) are 
Fredholm.  This leads us to the following definition.

\begin{definition}
    The index of  the operator $D_{\nabla^{\cE}}$ is
\begin{equation*}
    \ind D_{\nabla^{\cE}}= \frac{1}{2}\left( \ind D_{\nabla^{\cE}}^{+}- \ind D_{\nabla^{\cE}}^{-}\right),
\end{equation*}where $\ind D_{\nabla^{\cE}}^{\pm}$ is the usual Fredholm index of $D_{\nabla^{\cE}}^{\pm}$ (i.e., 
$\ind D_{\nabla^{\cE}}^{\pm}= \dim \ker D_{\nabla^{\cE}}^{\pm}-\dim \coker D_{\nabla^{\cE}}^{\pm}$). 
\end{definition}

\begin{remark}\label{rmk:Twisted-ST.ribbon-condition}
 In general the indices $\pm \ind D_{\nabla^{\cE}}^{\pm}$ do not agree, so that it is natural take their mean to define the 
 index of $D_{\nabla^{\cE}}$. However, as shown in~\cite{PW:KJM16}, the index $\ind D_{\nabla^{\cE}}$ is an integer when the automorphism $\sigma$ is \emph{ribbon}, in the sense 
 there is another automorphism $\tau:\cA\rightarrow \cA$ such that $\sigma=\tau\circ \tau$ and $\tau(a)^{*}=\tau^{-1}(a^{*})$ for all $a\in 
\cA$. The ribbon condition is satisfied in all main examples of twisted spectral triples (see~\cite{PW:KJM16}). When this 
condition holds, it is shown in~\cite{PW:AIM15} that we always can endow $\cE$ with a ``$\sigma$-Hermitian 
structure'' (see~\cite{PW:AIM15} for the precise definition). In that case, for any  connection
$\nabla^{\cE}$ compatible with  the $\sigma$-Hermitian structure, 
\begin{equation*}
   \ind D_{\nabla^{\cE}}=\dim \ker  D_{\nabla^{\cE}}^{+}-\dim \ker  D_{\nabla^{\cE}}^{-} \in \Z.
\end{equation*}
The above formula is the generalization of the usual formula for the index of a Dirac operator twisted by a Hermitian connection on a 
Hermitian vector bundle.
\end{remark}

As it turns out the index $ \ind D_{\nabla^{\cE}}$ only depends on the $K$-theory class of $\cE$ (see~\cite{PW:KJM16}). More precisely, we 
have the following result. 

\begin{proposition}[\cite{CM:TGNTQF, PW:KJM16}]
\label{prop.IndexTwisted-connection}
There is a unique additive map $\ind_{D,\sigma}:K_{0}(\cA)\rightarrow \frac{1}{2}\Z$ such that, for any finitely 
generated projective module $\cE$ over $\cA$ and any $\sigma$-connection $\nabla^{\cE}$ on $\cE$, we have
    \begin{equation*}
        \ind_{D,\sigma}[\cE]= \ind D_{\nabla^{\cE}}. 
    \end{equation*}
\end{proposition}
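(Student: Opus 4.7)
The plan is to establish existence, additivity, and uniqueness by reducing everything to the Grassmannian $\sigma$-connection case, then invoking a homotopy argument to handle arbitrary $\sigma$-connections.

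First, I would fix a finitely generated projective module $\cE$ and show that $\ind D_{\nabla^{\cE}}$ is independent of the choice of $\sigma$-connection $\nabla^{\cE}$. The Leibniz rule~(\ref{eq:SigmaConnectionModuleMulti}) implies that the difference $\nabla_1 - \nabla_0$ of any two $\sigma$-connections is a right $\cA$-module map $\cE \rightarrow \cE^{\sigma}\otimes_{\cA}\Omega^{1}_{D,\sigma}(\cA)$. Under the identification $\cE = e\cA^N$, the image sits inside $\sigma(e)M_N(\Omega^{1}_{D,\sigma}(\cA))e$, consisting of bounded operators on $\cH$. Consequently, the linear path $\nabla_t = (1-t)\nabla_0 + t\nabla_1$ yields a norm-continuous family of bounded perturbations $D_{\nabla_t} - D_{\nabla_0}$ of the Fredholm operator $D_{\nabla_0}$. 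By Proposition~\ref{lem:Dnabla.Fred} each $D_{\nabla_t}$ is Fredholm, and by the homotopy invariance of the Fredholm index, $\ind D_{\nabla_t}^{\pm}$ is constant in $t$, hence so is $\ind D_{\nabla^{\cE}}$.

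Next, I would use the Grassmannian $\sigma$-connection to produce a canonical representative. Writing $\cE \simeq e\cA^N$ with $e = e^2 \in M_N(\cA)$ and taking $\cE^{\sigma} = \sigma(e)\cA^N$, Example~\ref{ex:D_grassDesigma} identifies $D_{\nabla_{0}^{\cE}}^{\pm}$ with $\sigma(e)(D^{\pm}\otimes 1_N) e$ acting between compressions of $\cH^N$. This gives an index depending only on the idempotent $e$. Independence of the isomorphism class of $\cE$ and of the choice of $\sigma$-translate then follows by conjugating the Fredholm operator by the relevant module isomorphisms, which produces a Fredholm operator with the same index. To define the map on all of $K_0(\cA)$, I would set $\ind_{D,\sigma}[\cE] := \ind D_{\nabla_0^{\cE}}$ on classes of finitely generated projective modules. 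Stabilization invariance (i.e., invariance under adding a free summand on which $D$ acts trivially) follows because the summand contributes a self-adjoint Fredholm operator with vanishing index.

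Additivity under direct sums is immediate: given $\cE = \cE_1 \oplus \cE_2$ with $\sigma$-translates and Grassmannian $\sigma$-connections splitting accordingly, the operator $D_{\nabla^{\cE}}$ becomes block diagonal, yielding $\ind D_{\nabla^{\cE}} = \ind D_{\nabla^{\cE_1}} + \ind D_{\nabla^{\cE_2}}$. Passing to the Grothendieck group then gives a well-defined additive map on $K_0(\cA)$ with values in $\tfrac{1}{2}\Z$, and uniqueness follows since $K_0(\cA)$ is generated by classes of finitely generated projective modules.

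The main obstacle I anticipate is the careful bookkeeping imposed by the twist: one must verify that changing the presentation $\cE \simeq e\cA^N$ or the $\sigma$-translate does not alter the index. The subtlety is that $\sigma^{\cE}$ is not $\cA$-linear, so the usual arguments for ordinary spectral triples must be adapted using the compatibility~(\ref{eq:sigma^E(xi a)}) to ensure that the unitary identifications $\cH(\cE)\simeq e\cH^N$ and $\cH(\cE^{\sigma})\simeq \sigma(e)\cH^N$ intertwine $D_{\nabla^{\cE}}$ with the compressed operator of Example~\ref{ex:D_grassDesigma}. Once this is in place, both the existence and uniqueness of $\ind_{D,\sigma}$ follow from the Fredholm-theoretic arguments above.
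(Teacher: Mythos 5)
The paper itself gives no proof of this proposition: it is quoted from \cite{CM:TGNTQF, PW:JKT}, and your argument follows essentially the route of the cited reference. Your main steps are all sound: the difference of two $\sigma$-connections (with the same $\sigma$-translate) is a right $\cA$-module map into $\cE^{\sigma}\otimes_{\cA}\Omega^{1}_{D,\sigma}(\cA)$, hence induces a bounded operator, so the affine path $\nabla_{t}$ consists of $\sigma$-connections and, by Proposition~\ref{lem:Dnabla.Fred} together with the stability of the index of closed Fredholm operators under norm-continuous bounded perturbations, $\ind D_{\nabla^{\cE}}$ is independent of the connection; reduction to the Grassmannian $\sigma$-connection of Example~\ref{ex:D_grassDesigma}, invariance under module isomorphisms and change of $\sigma$-translate by conjugating with the induced bounded invertibles, block-diagonal additivity, and the universal property of the Grothendieck group then give existence, additivity and uniqueness. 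Your closing remark about the need to track the non-$\cA$-linearity of $\sigma^{\cE}$ through the identifications $\cH(\cE)\simeq e\cH^{N}$, $\cH(\cE^{\sigma})\simeq \sigma(e)\cH^{N}$ is exactly the bookkeeping carried out in \cite{PW:JKT}.

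One assertion is inaccurate, although it does not damage the argument: a genuine free summand $\cA^{k}$ equipped with its Grassmannian $\sigma$-connection does \emph{not} contribute a vanishing index in general; its contribution is $\tfrac{k}{2}\left(\ind D^{+}-\ind D^{-}\right)=k\,\ind D^{+}$, which is nonzero, e.g., for a Dirac spectral triple with nonzero index. No separate ``stabilization invariance'' step is needed: once you know the index depends only on the isomorphism class of $\cE$ (with all auxiliary choices) and is additive under direct sums, the universal property of the Grothendieck group produces the map on $K_{0}(\cA)$, and for stably isomorphic modules the common free summand's index simply cancels, whatever its value. (If by ``a free summand on which $D$ acts trivially'' you meant padding the idempotent $e$ by zero blocks, the extra contribution is indeed zero and the point is moot.)
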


\section{Cyclic Homology and Cyclic Cohomology}
\label{sec:CyclicCohomChernChar}
Cyclic homology and cyclic cohomology were introduced by 
Connes~\cite{Co:Ober81, Co:CRAS83, Co:NCDG} as the relevant noncommutative analogue of de Rham theory (see also Tsygan~\cite{Ts:UMN83}). In this section, we review the main background on cyclic homology and cyclic cohomology that is needed for this paper. We refer to the monographs~\cite{Co:NCDG, Co:NCG, Lo:CH} for 
more comprehensive accounts on these topics. 

\subsection{Preamble: Mixed complexes, cyclic modules, and $S$-maps} In what follows we let $\K$ be an arbitrary unital ring. According to Burghelea~\cite{Bu:CM86} and Kassel~\cite{Ka:JAlg87} a mixed complex of $\K$-modules is given by a datum $(C_\bt,b,B)$, where $C_m$, $m\geq 0$, are (left) $\K$-modules and $b:C_\bt \rightarrow C_{\bt-1}$ and $B:C_\bt\rightarrow C_{\bt+1}$ are $\K$-module maps such that $b^2=B^2=bB+Bb=0$. The \emph{cyclic complex} of a mixed complex $C=(C_\bt,b,B)$ is the chain complex $C^\natural=(C^\natural_\bt, b+SB)$, where $C^\natural_m=C_m\oplus C_{m-2}\oplus \cdots$, $m\geq 0$, and $S:C_\bt^\natural \rightarrow C_{\bt-2}^\natural$ is the canonical projection obtained by factorizing out $C_m$. The homology of this chain complex  is called the \emph{cyclic homology} of the mixed complex $C$ and is denoted by $\HC_\bt(C)$. 

The operator $S:C_\bt^\natural \rightarrow C_{\bt-2}^{\natural}$ is called the \emph{periodicity operator}.  The \emph{periodic cyclic complex} is the $\Z_2$-graded chain complex $C^\sharp=(C^\sharp_\bt, b+B)$, where $C_i^\sharp = \prod_{q \geq 0}C_{2q+i}$, $i=0,1$. Equivalently, we have $C_\bt^\sharp=\varprojlim_S C_{2q+\bt}$, where $\varprojlim_S$ is the projective limit of the projective system defined by the operators 
$S:C_{2q+\bt}\rightarrow C_{2q+\bt-2}$, $q\geq 1$. The homology of the chain complex $C^\sharp$ is called  the \emph{periodic cyclic homology} of the mixed complex $C$. It is denoted by $\HP_\bt(C)$. 

Following Connes~\cite{Co:CRAS83}, a \emph{cyclic $\K$-module} is given by a datum $(C_\bt,d,s,t)$, where $C_m$, $m\geq 0$, are $\K$-modules and the $\K$-module maps $d:C_\bt\rightarrow C_{\bt-1}$, $s:C_\bt\rightarrow C_{\bt+1}$ and $t:C_\bt \rightarrow C_\bt$ define a simplicial module structure with faces $d_j=t^{j}dt^{-(j+1)}$ and degeneracies $s_j=t^{j+1}dt^{-(j+1)}$. In addition, the operator $t$ is required to be cyclic, in the sense that $t^{m+1}=1$ on $C_m$. Any cyclic $\K$-module $C=(C_\bt, d,s,t)$ gives rise to a mixed complex $C=(C_\bt, b,B)$, where $b=\sum_{j=0}^m(-1)^j d_j$ and $B=(1-\tau)sN$ on $C_m$ with $\tau =(-1)^m t$ and $N=1+\tau +\cdots + \tau^m$. We then define the cyclic  and periodic cyclic homologies of $C$ as the the cyclic  and periodic cyclic homologies of that mixed complex.

Given mixed complexes $C=(C_\bt, b,B)$ and $\wC=(C_\bt, b,B)$, an $S$-map in the sense of Kassel~\cite{Ka:JAlg87} is a chain map $f:C^\natural_\bt \rightarrow \wC^\natural_\bt$ which is compatible with the $S$-operator. Any  such map uniquely decomposes as $f=\sum f^{(j)} S^j$, where $f^{(j)}:C_\bt \rightarrow \wC_{\bt+2j}$ is a $\K$-module map of degree $2j$ such that $[b,f^{(0)}]=0$ and $[B,f^{(j)}]+[b,f^{(j+1)}]=0$ for $j\geq 0$. In particular, it uniquely extends to a chain map $f^\sharp: C^\sharp_\bt \rightarrow \wC_\bt^\sharp$ between the periodic cyclic complexes of $C$ and $\wC$. 

\subsection{Cyclic homology of algebras} From now on we let $\cA$ be a unital algebra over $\C$. Connes~\cite{Co:CRAS83} associated with any such algebra a cyclic space (i.e., a cyclic $\C$-module). This is $C(\cA)=(C_\bt(\cA),d,s,t)$, where $C_m(\cA)=\cA^{\otimes (m+1)}$, $m\geq 0$, and the structural operators $d:C_\bt(\cA)\rightarrow C_{\bt-1}(\cA)$, $s:C_\bt(\cA)\rightarrow C_{\bt+1}(\cA)$, and $t:C_\bt(\cA)\rightarrow C_{\bt}(\cA)$ are given by
\begin{gather}
d(a^{0}\otimes \cdots \otimes a^{m}) = (a^{m} a^0)\otimes a^1  \cdots \otimes a^{m-1}, 
\label{eq:cyclic.end-face-chain}  \\
 s(a^{0}\otimes \cdots \otimes a^{m}) = 1\otimes a^0  \cdots \otimes a^{m},
\label{eq:cyclic.extra-degeneracy-chain} \\
t(a^{0}\otimes \cdots \otimes a^{m}) = a^m\otimes a^0  \cdots \otimes a^{m-1}, \qquad a^j \in \cA. 
\label{eq:cyclic.cyclic-operator-chain}
\end{gather}

We get a mixed complex $(C_\bt(\cA),b,B)$, where the differential $b:C_\bt(\cA)\rightarrow C_{\bt-1}$ is the Hochschild boundary, 
 \begin{align*}
    b(a^{0}\otimes\cdots\otimes a^{m})  = & \sum_{j=0}^{m-1}(-1)^{j}a^{0}\otimes\cdots\otimes 
    a^{j}a^{j+1}\otimes\cdots\otimes a^{m}\\ &  + 
    (-1)^{m}a^{m}a^{0}\otimes\cdots\otimes a^{m-1}, \qquad a^{j}\in \cA.
\end{align*}
This gives rise to the cyclic complex $C^\natural(\cA)=(C^\natural_\bt(\cA), b+SB)$ and the periodic cyclic complex $C^\sharp(\cA)=(C^\sharp_\bt(\cA), b+B)$. Their respective homologies are called the \emph{cyclic homology} and \emph{periodic cyclic homology} of the algebra $\cA$. They are denoted by $\HC_\bt(\cA)$ and $\HP_\bt(\cA)$, respectively.

The mixed complex $(C_\bt(\cA),b,B)$ is normalized as follows. The degeneracies of the cyclic space $C(\cA)$ are $s_j=t^{j+1}st^{-(j+1)}$, $j=0,\ldots, m$. The space of degenerate $m$-chains is $D_m(\cA)=s_0(C_{m-1}(\cA))\oplus \cdots \oplus s_{m-1}(C_{m-1}(\cA))$. Here $s_j(a^0\otimes \cdots \otimes a^{m-1})=a^0 \otimes \cdots \otimes a^j \otimes 1 \otimes a^{j+1} \otimes \cdots \otimes a^{m-1}$, $a^l \in \cA$. Therefore, we see that $D_m(\cA)$ is spanned by elementary tensor products $a^0\otimes \cdots \otimes a^m$ where $a^j=1$ for some $j\in \{1,\ldots,m\}$. The differentials $b$ and $B$ preserve $D_\bt(\cA)$, and so the mixed complex $(C_\bt(\cA), b,B)$ descends to a \emph{normalized} mixed complex $\nC(\cA)=(\nC_\bt(\cA),b,B)$, with $\nC_\bt(\cA)=C_\bt(\cA)\slash D_\bt(\cA)$. We let $\nC^\natural(\cA)=(\nC^\natural_\bt(\cA), b+SB)$ and $\nC^\sharp(\cA)=(\nC^\sharp_\bt(\cA), b+B)$ be the corresponding cyclic and periodic cyclic complexes. 
The canonical projection $\pi: C_\bt(\cA) \rightarrow \nC_\bt(\cA)$ is a map of mixed complexes. At the level of Hochschild homology we obtain a quasi-isomorphism. General homological algebra arguments then imply that we also obtain quasi-isomorphisms at the level of cyclic homology and periodic cyclic homology. This enables us to describe $\HC_\bt(\cA)$ and $\HP_\bt(\cA)$ in terms of \emph{normalized} cycles. 

When $\cA$ is a locally convex algebra, the space of chains $C_m(\cA)$ is often completed into $\bC_m(\cA):=\cA^{\hotimes (m+1)}$, where $\hotimes$ is the projective topological tensor product~\cite{Gr:TTPNS, Tr:TVSDK}. In particular, given any locally convex space $X$, a linear map $\Phi:\bC_m(\cA)\rightarrow X$ is continuous iff the $(m+1)$-linear map $\cA^{m+1}\ni (a^0, \ldots, a^m)\rightarrow \Phi(a^0\otimes \cdots \otimes a^m)$ is jointly continuous. The structural operators $(d,s,t)$ in~(\ref{eq:cyclic.end-face-chain})--(\ref{eq:cyclic.cyclic-operator-chain}) uniquely extend to continuous operators on $\bC_\bt(\cA)$, so that we obtain a cyclic space  $\bC(\cA):=(\bC_\bt(\cA), d,s,t)$. 
In the same way as above, this cyclic space gives rise to cyclic and periodic cyclic complexes $\bC^\natural(\cA):=(\bC_\bt^\natural(\cA),b+SB)$ and $\bC^\sharp(\cA):=(\bC_\bt^\sharp(\cA),b+B)$. Their respective homologies are denoted by $\bHC_\bt(\cA)$ and $\bHP_\bt(\cA)$. 

\begin{example}
\label{ex:HKR.smooth.functions}
    Let $\cA=C^{\infty}(M)$, where $M$ is a closed manifold. We endow $\cA$ with its standard Fr\'echet algebra topology. In addition, we let $\Omega(M)=(\Omega^\bt(M),d)$ be the de Rham complex of (smooth) differential forms on $M$. We shall regard $\Omega(M)$ as a mixed complex $\Omega(M)=(\Omega^\bt(M), 0,d)$. The Hochschild-Kostant-Rosenberg map $\alpha: \Omega^\bt(M)\rightarrow \bC_\bt(\cA)$ is given by 
    \begin{equation}
    \label{eq:cyclic.HKR-map-chain}
        \alpha(f^{0}\otimes \cdots \otimes f^{m})=\frac{1}{m!}f^{0}df^{1}\wedge \cdots \wedge df^{m}, \qquad  f^{j}\in \cA. 
    \end{equation}
 This is a map of mixed complexes. Moreover, Connes~\cite{Co:NCDG} proved that this is a quasi-isomorphism, and so we can describe $\bHC_\bt(\cA)$ and $\bHP_\bt(\cA)$ in terms of de Rham cohomology $H^\bt(M,\C)$. Namely, 
\begin{gather*}
\bHC_m(\cA) \simeq \left( \Omega^m(M)\slash d(\Omega^{m-1}(M))\right) \oplus H^{m-2}(M,\C) \oplus \cdots, \quad m\geq 0,\\
\bHP_\bt(\cA)\simeq \bigoplus_{q\geq 0} H^{\ev/\odd}(M,\C), \quad \text{where}\ H^{\ev/\odd}(M,\C):= \bigoplus_{\text{$m$ even/odd}} H^m(M,\C). 
\end{gather*}
\end{example}

\subsection{Cyclic cohomology of algebras}
 Cyclic cohomology is the dual version of cyclic homology. There is a natural duality between $C_m(\cA)=\cA^{\otimes (m+1)}$ and the space $C^m(\cA)$ that consists of $(m+1)$-linear maps $\varphi: \cA^{m+1}\rightarrow \C$. We then get a ``cyclic co-space'' $(C^\bt(\cA),d,s,t)$, where the operators $d:C^\bt(\cA)\rightarrow C^{\bt+1}(\cA)$, $s:C^\bt(\cA)\rightarrow C^{\bt-1}(\cA)$, and $t:C^\bt(\cA)\rightarrow C^{\bt}(\cA)$ are given by  
\begin{gather}
 (d\varphi)(a^0,\ldots, a^{m+1})=  \varphi\left((a^{m+1} a^0),a^1, \ldots, a^{m}\right), 
\label{eq:cyclic.end-face-cochain}\\
 (s\varphi)(a^0,\ldots, a^{m-1}) = \varphi(1, a^0, \ldots, a^{m-1}) , 
\label{eq:cyclic.extra-degeneracy-cochain}\\
(t\varphi)(a^0,\ldots, a^m) = \varphi(a^m, a^0, \ldots, a^{m-1}), \qquad \varphi\in C^m(\cA), \ a^j \in \cA. 
\label{eq:cyclic.cyclic-operator-cochain} 
\end{gather}
We obtain the Hochschild cochain complex $(C^\bt(\cA), b)$, where $b:C^{\bt}(\cA)\rightarrow C^{\bt+1}(\cA)$ is given by
\begin{align*}
    b\varphi(a^{0},\ldots,a^{m+1})= & \sum_{j=0}^{m}(-1)^{j}\varphi(a^{0},\ldots,a^{j}a^{j+1},\ldots,a^{m+1})\\ &+ 
    (-1)^{m+1}\varphi(a^{m+1}a^{0},\ldots, a^{m}), \qquad a^{j}\in \cA.
\end{align*}

The dual $B$-operator $B:C^{\bt}(\cA)\rightarrow C^{\bt-1}(\cA)$ is $B=Ns(1-T)$, where $T=(-1)^mt$ and $N=1+T+\cdots +T^m$ on $C^m(\cA)$. We then have a duality between the cyclic complex $C^\natural(\cA)$ and the cochain complex $C_\natural(\cA):=(C^\bt_\natural(\cA),b+SB)$, where $C^m_\natural(\cA)=C^m(\cA)\oplus C^{m-2}(\cA)\oplus \cdots$ and $S:C^m_\natural(\cA)\rightarrow C^{m+2}_\natural(\cA)$ is the inclusion of $C^m_\natural(\cA)$ into $C^{m+2}_\natural(\cA)$. We also have a natural duality between the periodic cyclic complex $C^\sharp(\cA)$ and the periodic cyclic cochain complex $C_\sharp(\cA)=(C_\sharp^\bt(\cA), b+B)$, where ${ C_\sharp^i(\cA)= \bigoplus_{q\geq 0} C^{2q+i}(\cA)}$, $i=0,1$. The cohomology of $C_\sharp(\cA)$ is the \emph{periodic cyclic cohomology} of $\cA$ and is denoted by $\HP^\bt(\cA)$. 

The cyclic cohomology of $\cA$ can be defined in terms of the cochain complex $C_\natural(\cA)$. For our purpose it is more convenient to use the original definition of Connes~\cite{Co:NCDG}  in terms of cyclic cochains. A cochain $\varphi \in C^m(\cA)$ is called \emph{cyclic} when $t\varphi =(-1)^m\varphi$. We denote by $C^m_\lambda(\cA)$ the space of cyclic $m$-cochains. As the operator $b$ preserves $C_\lambda(\cA)$, we obtain a subcomplex $C_\lambda(\cA):=(C^\bt_\lambda(\cA),b)$ of the Hochschild cochain complex of $\cA$. The \emph{cyclic cohomology} is defined as the cohomology of $C_\lambda(\cA)$ and is denoted by $H_\lambda (\cA)$. 

As the operator $B$ is annihilated by cyclic cochains, the natural inclusion of $C^\bt_\lambda(\cA)\subset C_\natural^\bt(\cA)$ is a cochain map. This is actually a quasi-isomorphism (this is even a chain homotopy equivalence; see~\cite{Ka:Crelle90}). Under this quasi-isomorphism the operator $S$ is chain homotopic to Connes' periodicity operator $S_\lambda: C^\bt_\lambda(\cA)\rightarrow C^{\bt+2}_\lambda(\cA)$, which is defined as follows. On $C^m(\cA)$, $m\geq 0$, it is given by $S_\lambda= (m+2)^{-1}(m+1)^{-1} \sum_{j=1}^{m+1}(-1)^{j}S_{j}$, where 
 \begin{align}
     S_{j}\varphi(a^{0},\ldots,a^{m+2}) = & \sum_{0\leq l\leq 
     j-2}(-1)^{l}\varphi(a^{0},\ldots,a^{l}a^{l+1},\ldots,a^{j}a^{j+1},\ldots,a^{m+2})\nonumber \\ & + 
     (-1)^{j+1}\varphi(a^{0},\ldots,a^{j-1}a^{j}a^{j+1},\ldots,a^{m+2}).
     \label{eq:CC.operatorSj}
 \end{align}
If $m=2q+i$ with $q\geq 0$ and $i\in \{0,1\}$, then the inclusion of $C^m_\lambda(\cA)$ into $C^m_\natural(\cA)$ gives an inclusion into $C^i_\sharp(\cA)$. This gives a cochain map from $C^\bt_\lambda(\cA)$ into $C^\bt_\sharp(\cA)$ which is compatible with the periodicity operator $S_\lambda$ in the sense that, for any cocycle $\varphi\in C^m_\lambda(\cA)$, the cocycles $\varphi$ and $S_\lambda \varphi$ define the same class in $\HP^\bt(\cA)$. 

A cochain $\varphi\in C^m(\cA)$ is called \emph{normalized} when  $\varphi(a^{0},\ldots,a^{m})=0$ whenever $a^{j}=1$ for some $j\geq 1$.    
We denote by $\nC^{m}(\cA)$ the space of normalized $m$-cochains. The duality between $C_m(\cA)$ and $C^m(\cA)$ descends to a duality between normalized chains and normalized cochains. The operators $b$ and $B$ preserve the space of normalized cochains. Therefore, we get normalized cochain complexes $\nC_\lambda(\cA):=(\nC_\lambda^\bt(\cA), b)$ and $\nC_\sharp(\cA):=(\nC_\sharp^\bt(\cA), b+B)$, where $\nC_\lambda^m(\cA)=C_\lambda^m(\cA)\cap \nC^m(\cA)$ and $\nC^i_\sharp(\cA)=\bigoplus_{q \geq 0} \nC^{2q+i}(\cA)$. Furthermore, the inclusion of $\nC^\bt(\cA)$ into $C^\bt(\cA)$ gives rise to a quasi-isomorphism of cochain complexes from $\nC^\bt_\lambda(\cA)$ (resp., $\nC^\bt_\sharp(\cA)$) into $C^\bt_\lambda(\cA)$ (resp., $C^\bt_\sharp(\cA)$). This enables us to represent classes in $H_\lambda^\bt(\cA)$ and $\HP^\bt(\cA)$ by normalized cochains. 

When $\cA$ is a locally convex algebra we denote by $\bC^m(\cA)$ the space of (jointly) continuous $m$-cochains $\varphi:\cA^{m+1}\rightarrow \C$. The duality between $C_m(\cA)$ and $C^m(\cA)$ induces a duality between $\bC_m(\cA)$ and $\bC^m(\cA)$. The structural operators $(d,s,t)$
 in~(\ref{eq:cyclic.end-face-cochain})--(\ref{eq:cyclic.cyclic-operator-cochain}) preserve the space of continuous cochains $\bC^\bt(\cA)$. Therefore, we obtain subcomplexes $\bC_\lambda(\cA):=(\bC_\lambda^\bt(\cA), b)$ and $\bC_\sharp(\cA):=(\bC_\sharp^\bt(\cA), b+B)$, where $\bC_\lambda^m(\cA)=C_\lambda^m(\cA)\cap \bC^m(\cA)$ and $\bC^i_\sharp(\cA)=\bigoplus_{q \geq 0} \bC^{2q+i}(\cA)$. The cohomologies of these cochain complexes are denoted by $\mathbf{H}_\lambda^\bt(\cA)$ and $\bHP^\bt(\cA)$, respectively. 
We also obtain a cochain complex $\bC_\natural(\cA):=(\bC_\natural^\bt(\cA), b+SB)$, where $\bC_\natural^m(\cA)=\bC^m(\cA)\oplus \bC^{m-2}(\cA)\oplus \cdots$. The inclusion of $\bC_\lambda^\bt(\cA)$ into $\bC_\natural^\bt(\cA)$ is a quasi-isomorphism of cochain complexes. 

\begin{example}
 As in Example~\ref{ex:HKR.smooth.functions}, let $\cA=C^\infty(M)$, where $M$ is a closed manifold. In addition, let $(\Omega_\bt(M),d)$ be the de Rham complex of currents on $M$, so that $\Omega_m(M)$ is the topological dual of $\Omega^m(M)$. By duality the HKR map~(\ref{eq:cyclic.HKR-map-chain}) gives rise to a cochain map $\alpha: (\Omega_\bt,0) \rightarrow (\bC^\bt(\cA),b)$ given by
\begin{equation*}
\alpha(C)(f^0, \ldots , f^m)= \frac{1}{m!} \acou{C}{f^0 df^1\wedge \cdots \wedge d^m}, \qquad C\in \Omega_m(M), \ f^j\in \cA. 
\end{equation*}
This maps intertwines the de Rham boundary $d$ and the operator $B$, and so it gives rise to cochain maps $\alpha: \Omega_\bt^\natural(M) \rightarrow \bC_\natural^\bt(\cA)$ and $\alpha: \Omega_{\ev/\odd}^\bt(M)\rightarrow \bC^\bt_\sharp(\cA)$, where $\Omega_m^\natural(M)=\Omega_m(M)\oplus \Omega_{m-2}(M)\oplus \cdots$ and $ \Omega_{\ev/\odd}^\bt(M)=\bigoplus_{\text{$m$ even/odd}} \Omega^m(M)$. As shown by Connes~\cite{Co:NCDG} these cochain maps are quasi-isomorphisms, and so  this allows us to express the cyclic and periodic cyclic homologies of $\cA=C^\infty(M)$ in terms of the de Rham homology $H_\bt(M,\C)$. More precisely, we obtain isomorphisms, 
\begin{gather*}
 \bfH_\lambda^m(\cA)\simeq H_m(M,\C)\oplus H_{m-2}(M,\C) \oplus \cdots, \qquad m\geq 0,\\
 \bHP^\bt(\cA) \simeq H_{\ev/\odd}(M,\C), \quad  \text{where}\ H_{\ev/\odd}(M,\C):=\bigoplus_{\text{$m$ even/odd}} H_{m}(M,\C).
\end{gather*}
\end{example}

\subsection{The Chern character in cyclic homology} Given $N\in \N$, let $M_{N}(\cA)=\cA\otimes M_{N}(\C)$ be 
the algebra of $N\times N$-matrices with coefficients in $\cA$. The standard trace $\tr:M_{N}(\cA)\rightarrow \cA$ 
gives rise to a linear map $\tr:C_{\bt}(M_{N}(\cA))\rightarrow C_{\bt}(\cA)$ defined by
 \begin{equation*}
     \tr\left[ (a^{0}\otimes \mu^{0})\otimes \cdots \otimes (a^{m}\otimes \mu^{m})\right] =\tr\left[ \mu^{0}\cdots 
     \mu^{m}\right]a^{0}\otimes \cdots \otimes a^{m},  \qquad a^{j}\in \cA, \ \mu^{j}\in M_{N}(\C).
 \end{equation*}
 This map is compatible with the structural operators $(d,s,t)$ in~(\ref{eq:cyclic.end-face-chain})--(\ref{eq:cyclic.cyclic-operator-chain}), and so it gives rise to a map of cyclic spaces $\tr: C_\bt(M_N(\cA))\rightarrow C_\bt(\cA)$. 
 
 The Chern character in cyclic homology~\cite{Co:NCDG, GS:OCCTSM} is defined as follows.  Let $\cE$ be a finitely 
 generated projective module over $\cA$, so that $\cE\simeq e\cA^{N}$ for some idempotent $e\in M_{N}(\cA)$. The Chern 
 character of $e$ is the  (normalized) even chain $\Ch(e)=\left( \Ch_{2q}(e)\right)_{q\geq 0}\in 
 \nC_{0}^\sharp(\cA)$ defined by 
  \begin{equation*}
     \Ch_{0}(e)=\tr[e], \qquad \Ch_{2q}(e)=(-1)^{q}\frac{(2q)!}{q!}\tr\left[\left(e-\frac{1}{2}\right)\otimes  
     \overbrace{e\otimes \cdots  \otimes e}^{\textrm{$2q$ times}} \right], \quad q\geq 1.
 \end{equation*}
This chain is a cycle in the normalized periodic complex $\nC^\sharp(\cA)$, and so this defines a class in $\HP_0(\cA)$. This class only depends on the $K$-theory class of $e$. Therefore, we get an additive map  $\Ch: K_{0}(\cA)\rightarrow \HP_{0}(\cA)$, which is called the \emph{Chern character} in cyclic homology. 

Composing the above Chern character with the duality pairing between $\HP_0(\cA)$ and $\HP^0(\cA)$ provides us with a bilinear pairing 
$\acou{\cdot}{\cdot}:\HP^{0}(\cA)\times K_{0}(\cA)\rightarrow \C$. Given any cyclic $2q$-cocycle $\varphi$ and any idempotent $e\in M_{N}(\cA)$, it can be shown (see, e.g., \cite[Remark~6.4]{PW:KJM16}) that
\begin{equation}
\label{eq:pairing.cyclic.cocycle.Chern.char}
 \acou{[\varphi]}{[\Ch(e)]}=(-1)^{q}\frac{(2q)!}{q!}\acou{\varphi}{\tr\left[e^{\otimes 
(2q+1)}\right]}.
\end{equation}Therefore, we recover the original pairing of Connes~\cite{Co:NCDG, Co:NCG}.

When $\cA$ is a locally convex algebra, by using the inclusion of chain complexes of $C^\sharp_\bt(\cA)$ into $\bC^\sharp_\bt(\cA)$ we obtain a Chern character 
$\bCh:K_{0}(\cA) \rightarrow \bHP_{0}(\cA)$. We then obtain a  bilinear pairing $ \acou{\cdot}{\cdot}: \bHP^{0}(\cA)\times K_{0}(\cA)\rightarrow \C$. If we let $\cA=C^\infty(M)$, where $M$ is a closed manifold, then, under the CHKR isomorphism $\bHP_0(\cA)\simeq H^\ev(M,\C)$ and the Serre-Swan isomorphism $K_0(\cA)\simeq K^0(M)$, we recover the usual Chern character $\Ch:K^0(M)\rightarrow H^\ev(M,\C)$.

\section{The Connes-Chern Character of a Twisted Spectral Triple}\label{sec:CC-character}
In this section, we shall recall the construction the Connes-Chern character of a twisted spectral triple and how this 
enables us to compute the associated index map~\cite{CM:TGNTQF, PW:KJM16}. This extends to twisted spectral triples the construction of the Connes-Chern character of an ordinary 
spectral triple by Connes~\cite{Co:NCDG}. The exposition follows closely that of~\cite{PW:KJM16}. 

Let $(\cA,\cH, D)_{\sigma}$ be a twisted spectral triple. We assume that $(\cA,\cH, D)_{\sigma}$ is \emph{$p$-summable} for some $p\geq 1$, that is, 
\begin{equation}
\label{eq:p-summable}
  \Tr   |D|^{-p} <\infty. 
\end{equation}
In what follows, letting $\cL^{1}(\cH)$ be the ideal of trace-class operators on $\cH$, we denote by $\Str$ its
supertrace, i.e., $\Str [T]=\Tr [\gamma T]$, where $\gamma:=\op{id}_{\cH^{+}}-\op{id}_{\cH^{-}}$ is 
the $\Z_{2}$-grading of $\cH$. 

\begin{definition}\label{eq:CC.tauD}
Assume $D$ is invertible and let $q$ be an integer $\ge\frac{1}{2}(p-1)$. Then $\tau^{D,\sigma}_{2q}$ is the $2q$-cochain on $\cA$ defined by 
\begin{equation}
\label{eq:tau_2k}
\tau^{D, \sigma}_{2q}(a^0, \ldots, a^{2q})=c_{q}\Str\big(D^{-1}[D, a^0]_{\sigma}\cdots D^{-1}[D, a^{2q}]_{\sigma}\big) \qquad \forall a^j\in\cA,
\end{equation}
where we have set $c_{q}=\frac12(-1)^q\frac{q!}{(2q)!}$ .  
\end{definition}

\begin{remark}
    The right-hand side of~(\ref{eq:tau_2k}) is well defined since the $p$-summability 
    condition~(\ref{eq:p-summable}) and the fact that $q\ge\frac12 (p-1)$ imply that 
    \begin{equation*}
          D^{-1}[D,a^{0}]_{\sigma}\cdots D^{-1}[D,a^{q}]_{\sigma}\in \cL^{1}(\cH) \qquad \forall a^{j}\in \cA.
    \end{equation*}
\end{remark}

\begin{proposition}[\cite{CM:TGNTQF, PW:KJM16}]\label{prop:Cochian.ConnesChernChar}
Assume $D$ is invertible and let $q$ be an integer $\geq \frac12(p-1)$. Then 
\begin{enumerate}
\item The cochain $\tau^{D,\sigma}_{2q}$ is a normalized cyclic cocycle whose class in $\op{HP}^0(\cA)$ is independent of the value of $q$. 

\item For any finitely generated projective module $\cE$ over $\cA$ and $\sigma$-connection $\nabla^{\cE}$ on $\cE$, 
\begin{equation}
    \ind D_{\nabla^{\cE}}=\acou{\tau_{2q}^{D,\sigma}}{\Ch(\cE)},
    \label{eq:CC-character.Index-Formula1}
\end{equation}where $\Ch(\cE)$ is the Chern character of $\cE$ in cyclic homology. 
\end{enumerate}
\end{proposition}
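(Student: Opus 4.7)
My plan is to follow the strategy of Connes-Moscovici and Ponge-Wang, handling the cochain-theoretic properties first and then the index pairing. Well-definedness of~(\ref{eq:tau_2k}) is forced by the $p$-summability hypothesis: each factor $D^{-1}[D,a^j]_\sigma$ has, roughly speaking, the analytic size of $|D|^{-1}$ times a bounded operator, so for $2q+1 \geq p$ the full product lies in $\cL^1(\cH)$ by H\"older. Normalization is immediate from $\sigma(1)=1$, which gives $[D,1]_\sigma=0$. For cyclicity I would note that $D$ is odd while each $a\in\cA$ is even, so $[D,a]_\sigma = Da-\sigma(a)D$ is odd, $D^{-1}$ is odd, and each product $D^{-1}[D,a^j]_\sigma$ is even. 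The graded cyclicity of $\Str$ applied to a product of $2q+1$ even operators then yields $T\tau^{D,\sigma}_{2q} = \tau^{D,\sigma}_{2q}$.

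For the Hochschild cocycle property $b\tau^{D,\sigma}_{2q}=0$, the key input is the twisted Leibniz rule $[D,ab]_\sigma = [D,a]_\sigma b + \sigma(a)[D,b]_\sigma$, together with the rewriting $D^{-1}\sigma(a) = aD^{-1} - D^{-1}[D,a]_\sigma D^{-1}$ used to move factors of $\sigma(a)$ past $D^{-1}$. Expanding each term $\tau^{D,\sigma}_{2q}(\ldots,a^ja^{j+1},\ldots)$ and regrouping produces a telescoping sum in which consecutive contributions cancel after applying the cyclicity of $\Str$; the final term $(-1)^{2q+1}\tau^{D,\sigma}_{2q}(a^{2q+1}a^0,\ldots)$ cancels the leftover from $j=0$. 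This is the direct twisted analogue of the Connes computation for ordinary spectral triples.

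To establish independence of $[\tau^{D,\sigma}_{2q}]$ in $\op{HP}^0(\cA)$ I would show $\tau^{D,\sigma}_{2q+2}$ and $S\tau^{D,\sigma}_{2q}$ are cohomologous in the cyclic complex; combined with~(\ref{eq:SB=-b}) and the inductive limit description~(\ref{eq:CC.injective-limit-HP}), this identifies their classes in $\op{HP}^0(\cA)$. Concretely, one exhibits a transgression $(2q+1)$-cochain $\psi^{D,\sigma}_{2q+1}$ built from alternating insertions of $D^{-1}$ and a derivation-like term (involving $\log|D|$ or a suitable spectral deformation), such that $b\psi^{D,\sigma}_{2q+1} = \tau^{D,\sigma}_{2q+2} - S\tau^{D,\sigma}_{2q}$. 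I expect this to be the main technical obstacle: keeping track of twist automorphism factors in $\psi$ and in the $S_{j}$ appearing in~(\ref{eq:CC.operatorSj}) requires more care than in the untwisted case, precisely because $\sigma$ does not commute through all rearrangements.

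For the index pairing~(\ref{eq:CC-character.Index-Formula1}), Proposition~\ref{prop.IndexTwisted-connection} reduces the problem to $\cE=e\cA^N$ with $\nabla^\cE$ the Grassmannian $\sigma$-connection $\nabla_0^\cE$. By Example~\ref{ex:D_grassDesigma}, $D_{\nabla_0^\cE}$ coincides with $\sigma(e)(D\otimes 1_N)$ acting from $e\cH^N$ to $\sigma(e)\cH^N$, so its Fredholm index may be expressed (by a McKean-Singer-type argument in the $q$-th approximation) as an explicit supertrace in $e$, $\sigma(e)$, and $D^{-1}$. Applying the twisted Leibniz rule to expand $[D,e]_\sigma$ and using~(\ref{eq:pairing.cyclic.cocycle.Chern.char}) to recognize the combinatorial factor $(-1)^q (2q)!/q!$ in $\Ch(e)$ then matches this expression term-by-term with $\acou{\tau^{D,\sigma}_{2q}}{\Ch(\cE)}$, completing the proof.
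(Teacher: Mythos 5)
The paper itself does not prove Proposition~\ref{prop:Cochian.ConnesChernChar}: it quotes it from \cite{CM:TGNTQF, PW:JKT}, and the only pieces of the argument that resurface here are the transgression identities used later in the proof of Lemma~\ref{lem:tauD.indep.q}. Measured against that, the routine parts of your outline are sound and agree with the cited proofs: H\"older plus $p$-summability gives trace-class products, $[D,1]_{\sigma}=0$ gives normalization, the evenness of each factor $D^{-1}[D,a^{j}]_{\sigma}$ together with the graded cyclicity of $\Str$ gives $T\tau^{D,\sigma}_{2q}=\tau^{D,\sigma}_{2q}$, and the twisted Leibniz rule combined with $D^{-1}\sigma(a)=aD^{-1}-D^{-1}[D,a]_{\sigma}D^{-1}$ is indeed the engine for $b\tau^{D,\sigma}_{2q}=0$. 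The genuine gap is in the two load-bearing claims. For the independence of $q$ you only posit a transgression cochain ``involving $\log|D|$ or a suitable spectral deformation'' and flag it as the main obstacle; this points in the wrong direction. No $\log|D|$ or spectral deformation is needed: the transgression is purely algebraic, given by $\varphi_{2q+1}(a^{0},\ldots,a^{2q+1})=\Str\big(a^{0}D^{-1}[D,a^{1}]_{\sigma}\cdots D^{-1}[D,a^{2q+1}]_{\sigma}\big)$ and $\psi_{2q+1}(a^{0},\ldots,a^{2q+1})=\Str\big(\sigma(a^{0})[D,a^{1}]_{\sigma}D^{-1}\cdots[D,a^{2q+1}]_{\sigma}D^{-1}\big)$, which satisfy $\tau^{D,\sigma}_{2q+2}=b(\varphi_{2q+1}-\psi_{2q+1})$ and $\tau^{D,\sigma}_{2q}=-B(\varphi_{2q+1}-\psi_{2q+1})$; these are exactly the identities~(\ref{eq:tauD.CCcocycle.image.b.B}) imported from Lemmas~7.4--7.5 of \cite{PW:JKT}. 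From them $\tau^{D,\sigma}_{2q+2}-\tau^{D,\sigma}_{2q}=(b+B)(\varphi_{2q+1}-\psi_{2q+1})$, and via~(\ref{eq:SB=-b}) one gets $S\tau^{D,\sigma}_{2q}=\tau^{D,\sigma}_{2q+2}$ in $\op{HC}^{2q+2}(\cA)$, which with~(\ref{eq:CC.injective-limit-HP}) settles part (1). Without exhibiting such cochains (or an equivalent homotopy argument), part (1) is not established.

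For part (2), the reduction to $\cE=e\cA^{N}$ with the Grassmannian $\sigma$-connection via Proposition~\ref{prop.IndexTwisted-connection} and Example~\ref{ex:D_grassDesigma} is legitimate, but the remaining step is asserted (``matches term-by-term'') rather than proved, and ``McKean--Singer in the $q$-th approximation'' is not the operative mechanism: in the finitely summable setting one has no heat semigroup hypothesis, and the argument of \cite{Co:NCDG, PW:JKT} instead rests on a Calder\'on--Fedosov-type index formula, $\ind T=\Tr\big[(1-ST)^{m}\big]-\Tr\big[(1-TS)^{m}\big]$ for a parametrix $S$ with $1-ST$ and $1-TS$ in a Schatten ideal, applied to $T=\sigma(e)(D\otimes 1_{N})e$ with $S=e(D^{-1}\otimes 1_{N})\sigma(e)$ (done separately for the two components of the $\Z_{2}$-grading, since the index here is the half-difference of the two Fredholm indices). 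The remainders then expand into products of the factors $D^{-1}[D,e]_{\sigma}$, and only after this expansion does~(\ref{eq:pairing.cyclic.cocycle.Chern.char}) convert the resulting supertrace into $\acou{\tau^{D,\sigma}_{2q}}{\Ch(e)}$. So the skeleton of your proposal is the right one, but both substantive steps remain unproved as written, and the hints you give for filling them would lead you away from the actual (simpler, algebraic) arguments.
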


When $D$ is not invertible, we can reduce to the invertible case by passing to the unital 
invertible double of $(\cA,\cH,D)_{\sigma}$  as follows.  

Consider the Hilbert space $\tilde{\cH}=\cH\oplus \cH$, which we equip with the $\Z_{2}$-grading given by
\begin{equation*}
    \tilde{\gamma}=\begin{pmatrix}\gamma & 0 \\ 0 & -\gamma \end{pmatrix},
\end{equation*}where $\gamma$ is the grading operator of $\cH$. On $\tilde{\cH}$ consider the selfadjoint operator,
\begin{equation*}
    \tilde{D}=\begin{pmatrix} D & 1 \\ 1 &  -D \end{pmatrix}, \qquad \dom(\tilde{D}):=\dom(D) \oplus \dom(D). 
\end{equation*}Noting that 
\begin{equation*}
    \tilde{D}^{2}=\begin{pmatrix}
        D^{2}+1 & 0 \\
        0 & D^{2}+1
    \end{pmatrix},
\end{equation*}we see that $\tilde{D}$ is invertible and $|\tilde{D}|^{-p}$ is a trace-class operator. Let $\tilde{\cA}=\cA\oplus \C$ be the unitalization of $\cA$ whose
product and involution are given by
\begin{equation*}
    (a,\lambda)(b,\mu)=(ab+\lambda b+\mu a,\lambda\mu), \qquad (a,\lambda)^{*}=(a^{*},\overline{\lambda}), \qquad 
    a,b\in \cA, \quad \lambda,\mu\in \C.
\end{equation*}The unit of $\tilde{\cA}$ is $1_{\tilde{\cA}}=(0,1)$. Thus, identifying any element $a\in \cA$ with 
$(a,0)$, any element $\tilde{a}=(a,\lambda)\in \tilde{\cA}$ can be uniquely written as $(a,\lambda)=a+\lambda 
1_{\tilde{\cA}}$. We represent $\tilde{\cA}$ in $\cH$ using the representation $\pi:\tilde{\cA}\rightarrow \cL(\cH)$ 
given by
\begin{equation*}
    \pi(1_{\tilde{\cA}})=1, \qquad \pi(a)=\begin{pmatrix} 
    a & 0 \\ 
    0 & 0
\end{pmatrix} \qquad \forall a \in \cA. 
\end{equation*}In addition, we extend the automorphism $\sigma$ into the 
automorphism $\tilde{\sigma}:\tilde{\cA}\rightarrow \tilde{\cA}$ given by
\begin{equation*}
    \tilde{\sigma}(a+\lambda 1_{\tilde{\cA}})=\sigma(a)+\lambda 1_{\tilde{\cA}} \qquad \forall (a,\lambda)\in \cA\times \C.
\end{equation*}
It can be verified that any twisted commutator $[\tilde{D},\pi(\tilde{a})]_{\tilde{\sigma}}$, $\tilde{a}\in 
\tilde{\cA}$, is bounded. We then deduce that $(\tilde{\cA},\tilde{\cH},\tilde{D})_{\tilde{\sigma}}$ is a 
$p$-summable twisted spectral triple. Moreover, as $\tilde{D}$ is invertible we may define the normalized cyclic 
cocycles $\tau_{2q}^{\tilde{D},\tilde{\sigma}}$, $q\geq \frac{1}{2}(p-1)$. 

\begin{definition}
    Let $q\geq \frac{1}{2}(p-1)$. Then $\overline{\tau}_{2q}^{D,\sigma}$ is the $2q$-cochain on $\cA$ defined by 
    \begin{align*}
      \overline{\tau}_{2q}^{\tilde{D}, \sigma}(a^{0},\ldots,a^{2q})   & = \tau_{2q}^{\tilde{D}, \sigma}(a^{0},\ldots,a^{2q})\\
         & = c_q\Str\big(\tilde{D}^{-1}[\tilde{D}, \pi(a^0)]_{\sigma}\cdots \tilde{D}^{-1}[\tilde{D}, 
         \pi(a^{2q})]_{\sigma}\big) \qquad \forall a^j\in\cA. 
    \end{align*}
\end{definition}

\begin{remark}
The cochain $\overline{\tau}_{2q}^{\tilde{D}}$ is the restriction to $\cA^{2q+1}$ of the cochain 
$\tau_{2q}^{\tilde{D},\tilde{\sigma}}$. Note that, as the 
restriction to $\cA$ of the representation $\pi$ is not unital, unlike in the invertible case, we don't have a 
normalized cochain.     
\end{remark}

\begin{proposition}[\cite{PW:KJM16}]\label{lem:Cochian.ConnesChernChar}
Let $q$ be an integer $\geq \frac12(p-1)$. 
\begin{enumerate}
\item The cochain $\overline{\tau}^{D,\sigma}_{2q}$ is a cyclic cocycle whose class in $\op{HP}^0(\cA)$ is independent of $q$. 

\item If $D$ is invertible, then the cocycles $\tau_{2q}^{D,\sigma}$ and 
    $\overline{\tau}^{D,\sigma}_{2q}$ are cohomologous in $\op{HP}^{0}(\cA)$.
    
\item  For any finitely generated projective module $\cE$ over $\cA$ and $\sigma$-connection $\nabla^{\cE}$ on $\cE$,
\begin{equation}
    \ind D_{\nabla^{\cE}}=\acou{\tau_{2q}^{D,\sigma}}{\Ch(\cE)} .
        \label{eq:CC-character.Index-Formula2}
    \end{equation}
\end{enumerate}   
\end{proposition}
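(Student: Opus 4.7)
The plan is to deduce all three parts by reducing to the invertible case handled in Proposition~\ref{prop:Cochian.ConnesChernChar}, applied to the $p$-summable twisted spectral triple $(\tilde\cA,\tilde\cH,\tilde D)_{\tilde\sigma}$ obtained from the unital invertible doubling construction recalled above. The key observation is that $\overline\tau^{D,\sigma}_{2q}$ is by definition the restriction to $\cA^{2q+1}$ of the normalized cyclic cocycle $\tau^{\tilde D,\tilde\sigma}_{2q}$ on $\tilde\cA$, which already satisfies the conclusions of Proposition~\ref{prop:Cochian.ConnesChernChar}.

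For part~(1), first observe that the restriction map $\iota^{*}:C^{\bt}(\tilde\cA)\to C^{\bt}(\cA)$ induced by the inclusion $\iota:\cA\hookrightarrow\tilde\cA$, $a\mapsto(a,0)$, commutes with the Hochschild coboundary $b$ and with the cyclic operator $T$, so $\overline\tau^{D,\sigma}_{2q}=\iota^{*}\tau^{\tilde D,\tilde\sigma}_{2q}$ is a cyclic $b$-cocycle. To obtain the independence of $[\overline\tau^{D,\sigma}_{2q}]\in \op{HP}^{0}(\cA)$ from $q$, I would use the direct-sum decomposition $\tilde\cA\simeq \cA\oplus\C$ and the induced splitting $\op{HP}^{\bt}(\tilde\cA)\simeq \op{HP}^{\bt}(\cA)\oplus \op{HP}^{\bt}(\C)$, under which $\iota^{*}$ becomes the projection onto the first summand; since $[\tau^{\tilde D,\tilde\sigma}_{2q}]$ is $q$-independent in $\op{HP}^{0}(\tilde\cA)$ by Proposition~\ref{prop:Cochian.ConnesChernChar}, so is its projection $[\overline\tau^{D,\sigma}_{2q}]$.

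For part~(2), assume $D$ is invertible. To compare $\tau^{D,\sigma}_{2q}$ with $\overline\tau^{D,\sigma}_{2q}$, I would consider the one-parameter family of invertible selfadjoint operators
\begin{equation*}
\tilde D_{t}=\begin{pmatrix} D & t \\ t & -D\end{pmatrix}, \qquad t\in[0,1],
\end{equation*}
on $\tilde\cH$, interpolating between $\tilde D_{0}=D\oplus(-D)$ and $\tilde D_{1}=\tilde D$. Each $\tilde D_{t}$ makes $(\tilde\cA,\tilde\cH,\tilde D_{t})_{\tilde\sigma}$ into a $p$-summable twisted spectral triple with invertible Dirac operator. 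A direct block computation using the grading $\tilde\gamma=\gamma\oplus(-\gamma)$ shows that $\iota^{*}\tau^{\tilde D_{0},\tilde\sigma}_{2q}=\tau^{D,\sigma}_{2q}$, while $\iota^{*}\tau^{\tilde D_{1},\tilde\sigma}_{2q}=\overline\tau^{D,\sigma}_{2q}$ by definition. A Chern--Simons type transgression formula --- obtained by differentiating $\tau^{\tilde D_{t},\tilde\sigma}_{2q}$ in $t$ and recognizing the derivative as $(b+B)$ applied to an explicit cochain built from $\tilde D_{t}$, $\dot{\tilde D}_{t}$, and twisted commutators --- then shows that the two cocycles are cohomologous in $\op{HP}^{0}(\cA)$.

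For part~(3), I would extend the $\sigma$-connection $\nabla^{\cE}$ to a $\tilde\sigma$-connection $\tilde\nabla^{\tilde\cE}$ on the induced finitely generated projective module $\tilde\cE:=\cE\otimes_{\cA}\tilde\cA$ over $\tilde\cA$, obtained by combining $\nabla^{\cE}$ with the trivial connection on the $\C$-summand. Proposition~\ref{prop:Cochian.ConnesChernChar} applied to $(\tilde\cA,\tilde\cH,\tilde D)_{\tilde\sigma}$ then yields $\ind \tilde D_{\tilde\nabla^{\tilde\cE}}=\acou{\tau^{\tilde D,\tilde\sigma}_{2q}}{\Ch(\tilde\cE)}$. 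Two identifications complete the proof: first, $\ind \tilde D_{\tilde\nabla^{\tilde\cE}}=\ind D_{\nabla^{\cE}}$, which follows from the block structure of $\tilde D$ (the off-diagonal identity blocks yield an isomorphism on the second summand, so only the $D_{\nabla^{\cE}}$-piece contributes to kernel and cokernel); second, $\acou{\tau^{\tilde D,\tilde\sigma}_{2q}}{\Ch(\tilde\cE)}=\acou{\overline\tau^{D,\sigma}_{2q}}{\Ch(\cE)}$, which follows from the normalization of $\tau^{\tilde D,\tilde\sigma}_{2q}$, since the components of $\Ch(\tilde\cE)$ lying outside $\cA^{\otimes\bt}$ involve the unit $1_{\tilde\cA}$ and are therefore annihilated. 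The main obstacle I anticipate is the explicit transgression formula in part~(2): producing a primitive for $\tfrac{d}{dt}\tau^{\tilde D_{t},\tilde\sigma}_{2q}$ requires a careful calculation with twisted commutators and the $\sigma$-Leibniz rule, and one must verify that the boundary contributions match correctly across both $b$ and $B$; the remaining steps, while requiring some bookkeeping around the unitalization, are essentially formal consequences of the invertible case.
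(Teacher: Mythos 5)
Your overall strategy --- reduce everything to the invertible double $(\tilde{\cA},\tilde{\cH},\tilde{D})_{\tilde{\sigma}}$, realize $\tau^{D,\sigma}_{2q}$ and $\overline{\tau}^{D,\sigma}_{2q}$ as restrictions along $\iota:\cA\hookrightarrow\tilde{\cA}$ of the cocycles of $\tilde{D}_{0}=D\oplus(-D)$ and $\tilde{D}_{1}=\tilde{D}$ joined by the operator homotopy $\tilde{D}_{t}$, and transfer the index pairing through $\tilde{\cE}\simeq e\tilde{\cA}^{N}$ --- is exactly the route of \cite{PW:JKT}, which is where the paper takes this proposition from; the same mechanisms reappear verbatim in Section~\ref{sec:smooth-twisted-ST} (Lemmas~\ref{lem:tauD.indep.q}--\ref{lem:path.saDirac.cyclic.cocycle}) and in the proof of Proposition~\ref{prop:conf.Dirac.same.Ch}. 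So the architecture is right; the issues are local.

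Two of your justifications do not work as written. First, in parts (1) and (2) you implicitly let restriction along the non-unital inclusion $\iota$ act on $\op{HP}^{0}$ (``$\iota^{*}$ becomes the projection onto the first summand''). But $B$ evaluates cochains at the unit, and $1_{\cA}\neq 1_{\tilde{\cA}}$, so $\iota^{*}$ is not a map of $(b+B)$-complexes (indeed the paper remarks that $\overline{\tau}^{D,\sigma}_{2q}$ is not normalized). The repair is the paper's: $\iota^{*}$ does commute with $b$ and with $S$, the relation $S\tau^{\tilde{D},\tilde{\sigma}}_{2q}=\tau^{\tilde{D},\tilde{\sigma}}_{2q+2}$ modulo $b$-coboundaries of cyclic cochains (see~(\ref{eq:tauD.CCcocycle.S})) and the transgression identity, once converted via~(\ref{eq:SB=-b}) into $b$-coboundaries, restrict to $\cA$, and one concludes through $\varinjlim\left(\op{HC}^{2q}(\cA),S\right)=\op{HP}^{0}(\cA)$. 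Second, in part (3) the equality $\acou{\tau^{\tilde{D},\tilde{\sigma}}_{2q}}{\Ch(\tilde{\cE})}=\acou{\overline{\tau}^{D,\sigma}_{2q}}{\Ch(\cE)}$ is not a consequence of normalization: the only unit in $\Ch(e)$ sits in the zeroth slot (through $e-\tfrac12$), where the normalization condition~(\ref{eq:NormalizedCondition}) says nothing, and $\overline{\tau}^{D,\sigma}_{2q}$ is not normalized. What actually saves you is that $[\tilde{D},\pi(1_{\tilde{\cA}})]_{\tilde{\sigma}}=0$ kills those terms on the $\tilde{\cA}$ side, while on the $\cA$ side one invokes the pairing identity~(\ref{eq:pairing.cyclic.cocycle.Chern.char}), which reduces both pairings to the common quantity involving $\tr\left[e^{\otimes(2q+1)}\right]$ with entries in $\cA$. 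Finally, in (3) the index identification is not that ``the off-diagonal blocks give an isomorphism on the second summand'': since $\cA$ acts by zero on the second copy of $\cH$, one has $\cH(\tilde{\cE})\simeq e\cH^{N}\oplus 0$, and for the Grassmannian $\tilde{\sigma}$-connection $\tilde{D}_{\tilde{\nabla}}$ literally coincides with $D_{\nabla_{0}^{\cE}}$; the case of an arbitrary $\sigma$-connection then follows from Proposition~\ref{prop.IndexTwisted-connection}. With these three repairs your argument goes through and coincides with the intended proof.
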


All this leads us to the following definition. 

\begin{definition} 
The Connes-Chern character of $(\cA,\cH,D)_{\sigma}$, denoted by $\Ch(D)_{\sigma}$, is defined 
as follows:
\begin{itemize}
    \item  If $D$ is invertible, then $\Ch(D)_{\sigma}$ is the common class in $\op{HP}^{0}(\cA)$ of the 
    cyclic cocycles $\tau_{2q}^{D,\sigma}$ and $\overline{\tau}_{2q}^{D,\sigma}$, with $q\geq \frac{1}{2}(p-1)$. 

    \item  If $D$ is not invertible, then $\Ch(D)_{\sigma}$ is the common class in $\op{HP}^{0}(\cA)$ of the
    cyclic cocycles $\overline{\tau}_{2q}^{D,\sigma}$, $q\geq \frac{1}{2}(p-1)$. 
\end{itemize}
\end{definition}

\begin{remark}
    When $\sigma=\op{id}$ the Connes-Chern character is simply denoted $\Ch(D)$; this is the usual Connes-Chern Character of 
    an ordinary spectral triple constructed by Connes~\cite{Co:NCDG}. 
\end{remark}

With this definition in hand, the index formulas~(\ref{eq:CC-character.Index-Formula1}) and~(\ref{eq:CC-character.Index-Formula2}) can be merged onto the following 
result. 

\begin{proposition}[\cite{CM:TGNTQF, PW:KJM16}]\label{thm:CCC-index-formula}
 For any Hermitian finitely generated projective module $\cE$ over $\cA$ and any $\sigma$-connection $\nabla^{\cE}$ on $\cE$, 
\begin{equation}
\label{eq:IndexAsPairing}
\ind D_{\nabla^{\cE}}=\acou{\Ch(D)_{\sigma}}{\Ch(\cE)},
\end{equation}where $\Ch(\cE)$ is the Chern character of $\cE$ in cyclic homology. 
\end{proposition}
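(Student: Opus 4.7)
The plan is to deduce the result directly from the two previous index formulas, namely Propositions~\ref{prop:Cochian.ConnesChernChar} and~\ref{lem:Cochian.ConnesChernChar}, by working through the case split already encoded in the definition of $\Ch(D)_{\sigma}$. The only conceptual input needed beyond those propositions is the fact, recalled in Section~\ref{sec:CyclicCohomChernChar}, that the duality pairing between $\op{HP}^{0}(\cA)$ and $\op{HP}_{0}(\cA)$ depends only on the cohomology and homology classes of its arguments; consequently, any representative cocycle of $\Ch(D)_{\sigma}$ and any representative cycle of $\Ch(\cE)$ yield the same numerical value for the right-hand side of~(\ref{eq:IndexAsPairing}).

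In the invertible case I would fix any integer $q \geq \tfrac{1}{2}(p-1)$ and invoke Proposition~\ref{prop:Cochian.ConnesChernChar}(2), which gives $\ind D_{\nabla^{\cE}} = \acou{\tau_{2q}^{D,\sigma}}{\Ch(\cE)}$. By definition $\Ch(D)_{\sigma}$ is the class of $\tau_{2q}^{D,\sigma}$ in $\op{HP}^{0}(\cA)$, and since $\Ch(\cE) \in \op{HP}_{0}(\cA)$, the right-hand side is exactly $\acou{\Ch(D)_{\sigma}}{\Ch(\cE)}$. In the non-invertible case the same scheme applies, using Proposition~\ref{lem:Cochian.ConnesChernChar}(3) to identify $\ind D_{\nabla^{\cE}}$ with $\acou{\overline{\tau}_{2q}^{D,\sigma}}{\Ch(\cE)}$ and the definition of $\Ch(D)_{\sigma}$ as the common class of the $\overline{\tau}_{2q}^{D,\sigma}$. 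The independence of the right-hand sides from $q$ is built into both Propositions~\ref{prop:Cochian.ConnesChernChar}(1) and~\ref{lem:Cochian.ConnesChernChar}(1).

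One compatibility check should be made explicit: in the invertible case $\Ch(D)_{\sigma}$ is the common class of $\tau_{2q}^{D,\sigma}$ and $\overline{\tau}_{2q}^{D,\sigma}$, so both choices of representative must produce the same pairing with $\Ch(\cE)$; this is guaranteed by Proposition~\ref{lem:Cochian.ConnesChernChar}(2), which asserts that the two cocycles are cohomologous in $\op{HP}^{0}(\cA)$. Beyond this bookkeeping, I do not anticipate any genuine obstacle. The real analytic content — the construction of the cocycles, the trace-class estimates underlying Definition~\ref{eq:CC.tauD}, the proof that the pairings compute $\ind D_{\nabla^{\cE}}$, and the doubling trick that handles non-invertible $D$ — has already been carried out in Section~\ref{sec:TwistedST} and in the earlier portion of the present section. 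The proof of~(\ref{eq:IndexAsPairing}) therefore amounts to assembling these ingredients under the unified label $\Ch(D)_{\sigma}$.
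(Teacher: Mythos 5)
Your proposal is correct and follows exactly the paper's route: the paper offers no separate argument but simply "merges" the index formulas of Propositions~\ref{prop:Cochian.ConnesChernChar} and~\ref{lem:Cochian.ConnesChernChar} under the definition of $\Ch(D)_{\sigma}$, relying as you do on the well-definedness of the pairing between $\op{HP}^{0}(\cA)$ and $\op{HP}_{0}(\cA)$ at the level of classes and on Proposition~\ref{lem:Cochian.ConnesChernChar}(2) for the consistency of the two representatives in the invertible case.
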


\begin{example}[\cite{Co:NCDG, BF:APDOIT, Po:CMP1}]
\label{ex:local.index.formula.DST}
    Let $(M^n, g)$ be a compact Riemannian manifold. The Connes-Chern character $\Ch(\sD_g)$ of the Dirac spectral triple 
    $(C^{\infty}(M), L^2(M, \sS), \sD_g)$ is cohomologous to the even periodic cocycle $\varphi=(\varphi_{2q})_{q\geq 
    0}$ given by
    \begin{equation}
    \label{eq:CMcocycle.Dirac}
    \varphi_{2q}(f^0, \ldots, f^{2q})=\frac{(2i\pi)^{-\frac{n}{2}}}{(2q)!}\int_M\hat{A}(R^M)\wedge 
    f^0df^1\wedge\ldots\wedge df^{2q}, \quad f^j\in C^\infty(M).
    \end{equation}
   This allows us to recover the index theorem of Atiyah-Singer~\cite{AS:IEO1, AS:IEO3} for Dirac operators. 
\end{example}

\begin{remark}
 The definitions of the cocycles $\tau_{2q}^{D,\sigma}$ and 
 $\overline{\tau}_{2q}^{D,\sigma}$ involve the usual (super)trace on trace-class 
 operators, but this is not a local functional since it does not vanish on finite rank operators. As 
 a result this cocycle is difficult to compute in practice (see, e.g.,~\cite{BF:APDOIT}). Therefore, it stands for 
 reason to seek for a representatative of the Connes-Chern character which is easier to compute. For ordinary spectral 
 triples, and under further assumptions, such a representative is provided by the CM cocycle of 
 Connes-Moscovici~\cite{CM:GAFA95}. This cocycle is an even periodic cycle whose components are given by formulas which 
 are \emph{local} in the sense of noncommuative geometry. More precisely, they 
 involve a version for spectral triples of the noncommutative residue trace of Guillemin~\cite{Gu:NPWF} and 
 Wodzicki~\cite{Wo:LISA}. This provides us with the local index formula in noncommutative geometry. 
 In Example~\ref{ex:local.index.formula.DST} the cocycle $\varphi=(\varphi_{2q})$ given 
 by~(\ref{eq:CMcocycle.Dirac}) is precisely the CM cocycle of the Dirac spectral triple $(C^{\infty}(M), L^2(M, \sS), 
 \sD_g)$ (see~\cite[Remark~II.1]{CM:GAFA95} and~\cite{Po:CMP1}). 
\end{remark}

\begin{remark}\label{rem:Moscivici.ansatz}
 In the case of twisted spectral triples, Moscovici~\cite{Mo:LIFTST} attempted to extend the local index formula to the setting of twisted spectral triples. 
 He devised an ansatz for such a local index formula  and verified it in the special case of a ordinary spectral triples twisted by scaling 
 automorphisms (see~\cite{Mo:LIFTST} for the precise definition). Whether Moscovici's ansatz holds for a larger class 
of twisted spectral triples still remains an open question to date.  For instance, it is not known if Moscovici ansatz 
holds for conformal deformations of ordinary spectral triples satisfying the local index formula in noncommutative 
geometry. 
\end{remark}

\section{Twisted Spectral Triples over Locally Convex Algebras}\label{sec:smooth-twisted-ST} 
In this section, we shall explain how to refine the 
construction of the Connes-Chern character for twisted spectral triples over locally convex algebras. 

In what follows by \emph{locally convex $*$-algebra} we shall mean a $*$-algebra $\cA$ equipped with a locally convex 
space topology with respect to which  its product is a jointly continuous bilinear map from $\cA^{2}\rightarrow \cA$ 
and its involution is a continuous anti-linear map.  

\begin{definition}
\label{def:TST.smooth}
    A twisted spectral triple $(\cA,\cH, D)_{\sigma}$ over a locally convex  $*$-algebra $\cA$ is called smooth when the 
    following conditions hold:
    \begin{enumerate}
        \item  The representation of $\cA$ in $\cL(\cH)$ is continuous. 
    
        \item  The map $a\rightarrow [D,a]_{\sigma}$ is continuous from $\cA$ to $\cL(\cH)$. 

        \item  The automorphism $\sigma:\cA\rightarrow \cA$ is a homeomorphism. 
    \end{enumerate}
\end{definition}

\begin{example}
    Let $(M^{n},g)$ be a closed spin Riemannian manifold of even dimension. Then the associated Dirac spectral triple 
    $(C^{\infty}(M), L^{2}_{g}(M,\sS),\sD_{g})$ is smooth.
\end{example}

\begin{example}
    Any conformal deformation of a smooth ordinary spectral triple yields a smooth 
    twisted spectral triple. 
\end{example}

\begin{remark}
 As we shall see in Section~\ref{sec:Conformal-CC-character} the conformal Dirac spectral triple of~\cite{CM:TGNTQF} is smooth. 
\end{remark}

Throughout the rest of this section we let $(\cA,\cH,D)_{\sigma}$ be a \emph{smooth} twisted spectral triple which is 
$p$-summable for some $p\geq 1$. We shall now show that the Connes-Chern character of $(\cA,\cH,D)_{\sigma}$, which is 
originally defined as a class in $\op{HP}^{0}(\cA)$, actually descends to a class in $\op{\mathbf{HP}}^{0}(\cA)$. 

\begin{lemma}
\label{lem:tauD.indep.q}
    Let $q$ be any integer~$\geq \frac{1}{2}(p-1)$.
    \begin{enumerate}
        \item  If $D$ is invertible, then the cyclic cocycle  $\tau_{2q}^{D, \sigma}$ is continuous and its class in 
        $\op{\mathbf{HP}}^{0}(\cA)$ is independent of $q$.
    
        \item   The cyclic cocycle  $\overline{\tau}_{2q}^{D, \sigma}$ is continuous and its class in 
        $\op{\mathbf{HP}}^{0}(\cA)$ is independent of $q$.
    \end{enumerate}
\end{lemma}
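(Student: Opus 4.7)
The plan is to refine each step of the algebraic proof of Propositions~\ref{prop:Cochian.ConnesChernChar} and~\ref{lem:Cochian.ConnesChernChar} to the continuous setting. The key analytic input is smoothness: it guarantees that every twisted commutator $[D,a]_\sigma$ depends continuously on $a\in\cA$, and that composition with the representation of $\cA$ on $\cH$ is continuous.

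First I would establish continuity of $\tau_{2q}^{D,\sigma}$ when $D$ is invertible. Since $|D|^{-p}$ is trace class, $D^{-1}\in\cL^p(\cH)$. For $q\geq\tfrac12(p-1)$, the product
\begin{equation*}
    D^{-1}[D,a^0]_\sigma\,\cdots\,D^{-1}[D,a^{2q}]_\sigma
\end{equation*}
has $2q+1\geq p$ factors each of the form $D^{-1}[D,a^j]_\sigma\in\cL^p(\cH)$, so the H\"older inequality for Schatten ideals gives a trace-norm bound by $\|D^{-1}\|_p^{2q+1}\prod_j\|[D,a^j]_\sigma\|_{\cL(\cH)}$. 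Combined with continuity of the supertrace on $\cL^1(\cH)$ and joint continuity of $a\mapsto[D,a]_\sigma$, this shows $\tau_{2q}^{D,\sigma}\in\mathbf{C}^{2q}(\cA)$. For part (2) I would pass to the unital invertible double $(\tilde\cA,\tilde\cH,\tilde D)_{\tilde\sigma}$, with $\tilde\cA=\cA\oplus\C$ given its direct-sum locally convex topology. One checks that the double is itself a smooth $p$-summable twisted spectral triple (the three conditions of Definition~\ref{def:TST.smooth} are inherited; note $\tilde D^2=D^2+1$ so $\tilde D^{-1}\in\cL^p(\tilde\cH)$), and then restricts the conclusion of part (1) from $\tilde\cA^{2q+1}$ to $\cA^{2q+1}$, yielding continuity of $\overline{\tau}_{2q}^{D,\sigma}$.

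For independence of $q$ I would revisit the transgression already used at the algebraic level in Propositions~\ref{prop:Cochian.ConnesChernChar} and~\ref{lem:Cochian.ConnesChernChar}. There one exhibits an explicit cochain $\psi_{2q}$ such that $\tau_{2q+2}^{D,\sigma}-S\tau_{2q}^{D,\sigma}=(b+B)\psi_{2q}$, and $\psi_{2q}$ is itself a sum of supertraces of products of operators of the form $D^{-1}[D,a]_\sigma$ with possibly additional factors of $D^{-1}$ and the grading operator~$\gamma$. Since every such intermediate product contains at least $p$ factors drawn from $\cL^p(\cH)$, the H\"older estimate of Step~1 applies verbatim and yields continuity of $\psi_{2q}$. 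This promotes the equality $[\tau_{2q}^{D,\sigma}]=[\tau_{2q+2}^{D,\sigma}]$ from $\op{HP}^0(\cA)$ to $\op{\mathbf{HP}}^0(\cA)$, and the same argument after doubling handles $\overline{\tau}_{2q}^{D,\sigma}$.

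The main obstacle I anticipate lies in Step~3: one must verify that \emph{every} multilinear map entering the algebraic transgression is manifestly of the form ``supertrace of a product containing sufficiently many factors of $D^{-1}$'', so that the continuity estimate of Step~1 transfers without modification and multilinearly in all slots. No new ideas beyond smoothness should be required, but careful bookkeeping of the number of $D^{-1}$'s appearing in the explicit cobounding cochains is where the technical work of the proof will concentrate.
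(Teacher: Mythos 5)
Your proposal is essentially the paper's proof: continuity of $\tau_{2q}^{D,\sigma}$ via H\"older's inequality in Schatten ideals together with the continuity of $a\mapsto[D,a]_{\sigma}$, continuity of $\overline{\tau}_{2q}^{D,\sigma}$ by checking that the invertible double is again a smooth $p$-summable twisted spectral triple and restricting, and $q$-independence by observing that the explicit transgression cochains of~\cite{PW:JKT} (which are supertraces of products of $a^{0}$ or $\sigma(a^{0})$ with factors $D^{-1}[D,a^{j}]_{\sigma}$) are continuous by the same H\"older estimate. The one step your sketch glosses over is the $q$-independence for $\overline{\tau}_{2q}^{D,\sigma}$: "the same argument after doubling" is not quite automatic, because the operators $B_{0}$ and $B$ insert the unit of $\tilde{\cA}$, so a $(b+B)$-coboundary relation on $\tilde{\cA}$ does not restrict along the non-unital inclusion $\cA\hookrightarrow\tilde{\cA}$ to a $(b+B)$-coboundary relation on $\cA$; the paper circumvents this by recasting the transgression as the identity $S\tau_{2q}^{\tilde{D},\tilde{\sigma}}=\tau_{2q+2}^{\tilde{D},\tilde{\sigma}}$ in $\op{\mathbf{HC}}^{2q+2}(\tilde{\cA})$ (using $S=-bB^{-1}$), since the periodicity operator $S$ and the Hochschild coboundary $b$ only multiply consecutive arguments and hence do commute with restriction to $\cA$, after which equality of classes in $\op{\mathbf{HP}}^{0}(\cA)$ follows.
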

\begin{proof}
   Assume that $D$ is invertible and let $q$ be an integer~$\geq \frac{1}{2}(p-1)$. By assumption the map $a\rightarrow 
   [D,a]_{\sigma}$ is continuous from $\cA$ to $\cL(\cH)$. Combining this with H\"older's inequality for Schatten 
   ideals we deduce that the map $(a^{0},\ldots,a^{2q})\rightarrow \gamma D^{-1}[D,a^{0}]\cdots D^{-1}[D,a^{2q}]$ is 
   continuous from $\cA^{2q+1}$ to $\cL^{1}(\cH)$. As $\tau_{2q}^{D,\sigma}$ is (up to a multiple constant) the 
   composition of this map with the operator trace, we deduce that $\tau_{2q}^{D,\sigma}$ is a continuous cochain. 
   
  Moreover, by Lemma~7.4 and Lemma~7.5 of~\cite{PW:KJM16} we have
   \begin{equation}
   \label{eq:tauD.CCcocycle.image.b.B}
       \tau_{2q+2}^{D,\sigma}-\tau_{2q}^{D,\sigma}=(b+B)(\varphi_{2q+1}-\psi_{2q+1}), 
   \end{equation}where, up to normalization constants, the cochains $\varphi_{2q+1}$ and $\psi_{2q+1}$ are given by
    \begin{gather*}
         \varphi_{2q+1} (a^0, \ldots, a^{2q+1})=
        \Str(a^0 D^{-1}[D, a^1]_{\sigma}\cdots D^{-1}[D, a^{2q+1}]_{\sigma}),\\ 
 \psi_{2q+1}(a^0, \ldots, a^{2q+1})
 =\Str(\sigma(a^0)[D, a^1]_{\sigma}D^{-1}\cdots[D, a^{2q+1}]_{\sigma}D^{-1}), \qquad a^j\in\cA.
\end{gather*}
Note that $ \varphi_{2q+1}$ and $ \psi_{2q+1}$ are normalized cochains. Moreover, in the same way as with the cocycle 
$\tau_{2q}^{D,\sigma}$ we can show that these cochains are continuous.  Therefore~(\ref{eq:tauD.CCcocycle.image.b.B}) 
shows that the images in $\bC^{2q+2}_{\natural}(\cA)$ of $\tau_{2q+2}^{D,\sigma}$ and $\tau_{2q}^{D,\sigma}$ are 
cohomologous, and, hence, they define the same class in $\bHP^{0}(\cA)$.  It then follows that the class of 
$\tau_{2q}^{D,\sigma}$ in $\bHP^{0}(\cA)$ is independent of $q$. This also implies that 
\begin{equation}
 \label{eq:tauD.CCcocycle.S}
    S_{\lambda}\tau_{2q}^{D,\sigma}=\tau_{2q+2}^{D, \sigma} \quad \text{in $\mathbf{H}_{\lambda}^{2q+2}(\cA)$}.
\end{equation}

   When $D$ is not invertible, we observe that the unitalization $\tilde{\cA}=\cA\oplus \C$ is a locally convex $*$-algebra with respect to the direct sum 
   topology. It then can be checked that the invertible double $(\tilde{\cA},\tilde{\cH},\tilde{D})_{\tilde{\sigma}}$ 
   considered in Section~\ref{sec:CC-character}
   is a smooth twisted spectral triple. Therefore, the first part of the proof shows that the cocycle 
   $\tau_{2q}^{\tilde{D},\sigma}$ is continuous.  
   As the inclusion of $\cA$ into $\tilde{\cA}$ is continuous and $\overline{\tau}_{2q}^{D,\sigma}$ is the restriction 
   to $\cA$ of $\tau_{2q}^{\tilde{D},\sigma}$,  we then see that  $\overline{\tau}_{2q}^{D,\sigma}$ is a continuous 
   cocycle on $\cA$. Moreover, using~(\ref{eq:CC.operatorSj}) it can be checked that $S_{\lambda}\overline{\tau}_{2q}^{D,\sigma}$ is the restriction to $\cA$ of 
   $S_{\lambda}\tau_{2q}^{\tilde{D},\sigma}$. Therefore using~(\ref{eq:tauD.CCcocycle.S}) we deduce that the cocycles
   $S_{\lambda}\overline{\tau}_{2q}^{D,\sigma}$ and $\overline{\tau}_{2q+2}^{D,\sigma}$ are cohomologous in 
   $\mathbf{H}^{2q+2}_{\lambda}(\cA)$. It then follows that $\overline{\tau}_{2q}^{D,\sigma}$ and 
   $\overline{\tau}_{2q+2}^{D,\sigma}$ define the same class in $\bHP^{0}(\cA)$, and so the class of 
   $\overline{\tau}_{2q}^{D,\sigma}$ in $\op{\mathbf{HP}}^{0}(\cA)$ is independent of $q$. The proof is complete. 
 \end{proof}

In addition, we will also need the following version  for smooth twisted spectral triples of~\cite[Propositon~C.1]{PW:KJM16} on the homotopy invariance of 
the Connes-Chern character of a twisted spectral triple. 

\begin{lemma}
\label{lem:path.saDirac.cyclic.cocycle}
    Assume $D$ is invertible and consider an operator homotopy of the form, 
    \begin{equation*}
        D_{t}=D+V_{t}, \qquad 0\leq t \leq 1,
    \end{equation*}where $(V_{t})_{0\leq t \leq 1}$ is a $C^{1}$-family of selfadjoint operators in  $\cL(\cH)$ 
    anticommuting with the grading $\gamma$ such that $D_{t}$ is 
    invertible for all $t\in [0,1]$ and $(D_{t}^{-1})_{0\leq t \leq 1}$ is a bounded family in $\cL^{p}(\cH)$. Then
    \begin{enumerate}
        \item  $(\cA,\cH,D_{t})_{\sigma}$ is a smooth $p$-summable twisted spectral triple for all $t\in [0,1]$. 
    
        \item  For any $q\geq \frac{1}{2}(p+1)$, the cocycles $\tau_{2q}^{D_{0},\sigma}$ and $\tau_{2q}^{D_{1},\sigma}$ are cohomologous 
        in $\mathbf{H}^{2q}_{\lambda}(\cA)$. 
    \end{enumerate}
\end{lemma}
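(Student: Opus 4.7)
The plan is to adapt the proof of Proposition~C.1 of~\cite{PW:JKT}, which handles the homotopy invariance of the Connes-Chern character in ordinary cyclic cohomology, to the continuous-cochain setting used in Lemma~\ref{lem:tauD.indep.q}.

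Part~(1) is essentially routine. Writing
\begin{equation*}
[D_t, a]_\sigma = [D, a]_\sigma + V_t a - \sigma(a) V_t,
\end{equation*}
boundedness and continuity of the map $a \mapsto [D_t, a]_\sigma$ from $\cA$ to $\cL(\cH)$ follow from the corresponding properties of $[D, \cdot]_\sigma$ and $\sigma$, together with the fact that $V_t \in \cL(\cH)$. Selfadjointness of $D_t$ gives $|D_t|^{-p} = |D_t^{-1}|^p$, so $p$-summability is exactly the hypothesis $D_t^{-1} \in \cL^p(\cH)$, and compactness of the resolvent $(D_t + i)^{-1}$ is immediate once $D_t^{-1}$ is compact.

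For part~(2), the aim is to produce a continuous cyclic cochain $\eta \in \op{\mathbf{C}}^{2q+1}_\lambda(\cA)$ satisfying $\tau^{D_1, \sigma}_{2q} - \tau^{D_0, \sigma}_{2q} = b\eta$. Using the identities
\begin{equation*}
\frac{d}{dt} D_t^{-1} = -D_t^{-1} \dot V_t D_t^{-1}, \qquad \frac{d}{dt}[D_t, a]_\sigma = [\dot V_t, a]_\sigma,
\end{equation*}
one differentiates $\tau^{D_t, \sigma}_{2q}(a^0, \ldots, a^{2q})$ to obtain a finite sum of supertraces. Applying the graded Leibniz rule for the $\sigma$-twisted derivation $d_\sigma^t := [D_t, \cdot]_\sigma$, together with cyclic invariance of $\Str$, these terms reorganize into $b\eta_t$ for a cyclic cochain $\eta_t$ whose typical term has the schematic form
\begin{equation*}
\Str\bigl( D_t^{-1}[D_t, a^0]_\sigma \cdots D_t^{-1} \dot V_t D_t^{-1}[D_t, a^{j+1}]_\sigma \cdots D_t^{-1}[D_t, a^{2q+1}]_\sigma \bigr).
\end{equation*}
The threshold $q \geq \frac{1}{2}(p+1)$ appears precisely because inserting $\dot V_t$ between two copies of $D_t^{-1}$ costs one extra unit of Schatten summability compared to the bare cocycle $\tau^{D_t,\sigma}_{2q}$.

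Continuity of $\eta_t$ in each entry then follows from H\"older's inequality for Schatten ideals, the continuity of $a \mapsto [D_t, a]_\sigma$, and the assumed uniform bound on $\|D_t^{-1}\|_p$ over $[0,1]$; the $C^1$-dependence of $V_t$ guarantees that $t \mapsto \eta_t$ is continuous as a map into $\op{\mathbf{C}}^{2q+1}_\lambda(\cA)$, so that $\eta := \int_0^1 \eta_t\,dt$ is a well-defined continuous cyclic cochain with $b\eta = \tau^{D_1,\sigma}_{2q} - \tau^{D_0, \sigma}_{2q}$. The main obstacle is the bookkeeping required to rewrite $\frac{d}{dt}\tau^{D_t, \sigma}_{2q}$ as the Hochschild coboundary of a genuinely \emph{cyclic} cochain rather than merely a Hochschild cochain, and to keep all Schatten estimates uniform in $t$ so that every step of the transgression lives inside the continuous cochain complex; the algebraic manipulations are parallel to those carried out in~\cite{PW:JKT}, and the cyclicity of $\eta_t$ ultimately reflects the $\sigma$-compatibility $\sigma(a)^* = \sigma^{-1}(a^*)$ built into the definition of a twisted spectral triple.
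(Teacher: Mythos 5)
Part~(1) of your proposal is essentially the same as the paper's argument and is correct.

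Part~(2) has a real gap. You propose to produce a \emph{cyclic} cochain $\eta$ with $\tau^{D_1,\sigma}_{2q} - \tau^{D_0,\sigma}_{2q} = b\eta$, and you yourself flag the cyclicity of $\eta$ as ``the main obstacle,'' then wave it away by appealing to cyclicity of $\Str$ and the $\sigma$-compatibility relation. In the twisted setting this obstacle is precisely the thing that does not go through: the differentiated supertrace expressions do not reorganize into the coboundary of a cyclic cochain, because the twisted commutators $[D_t,\cdot]_\sigma$ are not compatible with the cyclic permutation. What the paper (following~\cite[Prop.~C.1]{PW:JKT}) actually proves is the different identity
\begin{equation*}
\tau^{D_1,\sigma}_{2q} - \tau^{D_0,\sigma}_{2q} = B\eta,
\end{equation*}
where $\eta$ is a \emph{Hochschild} $(2q+1)$-cocycle (so $b\eta=0$) but is not cyclic, built from the operators $\delta_t(a)=D_t^{-1}[\dot V_t D_t^{-1},\sigma(a)]D_t$ and $\alpha_j^t$. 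Since this is only $B$-exactness, one cannot directly conclude cohomology in $\op{\mathbf{HC}}^{2q}$; instead the paper applies $S$ and uses the identity $S=-bB^{-1}$ on $\op{HC}^\bullet$ together with $S\tau_{2q}^{D,\sigma}=\tau_{2q+2}^{D,\sigma}$ in $\op{\mathbf{HC}}^{2q+2}$ to get
\begin{equation*}
\tau_{2q+2}^{D_1,\sigma}-\tau_{2q+2}^{D_0,\sigma}=S\bigl(\tau_{2q}^{D_1,\sigma}-\tau_{2q}^{D_0,\sigma}\bigr)=SB\eta=-b\eta=0 \quad\text{in }\op{\mathbf{HC}}^{2q+2}(\cA).
\end{equation*}
This also shows why your explanation of the threshold $q\geq\frac{1}{2}(p+1)$ is wrong: it is not a Schatten cost from inserting the bounded operator $\dot V_t$ between two $D_t^{-1}$'s (that insertion does not degrade summability), but rather the loss of one unit of $q$ coming from the $S$-shift, since the transgression formula is used one degree below. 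So you should replace the direct $b$-transgression by the $B\eta$ plus $S$ route of~\cite{PW:JKT}; your continuity argument (uniform H\"older/Schatten bounds and $C^1$-dependence in $t$) is then the genuinely new input needed to promote the conclusion from $\op{HC}^\bullet(\cA)$ to $\op{\mathbf{HC}}^\bullet(\cA)$.
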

\begin{proof}
We know from~\cite[Propositon~C.1]{PW:KJM16} that $(\cA,\cH,D_{t})_{\sigma}$ is a $p$-summable twisted spectral triple for 
all $t\in [0,1]$ and, for any $q\geq \frac{1}{2}(p+1)$, the cocycles $\tau_{2q}^{D_{0},\sigma}$ and $\tau_{2q}^{D_{1},\sigma}$ are cohomologous 
        in $H_{\lambda}^{2q}(\cA)$. Therefore, we only need to show that the twisted spectral triples $(\cA,\cH,D_{t})_{\sigma}$, $t\in [0,1]$, are 
        smooth and the cocycles $\tau_{2q}^{D_{0},\sigma}$ and $\tau_{2q}^{D_{1},\sigma}$ differ by the coboundary of a \emph{continuous} cyclic cochain.  
        
        By assumption $(\cA,\cH,D)_\sigma$ is a smooth twisted spectral triple. In particular, the representation of 
        $\cA$ in $\cL(\cH)$ is continuous and the automorphism $\sigma$ is a homeomorphism. Moreover, for all $t\in [0,1]$ and $a\in \cA$, 
        \begin{equation*}
            [D_{t},a]_{\sigma}=[D,a]_{\sigma}+V_{t}a-\sigma(a)V_{t}. 
        \end{equation*}
We then see that the map $(t,a)\rightarrow [D_{t},a]_{\sigma}$  is continuous from $[0,1]\times \cA$ to 
$\cL(\cH)$. It then follows that $(\cA,\cH,D_{t})_{\sigma}$ is a smooth twisted spectral triple for all $t\in[0,1]$. 
    
Let $q$ be an integer~$\geq \frac{1}{2}(p-1)$. In~\cite{PW:KJM16} the explicit homotopy between  $\tau_{2q}^{D_{0},\sigma}$ 
and $\tau_{2q}^{D_{1},\sigma}$ is realized as follows. For $t\in[0,1]$ and $a\in \cA$ set
\begin{equation*}
    \delta_{t}(a)=D_{t}^{-1}[\dot{V}_{t}D_{t}^{-1},\sigma(a)]D_{t}.
\end{equation*}In addition, for $j=0,\ldots,2q+1$ we set 
\begin{equation*}
    \alpha_{j}^{t}(a)=a \ \text{if $j$ is even} \quad  \text{and } \quad 
\alpha_{j}^{t}(a)=D_{t}^{-1}\sigma(a)D_{t}\ \text{if $j$ is odd}. 
\end{equation*}
We note that 
\begin{equation*}
    \delta_{t}(a)=D_{t}^{-1}(\dot{V}_{t} \alpha_{1}^{t}(a)-\sigma(a)\dot{V}_{t})\qquad \text{and} \qquad  \alpha_{1}^{t}(a)=a-D_{t}^{-1}[D_{t},a]_{\sigma}
\end{equation*}
In particular we see that the maps $(t,a)\rightarrow \alpha_{j}^{t}(a)$ are continuous from $[0,1]\times \cA$ to 
$\cL(\cH)$. Moreover, it is shown in~\cite[Appendix~C]{PW:KJM16} that $(D_{t}^{-1})_{0\leq t \leq t}$ is a $C^{1}$-family in 
$\cL^{p}(\cH)$. Therefore, we also see that the map $(t,a)\rightarrow \delta_{t}(a)$ is continuous from $[0,1]\times \cA$ to 
$\cL^{p}(\cH)$.

Bearing this mind it is shown in~\cite{PW:KJM16} that
\begin{equation}
\label{eq:tauD_1-D_0.B}
    \tau_{2q}^{D_{1},\sigma}-\tau_{2q}^{D_{0},\sigma}=B\eta,
\end{equation}where $\eta$ is the Hochschild $(2q+1)$-cocycle given by
\begin{equation*}
\eta(a^{0},\ldots,a^{2q+1})= c_{q}\sum_{j=0}^{2q+1}\int_{0}^{1}    \Str\left( \alpha_{j}(a^{0})
   D_{t}^{-1}[D_{t},a^{1}]_{\sigma}\cdots \delta_{t}(a^{j}) \cdots 
   D_{t}^{-1}[D_{t},a^{2q+1}]_{\sigma}\right)dt,
\end{equation*}where $c_{q}$ is some normalization constant. It follows from all
the previous observations and the fact that $q\geq \frac{1}{2}(p-1)$ that all the maps
\begin{equation*}
    (t,a^{0},\ldots,a^{2q+1})\rightarrow \alpha_{j}(a^{0})
   D_{t}^{-1}[D_{t},a^{1}]_{\sigma}\cdots \delta_{t}(a^{j}) \cdots 
   D_{t}^{-1}[D_{t},a^{2q+1}]_{\sigma}
\end{equation*}are continuous from $[0,1]\times \cA^{2q+2}$ to $\cL^{1}(\cH)$. It follows from the cochain $\eta$ is 
cochain. Therefore, using~(\ref{eq:tauD.CCcocycle.S}) and the fact that $\eta$ is a Hochschild cocycle we see that
\begin{equation*}
    \tau_{2q}^{D_{1},\sigma}-\tau_{2q}^{D_{0},\sigma}=(b+B)\eta \qquad \text{in $\bC_{\natural}^{2q+2}(\cA)$}.
\end{equation*}
This implies that $S(\tau_{2q}^{D_{1},\sigma}-\tau_{2q}^{D_{0},\sigma})$ is a coboundary in 
$\mathbf{H}_{\lambda}^{2q+2}(\cA)$. Combining this with~(\ref{eq:tauD.CCcocycle.S}) we then deduce that $\tau_{2q+2}^{D_{0},\sigma}$ and 
$\tau_{2q+2}^{D_{0},\sigma}$ are cohomologous in $\mathbf{H}_{\lambda}^{2q+2}(\cA)$. This proves the 2nd part of the lemma and completes the proof. 
\end{proof}

The homotopy lemma is the main ingredient in the proof of the following result. 

\begin{lemma}
\label{lem:tauD.indep.q.LCA}
Assume $D$ is invertible and let $q$ be an integer~$\geq \frac{1}{2}(p+1)$. Then the cyclic cocycles  
$\tau_{2q}^{D,\sigma}$ and $\overline{\tau}_{2q}^{D,\sigma}$ are cohomologous in $\mathbf{H}_{\lambda}^{2q}(\cA)$, and hence 
define the the same class in  $ \bHP^{0}(\cA)$.     
\end{lemma}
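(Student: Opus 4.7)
The plan is to adapt the argument from \cite{PW:JKT} for the non-continuous statement of Proposition~\ref{lem:Cochian.ConnesChernChar}(2) by replacing its use of algebraic homotopy invariance by the continuous version furnished by Lemma~\ref{lem:path.saDirac.cyclic.cocycle}. First I would form the unital invertible double $(\tilde{\cA},\tilde{\cH},\tilde{D})_{\tilde{\sigma}}$ from Section~\ref{sec:CC-character}. The unitalization $\tilde{\cA}=\cA\oplus\C$ carries the direct-sum locally convex topology, with respect to which the inclusion $\iota:\cA\hookrightarrow\tilde{\cA}$ is continuous, and a short verification (exactly as for $\tilde{\cA}$ when $\cA=C^\infty(M)$) shows that $(\tilde{\cA},\tilde{\cH},\tilde{D})_{\tilde{\sigma}}$ is a smooth $p$-summable twisted spectral triple. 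Restriction along $\iota$ is a continuous mixed-complex morphism, so it induces a well-defined pullback $\iota^{*}:\op{\mathbf{HP}}^{0}(\tilde{\cA})\to\op{\mathbf{HP}}^{0}(\cA)$.

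Second, because $D$ is invertible by assumption, I introduce the ``split'' alternative
\begin{equation*}
D_{0}:=\begin{pmatrix}D & 0 \\ 0 & -D\end{pmatrix}\ \ \text{on}\ \tilde{\cH}.
\end{equation*}
Since $D^{-1}\in\cL^{p}(\cH)$, $D_{0}$ is invertible with $D_{0}^{-p}$ trace-class, and one checks along the same lines that $(\tilde{\cA},\tilde{\cH},D_{0})_{\tilde{\sigma}}$ is a smooth $p$-summable twisted spectral triple. Because $\pi(a)=\bigl(\begin{smallmatrix}a & 0 \\ 0 & 0\end{smallmatrix}\bigr)$ and $D_{0}$ is block-diagonal, one obtains $[D_{0},\pi(a)]_{\tilde{\sigma}}=\bigl(\begin{smallmatrix}[D,a]_{\sigma} & 0 \\ 0 & 0\end{smallmatrix}\bigr)$, and taking supertrace on $\tilde{\cH}$ collapses to the supertrace on $\cH$ on the upper corner. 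Hence
\begin{equation*}
\iota^{*}\tau_{2q}^{D_{0},\tilde{\sigma}}=\tau_{2q}^{D,\sigma},
\end{equation*}
while by the very definition of $\overline{\tau}_{2q}^{D,\sigma}$ we also have $\iota^{*}\tau_{2q}^{\tilde{D},\tilde{\sigma}}=\overline{\tau}_{2q}^{D,\sigma}$.

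Third, I interpolate $D_{0}$ to $\tilde{D}$ along the affine path $D_{t}:=D_{0}+tV$, $V:=\bigl(\begin{smallmatrix}0 & 1 \\ 1 & 0\end{smallmatrix}\bigr)$, $t\in[0,1]$. A direct computation gives $D_{0}V+VD_{0}=0$, whence
\begin{equation*}
D_{t}^{2}=D_{0}^{2}+t^{2}V^{2}=\begin{pmatrix}D^{2}+t^{2} & 0 \\ 0 & D^{2}+t^{2}\end{pmatrix}.
\end{equation*}
This shows that each $D_{t}$ is invertible and $(D_{t}^{-1})_{t\in[0,1]}$ is uniformly bounded in $\cL^{p}(\tilde{\cH})$. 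All hypotheses of Lemma~\ref{lem:path.saDirac.cyclic.cocycle} are met, so for every integer $q\geq\tfrac{1}{2}(p+1)$ the cocycles $\tau_{2q}^{D_{0},\tilde{\sigma}}$ and $\tau_{2q}^{\tilde{D},\tilde{\sigma}}$ represent the same class in $\op{\mathbf{HP}}^{0}(\tilde{\cA})$. Applying $\iota^{*}$ then yields $\tau_{2q}^{D,\sigma}=\overline{\tau}_{2q}^{D,\sigma}$ in $\op{\mathbf{HP}}^{0}(\cA)$ for such $q$, and Lemma~\ref{lem:tauD.indep.q} propagates the equality to the full range $q\geq\tfrac{1}{2}(p-1)$.

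The main technical point to check carefully is the smoothness of the intermediate twisted spectral triples $(\tilde{\cA},\tilde{\cH},D_{t})_{\tilde{\sigma}}$ and the uniform $\cL^{p}$-bound on $(D_{t}^{-1})_{t\in[0,1]}$; both are immediate consequences of the block identity above, since $V$ is a bounded selfadjoint perturbation that commutes with the $\tilde{\sigma}$-structure and anti-commutes with $D_{0}$. I do not foresee any further obstacle: continuity of $\iota^{*}$ is automatic, and the rest of the argument parallels the algebraic version in \cite{PW:JKT} verbatim.
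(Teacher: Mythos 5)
Your proposal is correct and is essentially the paper's own argument: the paper simply defers to the proof in \cite{PW:JKT} combined with Lemma~\ref{lem:path.saDirac.cyclic.cocycle}, and that argument is precisely your interpolation $\tilde{D}_{t}=\bigl(\begin{smallmatrix} D & t\\ t & -D\end{smallmatrix}\bigr)$ from the split double $D\oplus(-D)$ (whose restricted cocycle is $\tau_{2q}^{D,\sigma}$) to $\tilde{D}$ (whose restricted cocycle is $\overline{\tau}_{2q}^{D,\sigma}$), followed by Lemma~\ref{lem:tauD.indep.q} to cover all $q\geq\frac{1}{2}(p-1)$. One small wording caveat: since the inclusion $\cA\hookrightarrow\tilde{\cA}$ is non-unital, restriction commutes with $b$ and $T$ but not with $B_{0}$, so $\iota^{*}$ should be invoked at the level of continuous cyclic cocycles and $\op{\mathbf{HC}}^{2q}$ classes (which is all Lemma~\ref{lem:path.saDirac.cyclic.cocycle} produces) rather than as a mixed-complex morphism; this does not affect the conclusion.
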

\begin{proof}
 We know from~\cite[\S 7]{PW:KJM16} that, for $q \geq \frac{1}{2}(p+1)$, the cocycles $\tau_{2q}^{D,\sigma}$ and 
 $\overline{\tau}_{2q}^{D,\sigma}$ define the same class in $H_{\lambda}^{2q}(\cA)$. The argument relies on the 
 homotopy of unbounded operators on $\tilde{\cH}$ given by
 \begin{equation*}
     \tilde{D}_{t}=\tilde{D}_{0}+tJ, \qquad 0\leq t \leq 1,
 \end{equation*}where we have set
 \begin{equation*}
  \tilde{D}_{0}=
  \begin{pmatrix}
      D & 0 \\
      0 & -D
  \end{pmatrix} \qquad \text{and} \qquad J=
  \begin{pmatrix}
      0 & 1 \\
      1 & 0
  \end{pmatrix}.   
 \end{equation*}
 Note that $\tilde{D}_{1}=\tilde{D}$. It is shown in~\cite[\S 7]{PW:KJM16} that this homotopy satisfies the assumptions of 
 Lemma~\ref{lem:path.saDirac.cyclic.cocycle}. Therefore, $(\tilde{\cA},\tilde{H},\tilde{D}_{t})_{\tilde{\sigma}}$ is a smooth spectral triple for all 
 $t\in [0,1]$ and, for $q \geq \frac{1}{2}(p+1)$, the cocycles $\tau^{\tilde{D}_{0},\tilde{\sigma}}_{2q}$ and 
 $\tau^{\tilde{D}_{1},\tilde{\sigma}}_{2q}$ are cohomologous in $\mathbf{H}_{\lambda}^{2q}(\tilde{\cA})$.  
 Incidentally, the  restrictions to $\cA$ of $\tau^{\tilde{D}_{0},\tilde{\sigma}}_{2q}$ and 
 $\tau^{\tilde{D}_{1},\tilde{\sigma}}_{2q}$ are cohomologous cocycles in $\mathbf{H}_{\lambda}^{2q}(\cA)$. As 
 $\tilde{D}_{1}=\tilde{D}$ the restriction of $\tau^{\tilde{D}_{1},\tilde{\sigma}}_{2q}$ is 
 $\overline{\tau}_{2q}^{D,\sigma}$. Moreover, it is shown in~\cite[\S 7]{PW:KJM16} that $\tau_{2q}^{D,\sigma}$  is the 
 restriction of $\tau^{\tilde{D}_{0},\tilde{\sigma}}_{2q}$. Therefore, we see that $\tau_{2q}^{D,\sigma}$ and 
 $\overline{\tau}_{2q}^{D,\sigma}$ are cohomologous in $\mathbf{H}_{\lambda}^{2q}(\cA)$, and hence 
define the the same class in  $\bHP^{0}(\cA)$.   The proof is complete. 
\end{proof}

Granted Lemma~\ref{lem:tauD.indep.q} and Lemma~\ref{lem:tauD.indep.q.LCA} the Connes-Chern character descends to a class in $ \op{\mathbf{HP}}^{0}(\cA)$ as follows. 

\begin{definition}
The Connes-Chern character $\bCh(D)_{\sigma}\in \op{\mathbf{HP}}^{0}(\cA)$ defined as follows:
\begin{itemize}
    \item[-] When $D$ is invertible this the common class in  $\op{\mathbf{HP}}^{0}(\cA)$ of any of the cyclic cocycles  
    $\tau_{2q}^{D, \sigma}$ or $\overline\tau_{2q}^{D, \sigma}$, $q\geq \frac{1}{2}(p-1)$. 

    \item[-] When $D$ is not invertible this the common class in  $\op{\mathbf{HP}}^{0}(\cA)$ of the cyclic cocycles  
   $\overline\tau_{2q}^{D, \sigma}$, $q\geq \frac{1}{2}(p-1)$. 
\end{itemize}
\end{definition}

\begin{proposition}\label{prop:CCC-index-formula2}
    Let $(\cA,\cH,D)_{\sigma}$ be a $p$-summable smooth twisted spectral triple. Then
\begin{enumerate}
 \item  The class $\bCh(D)_{\sigma}$  agrees with the Connes-Chern character 
$\Ch(D)_{\sigma}$  under the morphism $\op{\mathbf{HP}}^{0}(\cA)\rightarrow \op{HP}^{0}(\cA)$ 
 induced by the inclusion of $\mathbf{C}^{\bt}_{\sharp}(\cA)$ into $C^{\bt}_{\sharp}(\cA)$.

\item For any Hermitian finitely generated projective module $\cE$ over $\cA$ 
    and $\sigma$-connection $\nabla^{\cE}$ on $\cE$, we have
\begin{equation*}
\ind D_{\nabla^{\cE}}=\acou{\bCh(D)_{\sigma}}{\bCh(\cE)},
\end{equation*}where $\bCh(\cE)$ is the Chern character of $\cE$ seen as a class in 
$\op{\mathbf{HP}}_{0}(\cA)$. 
\end{enumerate}
\end{proposition}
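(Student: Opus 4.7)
The plan is to derive both parts directly from the definitions, treating the proposition as a naturality/bookkeeping statement built on Lemma~\ref{lem:tauD.indep.q} and Lemma~\ref{lem:tauD.indep.q.LCA}, which carry the only analytic content.

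For part~(1), I would simply unpack the definitions. By Lemma~\ref{lem:tauD.indep.q}, the continuous cyclic cocycles $\overline{\tau}_{2q}^{D,\sigma}$, $q\geq \frac{1}{2}(p-1)$, all define the same class in $\op{\mathbf{HP}}^{0}(\cA)$, and when $D$ is invertible Lemma~\ref{lem:tauD.indep.q.LCA} guarantees this class coincides with the common class of each $\tau_{2q}^{D,\sigma}$; by construction, $\bCh(D)_{\sigma}$ is precisely this class. The very same cocycles, regarded now as arbitrary cochains via the inclusion $\mathbf{C}^{\bt}(\cA)\hookrightarrow C^{\bt}(\cA)$, are the representatives of $\Ch(D)_{\sigma}$ appearing in Proposition~\ref{lem:Cochian.ConnesChernChar}. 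Functoriality of cohomology applied to the inclusion of cochain complexes $\mathbf{C}^{[\bt]}(\cA)\hookrightarrow C^{[\bt]}(\cA)$ then immediately sends $\bCh(D)_{\sigma}$ to $\Ch(D)_{\sigma}$.

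For part~(2), I would combine part~(1) with the index formula of Proposition~\ref{thm:CCC-index-formula} and the compatibility of the two dualities. Recall that $\bCh(\cE)\in \op{\mathbf{HP}}_{0}(\cA)$ is by definition the image of $\Ch(\cE)\in\op{HP}_{0}(\cA)$ under the natural map $\op{HP}_{0}(\cA)\to\op{\mathbf{HP}}_{0}(\cA)$ induced by $C_{\bt}(\cA)\hookrightarrow \mathbf{C}_{\bt}(\cA)$; concretely, both classes are represented by the same Hochschild/cyclic chain $\Ch(e)$ attached to an idempotent $e\in M_{N}(\cA)$ with $\cE\simeq e\cA^{N}$. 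Because the duality pairing~(\ref{eq:pairing.cyclic.cycle.cocycle.normalized}) is defined by evaluation of a cochain on a chain, it factors through the inclusions on both sides, giving the naturality relation
\begin{equation*}
\acou{\bCh(D)_{\sigma}}{\bCh(\cE)}=\acou{\Ch(D)_{\sigma}}{\Ch(\cE)}.
\end{equation*}
Combining this with Proposition~\ref{thm:CCC-index-formula} yields $\ind D_{\nabla^{\cE}}=\acou{\bCh(D)_{\sigma}}{\bCh(\cE)}$, which is part~(2).

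There is no real obstacle: once the continuity and $q$-independence of the cocycles have been established in Lemmas~\ref{lem:tauD.indep.q} and~\ref{lem:tauD.indep.q.LCA}, the entire proposition reduces to checking that the cochain-level inclusion $\mathbf{C}^{\bt}(\cA)\hookrightarrow C^{\bt}(\cA)$ and the chain-level inclusion $C_{\bt}(\cA)\hookrightarrow \mathbf{C}_{\bt}(\cA)$ are compatible with the duality pairing, which is immediate from the formulas. The only subtlety worth being explicit about is that when $D$ is not invertible one must define $\bCh(\cE)$ through the algebraic Chern character (since $\Ch(e)$ is a \emph{finite} sum of elementary tensors and a priori lives in $C_{[0]}(\cA)$) and then push forward to $\op{\mathbf{HP}}_{0}(\cA)$; this is precisely what the definition $\bCh(\cE)=\iota_{*}\Ch(\cE)$ prescribes, so the pairing on the continuous side computes the same number as on the algebraic side.
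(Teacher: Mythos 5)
Your proposal is correct and follows essentially the same route as the paper: part (1) is immediate because $\bCh(D)_{\sigma}$ and $\Ch(D)_{\sigma}$ are represented by the very same cocycles $\overline{\tau}_{2q}^{D,\sigma}$ (resp.\ $\tau_{2q}^{D,\sigma}$), and part (2) follows by evaluating these cocycles on the chain $\Ch(e)$, so that $\acou{\bCh(D)_{\sigma}}{\bCh(\cE)}=\acou{\Ch(D)_{\sigma}}{\Ch(\cE)}$, and then invoking Proposition~\ref{thm:CCC-index-formula}. The compatibility of the pairing with the cochain/chain inclusions that you spell out is exactly the content of the paper's one-line computation.
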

\begin{proof}
The first part is immediate since by their very definitions, $\bCh(D)_{\sigma}$ and $\Ch(D)_{\sigma}$ are represented by the same cyclic cocycles. 
As for the 2nd part, consider an idempotent $e$ in some $M_{N}(\cA)$, $N\geq 1$ such that $\cE\simeq e\cA^{N}$. Then, for any $q\geq \frac{1}{2}(p-1)$,
\begin{equation*}
    \acou{\bCh(D)_{\sigma}}{\bCh(\cE)}= \acou{\overline{\tau}_{2q}^{D,\sigma}}{\Ch(e)}=\acou{\Ch(D)_{\sigma}}{\Ch(\cE)}. 
\end{equation*}Combining this with Proposition~\ref{thm:CCC-index-formula} gives the result. 
\end{proof}

\begin{remark}
 In what follows by the Connes-Chern character of a smooth ($p$-summable) twisted spectral triple $(\cA,\cH,D)_{\sigma}$ we shall 
 mean the cohomology class $\bCh(D)_{\sigma}\in  \op{\mathbf{HP}}^{0}(\cA)$.    
\end{remark}

\section{Invariance of the Connes-Chern Character}\label{sec:Invariance-CC-character}
In preparation for the next section, we prove in this section the invariance of the Connes-Chern character under 
equivalences and conformal deformations of twisted spectral triples. 

\subsection{Equivalence of twisted spectral triples} The equivalence of two twisted spectral triples over the same 
algebra is defined as follows. 

\begin{definition}
    Let $(\cA,\cH_{1},D_{1})_{\sigma_{1}}$ and $(\cA,\cH_{2},D_{2})_{\sigma_{2}}$ be twisted spectral triples over the 
    same algebra. For $i=1,2$ let us denote by $\pi_{i}$ the representation of $\cA$ into $\cH_{i}$. Then we say that $(\cA,\cH_{1},D_{1})_{\sigma_{1}}$ and 
    $(\cA,\cH_{2},D_{2})_{\sigma_{2}}$ are 
    equivalent when there is a unitary operator $U:\cH_{1}\rightarrow \cH_{2}$ such that 
    \begin{gather}
      D_{1}=  U^{*}D_{2}U, \label{eq:UniEquivOp}\\
      \pi_{1}(a)=U^{*}\pi_{2}(a)U \quad \text{and}\quad  
      \pi_{1}(\sigma_{1}(a))=U^{*}\pi_{2}(\sigma_{2}(a))U \quad \text{for all $a\in \cA$}.
      \label{eq:UniEquiv}
    \end{gather}
\end{definition}

\begin{remark}
    We may also define the equivalence of a pair of spectral triples $(\cA_{1},\cH_{1},D_{1})_{\sigma_{1}}$ and 
    $(\cA_{2},\cH_{2},D_{2})_{\sigma_{2}}$ with $\cA_{1}\neq \cA_{2}$ by requiring the existence of a $*$-algebra 
    isomorphism $\psi:\cA_{1}\rightarrow \cA_{2}$ and replacing the condition~(\ref{eq:UniEquiv}) by
    \begin{equation*}
         \pi_{1}(a)=U^{*}\pi_{2}\left(\psi(a)\right)U \quad \text{and}\quad  
      \pi_{1}(\sigma_{1}(a))=U^{*}\pi_{2}\left(\sigma_{2}\left(\psi(a)\right)\right)U \quad \text{for all $a\in \cA_{1}$}.
    \end{equation*}
\end{remark}

\begin{proposition}
\label{prop:equiv.TST.Same.CC}
    Let $(\cA,\cH_{1},D_{1})_{\sigma_{1}}$ and $(\cA,\cH_{2},D_{2})_{\sigma_{2}}$ be equivalent twisted spectral 
    triples that are $p$-summable for some $p\geq 1$. In addition, let $q$ be an integer~$\geq \frac{1}{2}(p-1)$. Then 
    \begin{enumerate}
        \item  The cyclic cocycles $\overline{\tau}_{2q}^{D_{1},\sigma_{1}}$ and 
        $\overline{\tau}_{2q}^{D_{2},\sigma_{2}}$ agree.

        \item The same result holds for the cocycles $\tau_{2q}^{D_{1},\sigma_{1}}$ and $\tau_{2q}^{D_{2},\sigma_{2}}$ 
        when $D_{1}$ and $D_{2}$ are invertible. 
    
        \item  The twisted spectral triples $(\cA,\cH_{1},D_{1})_{\sigma_{1}}$ and $(\cA,\cH_{2},D_{2})_{\sigma_{2}}$ 
        have the same Connes-Chern character. 
    \end{enumerate}
\end{proposition}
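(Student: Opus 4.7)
The plan is to show, by direct computation, that the very cochains $\tau_{2q}^{D_{i},\sigma_{i}}$ and $\overline{\tau}_{2q}^{D_{i},\sigma_{i}}$ (not merely their cohomology classes) agree for $i=1,2$. Thus parts (1)--(3) follow without any cohomological argument. First I would unpack the equivalence relations~(\ref{eq:UniEquivOp})--(\ref{eq:UniEquiv}): assuming $U$ is grading-preserving (as is implicit in any equivalence of $\Z_{2}$-graded spectral triples, since $D_{i}$ is odd and $\pi_{i}$ is even), one gets
\begin{equation*}
    [D_{1},\pi_{1}(a)]_{\sigma_{1}}=D_{1}\pi_{1}(a)-\pi_{1}(\sigma_{1}(a))D_{1}= U^{*}[D_{2},\pi_{2}(a)]_{\sigma_{2}}U \qquad \forall a\in \cA,
\end{equation*}
and, when $D_{1}$ (equivalently $D_{2}$) is invertible, $D_{1}^{-1}=U^{*}D_{2}^{-1}U$ as well.

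For part~(2), assuming $D_{1}$ and $D_{2}$ are invertible, these identities telescope to give
\begin{equation*}
    D_{1}^{-1}[D_{1},\pi_{1}(a^{0})]_{\sigma_{1}}\cdots D_{1}^{-1}[D_{1},\pi_{1}(a^{2q})]_{\sigma_{1}}= U^{*}\bigl(D_{2}^{-1}[D_{2},\pi_{2}(a^{0})]_{\sigma_{2}}\cdots D_{2}^{-1}[D_{2},\pi_{2}(a^{2q})]_{\sigma_{2}}\bigr)U.
\end{equation*}
Since $U$ intertwines the two $\Z_{2}$-gradings, we have $\Str_{\cH_{1}}(U^{*}TU)=\Str_{\cH_{2}}(T)$ for any trace-class $T\in \cL^{1}(\cH_{2})$. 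Applying this to the above product (which is trace-class since $q\geq \tfrac{1}{2}(p-1)$) shows that the two cocycles $\tau_{2q}^{D_{1},\sigma_{1}}$ and $\tau_{2q}^{D_{2},\sigma_{2}}$ are literally equal as multilinear forms on $\cA^{2q+1}$.

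For part~(1), I would reduce to the invertible case by passing to the invertible doubles. Define $\tilde{U}:\tilde{\cH}_{1}\rightarrow \tilde{\cH}_{2}$ by $\tilde{U}:=U\oplus U$. A direct matrix computation shows $\tilde{D}_{1}=\tilde{U}^{*}\tilde{D}_{2}\tilde{U}$, and the representations $\tilde{\pi}_{i}(a)=\pi_{i}(a)\oplus 0$ and extended automorphisms $\tilde{\sigma}_{i}$ satisfy the analogues of~(\ref{eq:UniEquiv}). The operator $\tilde{U}$ also intertwines the gradings $\tilde{\gamma}_{i}=\gamma_{i}\oplus(-\gamma_{i})$. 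Hence $(\tilde{\cA},\tilde{\cH}_{1},\tilde{D}_{1})_{\tilde{\sigma}_{1}}$ and $(\tilde{\cA},\tilde{\cH}_{2},\tilde{D}_{2})_{\tilde{\sigma}_{2}}$ are equivalent twisted spectral triples with \emph{invertible} Dirac operators, so part~(2) applied to them yields $\tau_{2q}^{\tilde{D}_{1},\tilde{\sigma}_{1}}=\tau_{2q}^{\tilde{D}_{2},\tilde{\sigma}_{2}}$. Restricting both sides to $\cA^{2q+1}\subset \tilde{\cA}^{2q+1}$ gives $\overline{\tau}_{2q}^{D_{1},\sigma_{1}}=\overline{\tau}_{2q}^{D_{2},\sigma_{2}}$.

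Finally, part~(3) is immediate: by Proposition~\ref{lem:Cochian.ConnesChernChar}, $\Ch(D_{i})_{\sigma_{i}}$ is the class in $\op{HP}^{0}(\cA)$ represented by $\overline{\tau}_{2q}^{D_{i},\sigma_{i}}$ (or by $\tau_{2q}^{D_{i},\sigma_{i}}$ in the invertible case), and these cocycles were just shown to be equal. No serious obstacle is expected; the argument is essentially bookkeeping with the trace property. The only subtle point worth stating explicitly is the grading compatibility of $U$, which one either builds into the definition of equivalence or extracts from the oddness of $D_{i}$ and the evenness of $\pi_{i}$.
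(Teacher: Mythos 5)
Your proof is correct and follows essentially the same strategy as the paper: reduce to the invertible case via the equivalence $U\oplus U$ of the invertible doubles, establish the intertwining identity $D_1^{-1}[D_1,a]_{\sigma_1}=U^*D_2^{-1}[D_2,a]_{\sigma_2}U$, and telescope inside the supertrace to get literal equality of the cocycles. Your explicit observation that $U$ must intertwine the $\Z_2$-gradings for $\Str_{\cH_1}(U^*TU)=\Str_{\cH_2}(T)$ to hold is a welcome precision that the paper leaves implicit in its definition of equivalence.
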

\begin{proof}
      The last part is an immediate consequence of the first two parts, so we only need to prove these two parts. In addition, 
      we note that 
      the invertible doubles of $(\cA,\cH_{1},D_{1})_{\sigma_{1}}$ and $(\cA,\cH_{2},D_{2})_{\sigma_{2}}$ are 
      equivalent, where the equivalence is implemented by the unitary operator $U\oplus U$ acting on 
      $\tilde{\cH}=\cH\oplus \cH$. Therefore, it is enough to assume that $D_{1}$ and $D_{2}$ are invertible and prove 
      the 2nd part.
      
      Under the aforementioned assumption and using~(\ref{eq:UniEquivOp})--(\ref{eq:UniEquiv}) 
      we see that,  for all $a \in \cA$, 
      \begin{equation*}
          D_{1}^{-1}[D_{1},a]_{\sigma_{1}}=U^{*}D_{2}^{-1}[D_{2},a]_{\sigma_{2}}U.
      \end{equation*}
       Therefore, for all $a^{j}\in \cA$, we have 
      \begin{align*}
         \tau_{2q}^{D_{1},\sigma_{1}}(a^{0},\ldots,a^{2q})& =  \frac{1}{2}(-1)^q \frac{q!}{(2q)!} 
     \Str \left\{ U^{*}D_{2}^{-1}[D_{2}, a^0]_{\sigma_{2}}U\cdots U^{*}D_{2}^{-1}[D_{2}, a^{2q}]_{\sigma_{2}}U\right\}\\ 
     & =  \tau_{2q}^{D_{2},\sigma_{2}}(a^{0},\ldots,a^{2q}).
      \end{align*}This proves the result.
  \end{proof}   

\subsection{Conformal deformations of twisted spectral triples} For future purpose it will be useful to 
extend to the setting of twisted spectral triples the conformal deformations of ordinary spectral triples. Let $(\cA,\cH,D)_{\sigma}$ be a twisted spectral triple and 
$k$ a positive element of $\cA$. We let $\hat{\sigma}:\cA\rightarrow \cA$ be the automorphism of $\cA$ given by
\begin{equation*}
    \hat{\sigma}(a)=k\sigma(kak^{-1})k^{-1} \qquad \forall a \in \cA.
\end{equation*}

\begin{proposition} $\left( \cA,\cH, kDk\right)_{\hat{\sigma}}$ is a twisted spectral triple. 
\end{proposition}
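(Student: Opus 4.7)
The plan is to verify the four clauses of Definition~\ref{TwistedSpectralTriple} for the triple $(\cA,\cH,kDk)_{\hat\sigma}$, keeping in mind that the grading on $\cH$ and the representation of $\cA$ are unchanged, so clauses (1) and (2) are immediate. The bulk of the work is to check the automorphism identity for $\hat\sigma$, the selfadjointness and compact resolvent of $kDk$, and the boundedness of the twisted commutator.

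First I would handle the algebraic condition (3). The clean way is to introduce the inner automorphism $\alpha:\cA\to\cA$, $\alpha(a)=kak^{-1}$, so that one checks $\hat\sigma=\alpha\circ\sigma\circ\alpha$ directly from the definition. Since $k$ is positive (hence selfadjoint in $\cA$), one has $\alpha(a)^{*}=(kak^{-1})^{*}=k^{-1}a^{*}k=\alpha^{-1}(a^{*})$, and combining this with the hypothesis $\sigma(a)^{*}=\sigma^{-1}(a^{*})$ yields $\hat\sigma(a)^{*}=\hat\sigma^{-1}(a^{*})$ by a short calculation. In particular $\hat\sigma$ is an automorphism.

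Next I would establish clause (4). Oddness of $kDk$ is immediate: $k\in\cA$ is even, $D$ is odd, so $kDk$ maps $\cH^{\pm}\cap\dom D$ into $\cH^{\mp}$. For the operator-theoretic properties, since both $k$ and $k^{-1}$ lie in $\cA$, condition (4)(b) of the original triple gives $k(\dom D)=\dom D$; consequently $kDk$ is naturally defined on $\dom D$ and is symmetric there. Selfadjointness follows from an elementary adjoint computation: if $\eta\in\dom((kDk)^{*})$, writing $\langle D(k\xi),k\eta\rangle=\langle k^{-1}\xi,(kDk)^{*}\eta\rangle\|k\|$-boundedly in $\xi$, one concludes $k\eta\in\dom(D^{*})=\dom D$, hence $\eta\in\dom D$. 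For the compact resolvent, I would use $k$ bounded and invertible: writing $(kDk-\lambda)^{-1}=k^{-1}(D-\lambda k^{-2})^{-1}k^{-1}$ for $\lambda$ purely imaginary with $|\lambda|$ large, and noting that $D-\lambda k^{-2}$ is a bounded perturbation of $D$ that remains invertible (because its imaginary part is a strictly positive bounded operator), compactness of $(D+i)^{-1}$ transfers to $(kDk-\lambda)^{-1}$ since the compact operators form an ideal.

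Finally, for the twisted-commutator condition, the computation is short and is the heart of why the formula for $\hat\sigma$ is tailored as it is. Setting $b=kak^{-1}\in\cA$, one has $ka=bk$, so
\begin{equation*}
[kDk,a]_{\hat\sigma}=kDk\,a-k\sigma(b)k^{-1}\cdot kDk=k\bigl(D(bk)-\sigma(b)Dk\bigr)=k\,[D,b]_{\sigma}\,k,
\end{equation*}
which is bounded by boundedness of $[D,kak^{-1}]_{\sigma}$. The $\hat\sigma$-stability of $\dom(kDk)=\dom D$ under multiplication by $a\in\cA$ is inherited from the analogous property of $(\cA,\cH,D)_{\sigma}$. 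The main obstacle is the self-adjointness/compact-resolvent step, since unbounded operators do not in general interact well with conjugation by bounded operators; but the positivity and invertibility of $k$ in $\cA$, together with the compactness ideal property, make the argument go through.
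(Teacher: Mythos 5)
Your proof is correct and its key step, the identity $[kDk,a]_{\hat{\sigma}}=k\,[D,kak^{-1}]_{\sigma}\,k$, is exactly the computation the paper gives; the paper simply declares the remaining clauses (selfadjointness and compact resolvent of $kDk$, oddness, and the identity $\hat{\sigma}(a)^{*}=\hat{\sigma}^{-1}(a^{*})$) to need no verification, whereas you spell them out. So this is essentially the paper's proof, with the routine operator-theoretic and algebraic checks made explicit.
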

\begin{proof}
    We only need to check the boundedness of the twisted commutators $[kDk,a]_{\hat{\sigma}}$, $a\in \cA$. To see this we 
    note that 
    \begin{equation}
    \label{eq:compare.twisted.commutator}
     [kDk,a]_{\hat{\sigma}} =  kDka-\hat{\sigma}(a)kDk = k\left( D(kak^{-1})-(k^{-1}\hat{\sigma}(a)k)D\right)k  = k[D,a]_{\sigma}k.
    \end{equation}
As the twisted commutator $[D,a]_{\sigma}$ is bounded, it follows that $[kDk,a]_{\hat{\sigma}}$ is bounded as well. The proof is 
complete.  
\end{proof}

The following shows that the Connes-Chern character is invariant under conformal deformations. 

\begin{proposition}
\label{prop:conf.Dirac.same.Ch}
    Assume that $(\cA,\cH,D)_{\sigma}$ is $p$-summable for some $p\geq 1$. Then, for any positive element $k\in \cA$, 
    we have
    \begin{equation*}
        \Ch(kDk)_{\hat{\sigma}}= \Ch(D)_{\sigma}\in \op{HP}^{0}(\cA).
    \end{equation*}
\end{proposition}
\begin{remark}
The above result is proved in~\cite{CM:TGNTQF} in the special case $\sigma=\op{id}$ and $D$ is invertible. 
\end{remark}
\begin{proof}[Proof of Proposition~\ref{prop:conf.Dirac.same.Ch}]
 Set $D_{k}=kD{k}$. We shall first prove the result when $D$ is invertible, as the proof is simpler in that 
 case. Given an integer~$q>\frac{1}{2}(p-1)$ 
 let $a^{j}\in \cA$, $j=0,\ldots,2q$. Using~(\ref{eq:tau_2k}) and~(\ref{eq:compare.twisted.commutator}) we get
 \begin{align}
     \tau_{2q}^{D_{k},\hat{\sigma}}(a^{0},\ldots,a^{2q})= & c_{q}
     \Str \left\{ D_{k}^{-1}[D_{k}, a^0]_{\hat{\sigma}}\cdots D_{k}^{-1}[D_{k}, a^{2q}]_{\hat{\sigma}}\right\}\\
     = &  c_{q} \Str \left\{ (k^{-1}D^{-1}[D, ka^0k^{-1}]_{\sigma}k)\cdots (k^{-1}D^{-1}[D, ka^{2q}k^{-1}]_{\sigma}k)\right\}\\
     = &  c_{q} \Str \left\{ D^{-1}[D, ka^0k^{-1}]_{\sigma}\cdots D^{-1}[D, ka^{2q}k^{-1}]_{\sigma}\right\}\\
     = & \tau_{2q}^{D,\sigma}\left(ka^{0}k^{-1},\cdots, ka^{2q}k^{-1}\right). 
      \label{eq:tau_2q.D_k.D}
 \end{align}
 As cyclic cohomology is invariant under the action of inner automorphisms (see~\cite[Prop.\ III.1.8]{Co:NCG} 
 and~\cite[Prop.\ 4.1.3]{Lo:CH}), 
 we deduce that the cyclic cocycles $\tau_{2q}^{D,\sigma}$ and $\tau_{2q}^{D_{k},\hat{\sigma}}$ are cohomologous in 
 $H_{\lambda}^{2q}(\cA)$. Therefore, they 
 define the same class in $\op{HP}^{0}(\cA)$, and so the twisted spectral 
triples $(\cA,\cH,D)_{\sigma}$ and $(\cA,\cH,D_{k})_{\hat{\sigma}}$ have the same Connes-Chern character.
  
Let us now prove the result when $D$ is not invertible. To this end consider the respective 
unital invertible doubles $(\tilde{\cA},\tilde{\cH},\tilde{D})_{\sigma}$ and 
$(\tilde{\cA},\tilde{\cH},\tilde{D}_{k})_{\hat{\sigma}}$ of $(\cA,\cH,D)_{\sigma}$ and its conformal deformation 
$(\cA,\cH,D_{k})_{\hat{\sigma}}$, where by a slight abuse of notation we have denoted by $\sigma$ and $\hat{\sigma}$ 
the extensions to $\tilde{\cA}$ of the automorphisms $\sigma$ and $\hat{\sigma}$. As it turns out, 
$(\tilde{\cA},\tilde{\cH},\tilde{D}_{k})_{\hat{\sigma}}$ is not a conformal deformation of  
$(\tilde{\cA},\tilde{\cH},\tilde{D})_{\sigma}$ so that the proof in the invertible case does not extend to the 
invertible doubles. Nevertheless, as we shall see, $(\tilde{\cA},\tilde{\cH},\tilde{D}_{k})_{\hat{\sigma}}$ is a pseudo-inner twisting in the sense of~\cite{PW:AIM15} of 
a twisted spectral triple which is homotopy equivalent to $(\tilde{\cA},\tilde{\cH},\tilde{D})_{\sigma}$.

To wit consider the selfadjoint unbounded operator on $\tilde{\cH}=\cH\oplus \cH$ given by 
\begin{equation*}
    \tilde{D}_{1}=
    \begin{pmatrix}
        D & k^{-2} \\
        k^{-2} & -D
    \end{pmatrix}, \qquad \dom(\tilde{D}_{1})=\dom(D)\oplus \dom(D).
\end{equation*}
As $\tilde{D}_{1}$ agrees with $\tilde{D}$ up to a bounded operator, we see that 
$(\tilde{\cA},\tilde{\cH},\tilde{D}_{1})_{\sigma}$ is a $p$-summable twisted spectral triple. Note also that 
\begin{equation}
\label{eq:D_k&D_1}
    \tilde{D}_{k}= 
    \begin{pmatrix}
        kDk & 1 \\
        1 & -kDk
    \end{pmatrix}= \omega \tilde{D}_{1}\omega, \qquad \text{where } \omega:=Â«
    \begin{pmatrix}
        k & 0 \\
        0 & k
    \end{pmatrix}.
\end{equation}In particular, we see that $\tilde{D}_{1}$ is invertible. An explicit homotopy between 
$(\tilde{\cA},\tilde{\cH},\tilde{D})_{\sigma}$ and $(\tilde{\cA},\tilde{\cH},\tilde{D}_{1})_{\sigma}$ is given by the 
family of twisted spectral triples $(\tilde{\cA},\tilde{\cH},\tilde{D}_{t})_{\sigma}$, $t\in [0,1]$, with 
\begin{equation}
\label{eq:homotopy.D.D_1}
     \tilde{D}_{t}=
    \begin{pmatrix}
        D & k^{-2t} \\
        k^{-2t} & -D
    \end{pmatrix} =\tilde{D}+V_{t},  \qquad \text{where } V_{t}=
    \begin{pmatrix}
        0 & k^{-2t}-1 \\
        k^{-2t}-1 & 0
    \end{pmatrix}.
\end{equation}
We observe that $(V_{t})_{t\in [0,1]}$ is a $C^{1}$-family in $\cL(\tilde{\cH})$ and, in the same way as for 
$\tilde{D}_{1}$, it can be shown that the operator $\tilde{D}_{t}$ is invertible for all $t\in [0,1]$. Therefore, we may 
use the homotopy invariance of the Connes-Chern character in the form of~\cite[Appendix C]{PW:KJM16} to deduce that, for all 
$q\geq \frac{1}{2}(p+1)$, the cyclic cocycles $\tau^{\tilde{D},\sigma}_{2q}$ and $\tau^{\tilde{D}_1,\sigma}_{2q}$ are 
cohomologous in $H_{\lambda}^{2q}(\tilde{\cA})$. Incidentally, if denote by $\overline{\tau}^{\tilde{D},\sigma}_{2q}$ 
and $\overline{\tau}^{\tilde{D}_1,\sigma}_{2q}$ their respective restrictions to $\cA$, then we see that $\overline{\tau}^{\tilde{D},\sigma}_{2q}$ 
and $\overline{\tau}^{\tilde{D}_1,\sigma}_{2q}$ are cohomologous cyclic cocycles in $H_{\lambda}^{2q}(\cA)$. 

Bearing this in mind, let $a\in \cA$. Using~(\ref{eq:D_k&D_1}) and noting that $\omega \tilde{\pi}(a)\omega^{-1}=\tilde{\pi}(kak^{-1})$, we see that
$ \tilde{D}_{k}\tilde{\pi}(a)=   \omega \tilde{D}_{1} (\omega \tilde{\pi}(a)\omega^{-1})\omega=\omega 
 \tilde{D}_{1}\tilde{\pi}(kak^{-1})\omega$. 
Likewise, 
\begin{equation*}
    \tilde{\pi}(\hat{\sigma}(a))\tilde{D}_{k}=\omega(\omega^{-1}\tilde{\pi}(\hat{\sigma}(a))\omega)\tilde{D}_{1}\omega= 
    \omega\tilde{\pi}(k^{-1}\hat{\sigma}(a)k)\tilde{D}_{1}\omega= 
    \omega \tilde{\pi}(\sigma(kak^{-1})) \tilde{D}_{1}\omega.
\end{equation*}Thus,
\begin{equation}
\label{eq:arguingas}
   \tilde{D}_{k}^{-1}[\tilde{D}_{k},\tilde{\pi}(a)]_{\hat{\sigma}}= 
   \left(\omega^{-1}\tilde{D}_{1}^{-1}\omega^{-1}\right) \left(\omega [\tilde{D}_{1},\tilde{\pi}(kak^{-1})]_{\sigma}\omega\right)
   =\omega^{-1}\tilde{D}_{1}^{-1} [\tilde{D}_{1},\tilde{\pi}(kak^{-1})]_{\sigma}\omega. 
\end{equation}
Given an integer~$q>\frac{1}{2}(p-1)$, let $a^{j}\in \cA$, $j=0,\ldots,2q$. Using~(\ref{eq:tau_2k}) and~(\ref{eq:arguingas}) and arguing as 
in~(\ref{eq:tau_2q.D_k.D}) we obtain
\begin{align}
    \overline{\tau}_{2q}^{\tilde{D}_{k},\hat{\sigma}}(a^{0},\ldots,a^{2q}) 
    = & c_{q}\Str \left\{ 
    \tilde{D}_{k}[\tilde{D}_{k},\tilde{\pi}(a^{0})]_{\hat{\sigma}}\cdots   
    \tilde{D}_{k}[\tilde{D}_{k},\tilde{\pi}(a^{2q})]_{\hat{\sigma}}\right\} \\
    =  & c_{q}\Str \left\{ 
   (\omega^{-1} \tilde{D}_{1}[\tilde{D}_{1},\tilde{\pi}(ka^{0}k^{-1})]_{\sigma}\omega )\cdots    (\omega^{-1} 
   \tilde{D}_{1}[\tilde{D}_{1},\tilde{\pi}(ka^{2q}k^{-1})]_{\sigma}\omega )\right\} \\
    =  & c_{q}\Str \left\{ 
   \tilde{D}_{1}[\tilde{D}_{1},\tilde{\pi}(ka^{0}k^{-1})]_{\sigma}\cdots     
   \tilde{D}_{1}[\tilde{D}_{1},\tilde{\pi}(ka^{2q}k^{-1})]_{\sigma}\right\} \\   
    = &  \overline{\tau}_{2q}^{\tilde{D}_{1},\sigma}(ka^{0}k^{-1},\ldots,ka^{2q}k^{-1}).
    \label{eq:tau_2q.D_k.D.not.invertible}
\end{align}
Therefore, in the same way as in the invertible case, we deduce that the cocycles $ 
\overline{\tau}_{2q}^{\tilde{D}_{k},\hat{\sigma}}$ and $ \overline{\tau}_{2q}^{\tilde{D}_{1},\sigma}$ are cohomologous 
in $H_{\lambda}^{2q}(\cA)$. It then follows that, for $q\geq \frac{1}{2}(p+1)$, the cocycles $ 
\overline{\tau}_{2q}^{\tilde{D},\sigma}$ and $ \overline{\tau}_{2q}^{\tilde{D}_{k},\hat{\sigma}}$ are cohomologous in 
$H_{\lambda}^{2q}(\cA)$, and hence define the same class in $\op{HP}^{0}(\cA)$. This shows that the twisted spectral 
triples $(\cA,\cH,D)_{\sigma}$ and $(\cA,\cH,D_{k})_{\hat{\sigma}}$ have same Connes-Chern character. The proof is 
complete. 
\end{proof}

\subsection{Smooth twisted spectral triples} We shall now explain how to extend to smooth twisted spectral triples the 
previous results of this section. First, we have the following result.

\begin{proposition}
    Let $(\cA,\cH_{1},D_{1})_{\sigma_{1}}$ and $(\cA,\cH_{2},D_{2})_{\sigma_{2}}$ be equivalent smooth twisted spectral 
    triples that are $p$-summable for some $p\geq 1$. Then $\bCh(D_{1})_{\sigma_{1}}=\bCh(D_{2})_{\sigma_{2}}$ in 
    $\op{\mathbf{HP}}^{0}(\cA)$. 
\end{proposition}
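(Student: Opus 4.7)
The plan is to leverage Proposition~\ref{prop:equiv.TST.Same.CC} directly, since that result already establishes equality of the defining cocycles at the level of cochains, not merely at the level of cohomology. Once the cocycles coincide as cochains, they \emph{a fortiori} define the same class in any cohomology theory in which they happen to live — in particular in $\op{\mathbf{HP}}^{0}(\cA)$, provided we verify they are continuous. The only real content beyond Proposition~\ref{prop:equiv.TST.Same.CC} is therefore a continuity check, which is supplied by Lemma~\ref{lem:tauD.indep.q} together with the smoothness hypothesis on both spectral triples.

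More concretely, first I would pick an integer $q \geq \frac{1}{2}(p-1)$. By Proposition~\ref{prop:equiv.TST.Same.CC}(1), the cyclic cocycles $\overline{\tau}_{2q}^{D_{1},\sigma_{1}}$ and $\overline{\tau}_{2q}^{D_{2},\sigma_{2}}$ on $\cA$ are \emph{equal} as multilinear forms (not merely cohomologous), since the equivalence unitary $U:\cH_{1}\rightarrow \cH_{2}$ intertwines both the representations, the twisting automorphisms, and the Dirac operators, so the trace formula defining $\overline{\tau}_{2q}$ is preserved on the nose. Since $(\cA,\cH_{1},D_{1})_{\sigma_{1}}$ is a smooth $p$-summable twisted spectral triple, Lemma~\ref{lem:tauD.indep.q} applies and shows that $\overline{\tau}_{2q}^{D_{1},\sigma_{1}}\in \mathbf{C}^{2q}(\cA)$, and its class in $\op{\mathbf{HP}}^{0}(\cA)$ is by definition $\bCh(D_{1})_{\sigma_{1}}$. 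The same holds for side $2$. As the two cochains agree literally as elements of $\mathbf{C}^{2q}(\cA)$, they represent the same class in $\op{\mathbf{HP}}^{0}(\cA)$, yielding $\bCh(D_{1})_{\sigma_{1}}=\bCh(D_{2})_{\sigma_{2}}$.

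There is essentially no obstacle beyond bookkeeping here, because the equivalence hypothesis is strong enough to make the defining formulas coincide pointwise; the smoothness hypothesis only serves to guarantee that these pointwise-equal cocycles actually live in the continuous cochain complex and hence are allowed representatives of classes in $\op{\mathbf{HP}}^{0}(\cA)$ rather than merely in $\op{HP}^{0}(\cA)$. If one prefers, one can combine this with Proposition~\ref{prop:CCC-index-formula2}(1), which asserts that $\bCh(D_{i})_{\sigma_{i}}$ maps to $\Ch(D_{i})_{\sigma_{i}}$ under the natural map $\op{\mathbf{HP}}^{0}(\cA)\rightarrow \op{HP}^{0}(\cA)$; but since that map need not be injective, this detour cannot by itself replace the direct argument above. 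The care needed is therefore only to note that the continuity of $\overline{\tau}_{2q}^{D_{i},\sigma_{i}}$ is automatic from smoothness, so no additional equivalence-of-topologies discussion is required.
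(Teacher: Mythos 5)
Your proposal is correct and follows essentially the same route as the paper: the paper likewise deduces the result immediately from the first parts of Proposition~\ref{prop:equiv.TST.Same.CC}, since the cocycles $\overline{\tau}_{2q}^{D_{i},\sigma_{i}}$ agree on the nose and, by smoothness (Lemma~\ref{lem:tauD.indep.q}), are continuous representatives of $\bCh(D_{i})_{\sigma_{i}}$. Your remark that the comparison map $\op{\mathbf{HP}}^{0}(\cA)\rightarrow \op{HP}^{0}(\cA)$ cannot substitute for the direct argument is a sensible precaution, consistent with the paper's reasoning.
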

\begin{proof}
    This is an immediate consequence of the first two parts of Proposition~\ref{prop:equiv.TST.Same.CC}, since they imply that 
    $\bCh(D_{1})_{\sigma_{1}}$ and $\bCh(D_{2})_{\sigma_{2}}$ are represented by the same cocycles. 
\end{proof}

\begin{proposition}
\label{prop:conf.Dirac.same.Ch.LCA}
    Assume that $(\cA,\cH,D)_{\sigma}$ is $p$-summable for some $p\geq 1$. Then, for any positive element $k\in \cA$, 
    we have
    \begin{equation*}
        \bCh(kDk)_{\hat{\sigma}}= \bCh(D)_{\sigma}\in \op{\mathbf{HP}}^{0}(\cA).
    \end{equation*}    
\end{proposition}
\begin{proof}
 We shall continue using the notation of the proof of Proposition~\ref{prop:conf.Dirac.same.Ch}. This proof shows that, for any $q\geq 
 \frac{1}{2}(p+1)$, the cocycles $\overline{\tau}^{D,\sigma}$ and $\overline{\tau}^{D_{k},\hat\sigma}$ define the same 
 class in $H_{\lambda}^{2q}(\cA)$ by establishing that they both are cohomologous to the cocycle 
 $\overline{\tau}^{\tilde{D}_{1},\sigma}$. In order to prove the result we only need to show that 
 $\overline{\tau}^{D,\sigma}$ and $\overline{\tau}^{D_{k},\hat\sigma}$ both are cohomologous to  
 $\overline{\tau}^{\tilde{D}_{1},\sigma}$  in $\mathbf{H}_{\lambda}^{2q}(\cA)$.  
 
 The equality between the classes of $\overline{\tau}^{D,\sigma}$ and  
 $\overline{\tau}^{\tilde{D}_{1},{\sigma}}$ in $H_{\lambda}^{2q}(\cA)$ is a consequence of the homotopy invariance of the Connes-Chern character and the 
 fact that the operators $\tilde{D}$ and $\tilde{D}_{1}$ can be connected by a operator homotopy of the 
 form~(\ref{eq:homotopy.D.D_1}). Using Lemma~\ref{lem:path.saDirac.cyclic.cocycle} we then see that the cocycles ${\tau}^{\tilde{D},\tilde{\sigma}}$ and  
 ${\tau}^{\tilde{D}_{1}, {\sigma}}$ are cohomologous in $\mathbf{H}_{\lambda}^{2q}(\tilde{\cA})$. Therefore, their 
 restrictions to $\cA$, i.e., the cocycles $\overline{\tau}^{D,\sigma}$ and  
 $\overline{\tau}^{\tilde{D}_{1},\sigma}$, define the same class in $\mathbf{H}_{\lambda}^{2q}(\cA)$. 
 
 In addition, Eq.~(\ref{eq:tau_2q.D_k.D.not.invertible}) shows that the cocycle $\overline{\tau}^{D_{k},\hat\sigma}$ is obtained from 
 $\overline{\tau}^{\tilde{D}_{1},{\sigma}}$ by composing with the inner automorphism defined by $k$. 
 The proof of the invariance of cyclic cohomology by the action of inner automorphisms in~\cite{Lo:CH} holds 
 \emph{verbatim} for the cyclic cohomology of continuous cochains. It then follows that the cocycles
 $\overline{\tau}^{D_{k},\hat\sigma}$ and 
 $\overline{\tau}^{\tilde{D}_{1},{\sigma}}$ define the same class in $\mathbf{H}_{\lambda}^{2q}(\cA)$, and so
 $\overline{\tau}^{D,\sigma}$ and $\overline{\tau}^{D_{k},\hat\sigma}$ are cohomologous in 
 $\mathbf{H}_{\lambda}^{2q}(\cA)$.  The proof is complete. 
\end{proof}

\section{The Conformal Connes-Chern Character}\label{sec:Conformal-CC-character}
In this section, after recalling the construction of the conformal Dirac spectral triple of~\cite{CM:TGNTQF} associated with any given conformal structure, we show that its 
Connes-Chern character (defined in continuous cyclic cohomology) is  actually a conformal invariant.

\subsection{The conformal Dirac spectral triple}
\label{sec:ConformalDiracST}
Throughout this section and the rest of the paper we let $M$ be a compact (closed) spin oriented manifold of even dimension $n$. We also let $\sC$ be a 
conformal structure on $M$, i.e., a conformal class of Riemannian metrics on $M$. We then denote by $G$ the identity component of the group of (smooth) 
orientation-preserving diffeomorphisms of $M$ preserving the  conformal structure $\sC$ and the spin structure of $M$. 
Let $g$ be a representative metric in the conformal class $\sC$, and consider the associated Dirac operator 
$\sD_{g}:C^{\infty}(M,\sS)\rightarrow C^{\infty}(M,\sS)$ on the sections of the spinor bundle
$\sS=\sS^{+}\oplus \sS^{-}$.  In addition, we denote by $L^{2}_{g}(M,\sS)$ the corresponding Hilbert space of $L^{2}$-spinors. 

In the setup of noncommutative geometry, the role of the quotient space $M/G$ is played by the (discrete) crossed-product algebra 
$C^{\infty}(M)\rtimes G$. The underlying vector space of $C^{\infty}(M)\rtimes G$ is $\C G \otimes C^{\infty}(M)$, where 
$\C G$ is the group algebra of $G$ over $\C$. Given $\phi \in G$ and $f \in C^{\infty}(M)$, it is convenient to denote the 
tensor products $1\otimes \phi$  and $f\otimes 1$ by $u_{\phi}$ and $f$, respectively. Using this notation, any element 
of $C^{\infty}(M)\rtimes G$ has a unique representation as a finite sum $\sum f_{\phi}u_{\phi}$, $f_{\phi}\in 
C^{\infty}(M)$, so that we have the direct-sum decomposition,
\begin{equation}
C^{\infty}(M)\rtimes G =\bigoplus_{\phi \in G}C^{\infty}(M)u_{\phi}. 
\label{eq:ConformalST.direct-sum-decomposition-cAG}
\end{equation}

The action of $G$ on $M$ gives to an action on $C^{\infty}(M)$ given by
\begin{equation}
    \phi \cdot f=f\circ \phi^{-1}, \qquad \phi\in G, \ f\in C^{\infty}(M).
    \label{eq:ConformalCCC.action-on-functions}
\end{equation}
The product and involution of $C^{\infty}(M)\rtimes G$ are given by
\begin{gather}
    (f^{0}u_{\phi_{0}})(f^{1}u_{\phi_{1}})=f^{0}(\phi_{0}\cdot f^{1})u_{\phi_{0}\phi_{1}}, \qquad \phi_{j}\in G,\ 
    f^{j}\in C^{\infty}(M), 
    \label{eq:ConformalST.product-cAG}\\
    \left( fu_{\phi}\right)^{*}=u_{\phi}^{-1}\overline{f}=(\phi^{-1} \cdot \overline{f})u_{\phi^{-1}}, \qquad \phi\in 
    G,\ f\in C^{\infty}(M).
       \label{eq:ConformalST.involution-cAG}
\end{gather}
In particular, we have the relations,
\begin{equation*}
    u_{\phi}f=(\phi \cdot f)u_{\phi} \qquad \text{and} \qquad u_{\phi}^{*}=u_{\phi}^{-1}=u_{\phi^{-1}}. 
\end{equation*}

Let $\phi\in G$. As $\phi$ is a diffeomorphism preserving the conformal class $\sC$, there is a unique function 
$k_{\phi}\in C^{\infty}(M)$, $k_{\phi}>0$, such that 
\begin{equation}
    \phi_{*}g=k_{\phi}^{2}g.
    \label{eq:TwistedST.conformal-factor}
\end{equation}Moreover, $\phi$ uniquely lifts to a unitary vector bundle isomorphism $\phi^{\sS}:\sS \rightarrow 
\phi_{*}\sS$, i.e., a unitary section of $\Hom(\sS, \phi_{*}\sS)$ (see~\cite{BG:SODVM}). 
We then let $V_{\phi}:L^{2}_{g}(M,\sS)\rightarrow L^{2}_{g}(M,\sS)$ be the bounded operator given by
\begin{equation}
    V_{\phi}u(x) = \phi^{\sS}\left( u\circ \phi^{-1}(x)\right) \qquad \forall u \in L^{2}_{g}(M,\sS)\ \forall x \in M.
    \label{eq:TwistedST.Vphi}
\end{equation}
The map $\phi \rightarrow V_{\phi}$ is a representation of $G$ in $L^{2}_{g}(M,\sS)$, but this is not a unitary 
representation. In order to get a unitary representation we need to take into account the Jacobian 
$|\phi'(x)|=k_{\phi}(x)^{n}$ of 
$\phi \in G$. This is achieved by using the unitary operator $U_{\phi}:L^{2}_{g}(M,\sS)\rightarrow 
L^{2}_{g}(M,\sS)$ given by 
\begin{equation}
    U_{\phi}=k_{\phi}^{\frac{n}{2}}V_{\phi}, \qquad \phi \in G.
    \label{eq:TwistedST.Uphi}
\end{equation}Then $\phi \rightarrow U_{\phi}$ is a unitary representation of $G$ in $L^{2}_{g}(M,\sS)$. Combining this 
with the action of $C^{\infty}(M)$ by multiplication operators provides with a $*$-representation of the 
crossed-product algebra  $C^{\infty}(M)\rtimes G$. In addition, we let $\sigma_{g}$ be the automorphism of $C^{\infty}(M)\rtimes G$ given by
\begin{equation}
    \sigma_{g}(fu_{\phi}):=k_{\phi}fu_{\phi} \qquad \forall f \in C^{\infty}(M) \ \forall \phi \in G.
    \label{eq:TwistedST.automorphism-conformal-DiracST}
\end{equation}

\begin{proposition}[\cite{CM:TGNTQF}]\label{prop:TwistedST.automorphism-conformal-DiracST} 
    The triple $( C^{\infty}(M)\rtimes G, L^{2}_{g}(M,\sS), \,\sD_{g})_{\sigma_{g}}$ is a twisted 
    spectral triple.
\end{proposition}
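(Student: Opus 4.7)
The plan is to verify the four axioms of Definition~\ref{TwistedSpectralTriple} in order, treating the standard analytic facts about the Dirac operator as black boxes and concentrating the real work on the boundedness of twisted commutators with the unitaries $u_{\phi}$. First, the Hilbert space $L^{2}_{g}(M,\sS)=L^{2}_{g}(M,\sS^{+})\oplus L^{2}_{g}(M,\sS^{-})$ is naturally $\Z_{2}$-graded, and since $\sD_{g}$ exchanges $\sS^{+}$ and $\sS^{-}$, is essentially selfadjoint on $C^{\infty}(M,\sS)$, and has compact resolvent (classical elliptic theory on a closed manifold), condition (4) is immediate up to the boundedness of twisted commutators.

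Next I would check that the formula $fu_{\phi}\mapsto fU_{\phi}$ gives a well-defined $*$-representation of $C^{\infty}(M)\rtimes G$ by even bounded operators. Evenness follows because multiplication by $f\in C^{\infty}(M)$ preserves the grading, and $U_{\phi}$ preserves it since the spin lift $\phi^{\sS}$ of the orientation-preserving $\phi$ maps $\sS^{\pm}$ to $\sS^{\pm}$. Compatibility with products and involutions reduces to the covariance relation $U_{\phi}fU_{\phi}^{-1}=f\circ\phi^{-1}$, which is~(\ref{eq:group.function.properties}) transported to $L^{2}_{g}(M,\sS)$, together with the unitarity of $U_{\phi}$ (which is the reason for the factor $k_{\phi}^{n/2}$ in~(\ref{eq:TwistedST.Uphi})). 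The map $\sigma_{g}$ is an automorphism precisely because of the cocycle identity $k_{\phi_{1}\phi_{2}}=k_{\phi_{1}}\cdot(k_{\phi_{2}}\circ\phi_{1}^{-1})$, which one reads off from $(\phi_{1}\phi_{2})_{*}g=\phi_{1\,*}(k_{\phi_{2}}^{2}g)$. Specializing $\phi_{2}=\phi_{1}^{-1}$ gives $k_{\phi}\circ\phi=k_{\phi^{-1}}^{-1}$, and from this one checks directly that $\sigma_{g}(fu_{\phi})^{*}=\sigma_{g}^{-1}((fu_{\phi})^{*})$.

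The only substantive step, and the main obstacle, is verifying that $[\sD_{g},a]_{\sigma_{g}}$ is bounded for every $a\in C^{\infty}(M)\rtimes G$. Using the twisted Leibniz rule $[D,ab]_{\sigma}=[D,a]_{\sigma}b+\sigma(a)[D,b]_{\sigma}$ and the fact that $\sigma_{g}$ sends $C^{\infty}(M)\rtimes G$ into bounded operators on $L^{2}_{g}(M,\sS)$, it suffices to check boundedness on the two types of generators. For $a=f\in C^{\infty}(M)$ the automorphism $\sigma_{g}$ acts trivially (since $k_{e}=1$), so $[\sD_{g},f]_{\sigma_{g}}=[\sD_{g},f]=c(df)$, the Clifford action of the 1-form $df$, which is visibly bounded. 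For $a=u_{\phi}$ one computes
\begin{equation*}
[\sD_{g},u_{\phi}]_{\sigma_{g}}=\sD_{g}U_{\phi}-k_{\phi}U_{\phi}\sD_{g}=\bigl(\sD_{g}-k_{\phi}U_{\phi}\sD_{g}U_{\phi}^{-1}\bigr)U_{\phi}.
\end{equation*}
The conformal covariance of the Dirac operator, together with the unitary identification $U_{\phi}$ between $L^{2}_{g}$ and $L^{2}_{\phi^{*}g}$ with $\phi^{*}g=(k_{\phi}\circ\phi)^{-2}g=k_{\phi^{-1}}^{2}g$, yields an identity of the form $U_{\phi}\sD_{g}U_{\phi}^{-1}=k_{\phi}^{-1}\sD_{g}+R_{\phi}$, where $R_{\phi}$ is a zeroth-order operator built from $dk_{\phi}$ and Clifford multiplication. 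Plugging this in, the top-order parts cancel by design of the twist $\sigma_{g}$, and $[\sD_{g},u_{\phi}]_{\sigma_{g}}=-k_{\phi}R_{\phi}U_{\phi}$ is bounded.

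The remaining point to be careful about is the precise form of the conformal transformation law for $\sD_{g}$ under $\phi$; this is the conceptual heart of the construction and is where the factor $k_{\phi}$ in the definition~(\ref{eq:TwistedST.automorphism-conformal-DiracST}) of $\sigma_{g}$ becomes forced. Once the identity $U_{\phi}\sD_{g}U_{\phi}^{-1}=k_{\phi}^{-1}\sD_{g}+(\text{bounded})$ is established, everything fits together, and the four axioms are verified; the details on the Dirac side are a standard computation using Bourguignon--Gauduchon~\cite{BG:SODVM}.
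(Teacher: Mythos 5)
Your proof is correct and follows essentially the same route as the paper's (sketched in the remark right after the proposition): both rest on the conformal covariance of the Dirac operator combined with the spin-equivariance of $U_{\phi}$, so that the twist by $k_{\phi}$ cancels the leading term of $\sD_{g}U_{\phi}-k_{\phi}U_{\phi}\sD_{g}$. The only difference is cosmetic: the paper records the exact identity $U_{\phi}\sD_{g}U_{\phi}^{*}=k_{\phi}^{-\frac{1}{2}}\sD_{g}k_{\phi}^{-\frac{1}{2}}$, giving the closed formula $[\sD_{g},U_{\phi}]_{\sigma_{g}}=[\sD_{g},k_{\phi}^{\frac{1}{2}}]k_{\phi}^{-\frac{1}{2}}U_{\phi}$, whereas you keep the bounded zeroth-order correction $R_{\phi}$ implicit, which suffices for boundedness.
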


\begin{remark}
    The bulk of the proof is to show the boundedness of the twisted commutators $[\sD_{g},U_{\phi}]_{\sigma_{g}}$, 
    $\phi \in G$. We remark that 
    \begin{equation*}
        U_{\phi}\sD_{g}U_{\phi}^{*}=k_{\phi}^{\frac{n}{2}}(V_{\phi}\sD_{g}V_{\phi}^{-1})k_{\phi}^{-\frac{n}{2}}=  
       k_{\phi}^{\frac{n}{2}} 
       \sD_{\phi_{*}g}k_{\phi}^{-\frac{n}{2}}=k_{\phi}^{\frac{n}{2}}\sD_{k_{\phi}^{2}g}k_{\phi}^{-\frac{n}{2}}. 
    \end{equation*}Combining this with the conformal invariance of the Dirac operator (see, e.g., \cite{Hi:RITCM}) we obtain
    \begin{equation*}
        U_{\phi}\sD_{g}U_{\phi}^{*}=k_{\phi}^{\frac{n}{2}}\left(  k_{\phi}^{-\left(\frac{n+1}{2}\right)}\sD_{g}k_{\phi}^{\frac{n-1}{2}} \right)k_{\phi}^{-\frac{n}{2}}= 
                k_{\phi}^{-\frac{1}{2}}\sD_{g}k_{\phi}^{-\frac{1}{2}}.
    \end{equation*}
    Using this we see that the twisted commutator $[\sD_{g},U_{\phi}]_{\sigma_{g}}=\sD_{g}U_{\phi}-k_{\phi}U_{\phi}\sD_{g} $ is equal to
    \begin{equation*}
       \left( 
        \sD_{g}k_{\phi}^{\frac{1}{2}}-k_{\phi}(U_{\phi}\sD_{g}U_{\phi}^{*})k_{\phi}^{\frac{1}{2}}\right)k_{\phi}^{-\frac{1}{2}}U_{\phi} 
        = \left( 
        \sD_{g}k_{\phi}^{\frac{1}{2}}-k_{\phi}^{\frac{1}{2}}\sD_{g}\right)k_{\phi}^{-\frac{1}{2}}U_{\phi} = 
        [\sD_{g},k_{\phi}^{\frac{1}{2}}]k_{\phi}^{-\frac{1}{2}}U_{\phi}.
            \end{equation*}This shows that $[\sD_{g},U_{\phi}]_{\sigma_{g}}$ is bounded. 
\end{remark}

\begin{remark}
We shall refer to $( C^{\infty}(M)\rtimes G, L^{2}_{g}(M,\sS), \,\sD_{g})_{\sigma_{g}}$ as the 
\emph{conformal Dirac spectral triple} associated with the representative metric $g$. 
\end{remark}

\begin{remark}
    The automorphism $\sigma_{g}$ is ribbon in the sense mentioned in Remark~\ref{rmk:Twisted-ST.ribbon-condition}. This is seen by using 
the automorphism $\tau_{g}$ of $C^{\infty}(M)\rtimes G$ defined by 
\begin{equation*}
 \tau_{g}(fu_{\phi})=   \sqrt{k_{\phi}}fu_{\phi} \qquad \forall f \in C^{\infty}(M) \ \forall \phi \in G,
\end{equation*}where $k_{\phi}$ is the conformal factor in the sense~(\ref{eq:TwistedST.conformal-factor}). 
\end{remark}

The crossed-product $C^{\infty}(M)\rtimes G$ is endowed with a locally convex space topology as follows. The direct-sum 
decomposition~(\ref{eq:ConformalST.direct-sum-decomposition-cAG}) means that, as a vector space, $C^{\infty}(M)\rtimes G$ is the direct union of the subspaces, 
\begin{equation*}
    C^{\infty}(M)\rtimes F=\bigoplus_{\phi \in F}C^{\infty}(M)u_{\phi}, \qquad \text{$F\subset G$ finite}. 
\end{equation*}
Given a finite subset $F\subset G$, we have a canonical vector space identification $C^{\infty}(M)\rtimes F\simeq 
C^{\infty}(M)^{F}$. Note that $C^{\infty}(M)^{F}$ is a Fr\'echet space with respect to the product topology. Pulling 
back this topology to $C^{\infty}(M)\rtimes F$ we obtain a canonical Fr\'echet space topology on this space. We then 
endow  $C^{\infty}(M)\rtimes G$ with the coarsest locally convex topology that makes continuous the inclusions of the 
subspaces $C^{\infty}(M)\rtimes F$ into $C^{\infty}(M)\rtimes G$ as $F$ ranges over finite subsets of $G$. Thus a basis 
of neighborhoods of the origin in $C^{\infty}(M)\rtimes G$ consists of convex balanced subsets $\cU$ such that, for all 
every finite subset $F\subset G$, the intersection $\cU\cap (C^{\infty}(M)\rtimes F)$ is a neighborhood of $0$ in 
$C^{\infty}(M)\rtimes F$. In particular, given a topological vector space $X$, a linear map $T:C^{\infty}(M)\rtimes 
G\rightarrow X$ is continuous if and only if, for all $\phi\in G$, the linear map $f\rightarrow T(fu_{\phi})$ is 
continuous from $C^{\infty}(M)$ to $X$. 

We also observe that, given $\phi_{0}$ and $\phi_{1}$ in $G$, the multiplication~(\ref{eq:ConformalST.product-cAG}) induces a jointly continuous 
bilinear map $ \left( C^{\infty}(M)u_{\phi_{0}}\right)\times \left(C^{\infty}(M)u_{\phi_{1}}\right) 
    \rightarrow C^{\infty}(M)u_{\phi}$. 
In addition, given any $\phi\in G$, the involution~(\ref{eq:ConformalST.involution-cAG}) induces a continuous anti-linear map $ C^{\infty}(M)u_{\phi}\longrightarrow 
    C^{\infty}(M)u_{\phi^{-1}}$. It then follows that the multiplication of the crossed-product algebra 
    $C^{\infty}(M)\rtimes G$ is a jointly continuous bilinear and its involution is continuous. Therefore, we see that 
    $C^{\infty}(M)\rtimes G$ is a locally convex $*$-algebra with respect the topology described above. 

  \begin{lemma}
     For any metric $g\in \sC$, the twisted spectral triple $( C^{\infty}(M)\rtimes G, 
 L^{2}_{g}(M,\sS),\sD_{g})_{\sigma_{g}}$ is smooth in the sense of Definition~\ref{def:TST.smooth}. 
 \end{lemma}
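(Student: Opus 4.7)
The plan is to verify each of the three conditions in Definition~\ref{def:TST.smooth} by reducing the question to continuity on the ``slices'' $C^{\infty}(M)u_{\phi}$, $\phi\in G$, via Lemma~\ref{lem:ConformalTST.continuity-crossed-product}. With that reduction in hand, each of the three conditions is a straightforward consequence of standard continuity properties of multiplication and differentiation on $C^{\infty}(M)$ equipped with its usual Fr\'echet topology.

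First I would check condition (3), which is the easiest. The formula~(\ref{eq:TwistedST.automorphism-conformal-DiracST}) shows that, on the slice $C^{\infty}(M)u_{\phi}$, the automorphism $\sigma_{g}$ is the map $f\mapsto k_{\phi}f$, followed by the injection $C^{\infty}(M)\hookrightarrow C^{\infty}(M)\rtimes G$, $h\mapsto hu_{\phi}$. Both are continuous since pointwise multiplication by the smooth positive function $k_{\phi}$ is continuous on $C^{\infty}(M)$, and the injection is continuous by definition of the inductive limit topology. The same argument applied to $\sigma_{g}^{-1}(fu_{\phi})=k_{\phi}^{-1}fu_{\phi}$ shows $\sigma_{g}^{-1}$ is continuous, hence $\sigma_{g}$ is a homeomorphism. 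Condition (1) is equally simple: on $C^{\infty}(M)u_{\phi}$ the representation is $fu_{\phi}\mapsto M_{f}U_{\phi}$, where $M_{f}$ is multiplication by $f$; since $U_{\phi}$ is unitary we have $\|M_{f}U_{\phi}\|_{\cL(L^{2}_{g}(M,\sS))}\leq \|f\|_{\infty}$, and the sup-norm is a continuous seminorm on $C^{\infty}(M)$.

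The main content lies in condition (2), the continuity of the twisted commutator map. Using the Leibniz-type identity
\begin{equation*}
  [\sD_{g},fU_{\phi}]_{\sigma_{g}}=[\sD_{g},f]U_{\phi}+f\,[\sD_{g},U_{\phi}]_{\sigma_{g}},
\end{equation*}
together with the computation recalled in the remark following Proposition~\ref{prop:TwistedST.automorphism-conformal-DiracST}, namely
\begin{equation*}
  [\sD_{g},U_{\phi}]_{\sigma_{g}}=[\sD_{g},k_{\phi}^{1/2}]\,k_{\phi}^{-1/2}U_{\phi},
\end{equation*}
the map $f\mapsto [\sD_{g},fU_{\phi}]_{\sigma_{g}}$ becomes the sum of two terms in $f$. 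The first term, $f\mapsto [\sD_{g},f]U_{\phi}$, is continuous from $C^{\infty}(M)$ to $\cL(L^{2}_{g}(M,\sS))$ because $[\sD_{g},f]$ is the bounded operator of Clifford multiplication by $df$, with operator norm controlled by $\|df\|_{\infty}$, a continuous seminorm on $C^{\infty}(M)$. The second term, $f\mapsto f\cdot\bigl([\sD_{g},k_{\phi}^{1/2}]k_{\phi}^{-1/2}U_{\phi}\bigr)$, is continuous because it is multiplication of a fixed bounded operator by $M_{f}$, which as in (1) is bounded by $\|f\|_{\infty}$.

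The only mildly subtle point, and the one I would expect to have to state carefully, is the continuity of $f\mapsto [\sD_{g},f]$ as a map into $\cL(L^{2}_{g}(M,\sS))$; but this is the standard fact that the principal symbol of a commutator with a first-order operator is Clifford (or ordinary) multiplication by the differential of $f$, and the resulting operator norm is bounded by a $C^{1}$-seminorm. Putting these three verifications together, Lemma~\ref{lem:ConformalTST.continuity-crossed-product} yields the continuity of each of the three structural maps on the full algebra $C^{\infty}(M)\rtimes G$, establishing that the conformal Dirac spectral triple is smooth.
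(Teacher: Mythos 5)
Your proof is correct and follows essentially the same route as the paper: reduce to the slices $C^{\infty}(M)u_{\phi}$ via Lemma~\ref{lem:ConformalTST.continuity-crossed-product}, observe that $\sigma_{g}^{\pm 1}$ act on a slice by multiplication by $k_{\phi}^{\pm 1}$, and use the Leibniz decomposition $[\sD_{g},fU_{\phi}]_{\sigma_{g}}=[\sD_{g},f]U_{\phi}+f[\sD_{g},U_{\phi}]_{\sigma_{g}}$ with $[\sD_{g},f]=ic(df)$ to control condition (2). You spell out the seminorm estimates a bit more explicitly than the paper does, but the structure of the argument is identical.
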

 \begin{proof}
   Let $\phi\in G$. The map $f\rightarrow fU_{\phi}$ is continuous from $C^{\infty}(M)$ to $\cL\left( 
   L^{2}_{g}(M,\sS)\right)$. As $\sigma_{g}(fu_{\phi})=k_{\phi}fu_{\phi}$ and 
   $\sigma_{g}^{-1}(fu_{\phi})=k_{\phi}^{-1}fu_{\phi}$, the maps $\phi \rightarrow \sigma_{g}^{\pm 1}(fu_{\phi})$ are 
   continuous from $C^{\infty}(M)$ to $C^{\infty}(M)\rtimes G$. In addition, note that, for any $f\in C^{\infty}(M)$, 
   \begin{equation*}
       [\sD_{g},fU_{\phi}]_{\sigma_{g}}=[\sD_{g},f]U_{\phi}+f[\sD_{g},U_{\phi}]_{\sigma_{g}}=ic(df)U_{\phi}+f[\sD_{g},U_{\phi}]_{\sigma_{g}}, 
   \end{equation*}where $c(df)$ is the Clifford representation of the differential $df$. Thus, the map 
   $f\rightarrow [\sD_{g},fU_{\phi}]_{\sigma_{g}}$ is continuous from $C^{\infty}(M)$ to $\cL\left( 
   L^{2}_{g}(M,\sS)\right)$. It follows from all this that the conditions (1)--(3) of 
   Definition~\ref{def:TST.smooth} are satisfied, and so 
   $( C^{\infty}(M)\rtimes G,  L^{2}_{g}(M,\sS),\sD_{g})_{\sigma_{g}}$ is a smooth twisted spectral triple . The lemma is thus proved. 
 \end{proof}

\subsection{Conformal invariance of the Connes-Chern character}
The construction of the conformal Dirac spectral triple $( C^{\infty}(M)\rtimes G, L^{2}_{g}(M,\sS),\sD_{g})_{\sigma_{g}}$, 
depends on the choice of a representative metric $g$ in the conformal class $\sC$. 
The following proposition describes the dependence on this choice.

\begin{proposition}\label{prop:Auto.Conf.Change}
     Let $\hat{g}$ be another metric in the conformal class $\sC$, i.e., $\hat{g}=k^{2}g$ for some function $k\in C^{\infty}(M)$, 
     $k>0$. Let $\hat{\sigma}$ be the automorphism of $C^{\infty}(M)\rtimes G$ defined by
     \begin{equation}
     \label{eq:sigma.hat.conf}
         \hat{\sigma}(a)=k^{-\frac{1}{2}}\sigma_{g}(k^{-\frac{1}{2}}ak^{\frac{1}{2}})k^{\frac{1}{2}}= 
         k^{-1}\sigma_{g}(a)k, \qquad 
         a\in C^{\infty}(M)\rtimes G. 
     \end{equation}
 Then the conformal Dirac spectral $( C^{\infty}(M)\rtimes G, 
 L^{2}_{\hat{g}}(M,\sS),\sD_{\hat{g}})_{\sigma_{\hat{g}}}$ associated with $\hat{g}$ is equivalent to the conformal deformation 
 $(C^{\infty}(M)\rtimes G, L^{2}_{g}(M,\sS),k^{-\frac{1}{2}}\sD_{g}k^{-\frac{1}{2}})_{\hat{\sigma}}$. 
  \end{proposition}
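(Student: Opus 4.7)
The strategy is to construct an explicit unitary $U:L^2_g(M,\sS)\to L^2_{\hat g}(M,\sS)$ implementing the equivalence. Since $d\textup{vol}_{\hat g}=k^n d\textup{vol}_g$, the natural candidate---once we identify the spinor bundles $\sS_g$ and $\sS_{\hat g}$ via the canonical fibrewise unitary isomorphism attached to a conformal change (see~\cite{Hi:RITCM, CM:TGNTQF})---is multiplication by $k^{-n/2}$, whose adjoint $U^*:L^2_{\hat g}(M,\sS)\to L^2_g(M,\sS)$ is multiplication by $k^{n/2}$. With this choice of $U$, verifying the equivalence reduces to checking the three conditions~(\ref{eq:UniEquivOp})--(\ref{eq:UniEquiv}).

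For the Dirac operator condition, the classical conformal covariance formula $\sD_{\hat g}=k^{-(n+1)/2}\sD_g k^{(n-1)/2}$ (see~\cite{Hi:RITCM}) yields
\begin{equation*}
U^*\sD_{\hat g}U=k^{n/2}\cdot k^{-(n+1)/2}\sD_g k^{(n-1)/2}\cdot k^{-n/2}=k^{-1/2}\sD_g k^{-1/2},
\end{equation*}
as desired. For the representation of the crossed product, multiplication operators $f\in C^\infty(M)$ trivially commute with $U$. For the group unitaries $U_\phi^g=k_{\phi,g}^{n/2}V_\phi$ and $U_\phi^{\hat g}=k_{\phi,\hat g}^{n/2}V_\phi$, where $V_\phi$ denotes the geometric lift common to both metrics, I will use the transformation law
\begin{equation*}
k_{\phi,\hat g}=(k\circ\phi^{-1})\,k_{\phi,g}\,k^{-1},
\end{equation*}
which is obtained by applying $\phi_*$ to $\hat g=k^2 g$ and comparing with $\phi_*\hat g=k_{\phi,\hat g}^2\hat g$. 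Combined with the pointwise identity $V_\phi(h\,s)=(h\circ\phi^{-1})V_\phi s$ for scalar $h$, a direct computation yields $U^*U_\phi^{\hat g}U=U_\phi^g$, so $U$ intertwines the two representations of $C^\infty(M)\rtimes G$.

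It remains to establish the algebraic identity $\hat\sigma=\sigma_{\hat g}$ on $C^\infty(M)\rtimes G$, which then immediately gives the third equivalence condition. Using the crossed-product commutation rule $u_\phi k=(k\circ\phi^{-1})u_\phi$ from~(\ref{eq:group.function.properties}), I compute
\begin{equation*}
\hat\sigma(fu_\phi)=k^{-1}\sigma_g(fu_\phi)k=k^{-1}k_{\phi,g}\,f\,(k\circ\phi^{-1})\,u_\phi=k_{\phi,\hat g}\,fu_\phi=\sigma_{\hat g}(fu_\phi),
\end{equation*}
the third equality invoking the transformation law for the conformal factor.

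I do not anticipate a genuine analytic obstacle: the argument is essentially a bookkeeping exercise balancing three independent half-integer powers of $k$---those coming from the conformal deformation of $\sD_g$, the $n/2$-power from the unitary implementation of $G$, and the full $n$-power from the change of volume form. The main conceptual content is the reconciliation of the abstract conformal-deformation automorphism $\hat\sigma$ from Section~\ref{sec:Invariance-CC-character} with the geometrically defined automorphism $\sigma_{\hat g}$ attached to the new representative metric; the two are forced to agree precisely because both encode the conformal factor $k_{\phi,\hat g}$ in the same manner via the transformation law above.
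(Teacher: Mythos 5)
Your proposal is correct and follows essentially the same route as the paper: conjugation by the unitary given by multiplication by $k^{\pm\frac{n}{2}}$ (accounting for the change of volume form), the conformal covariance formula $\sD_{\hat g}=k^{-\frac{1}{2}(n+1)}\sD_g k^{\frac{1}{2}(n-1)}$, and the transformation law $k_{\phi,\hat g}=(k\circ\phi^{-1})\,k_{\phi,g}\,k^{-1}$ to intertwine the crossed-product representations and their twists. Your explicit isolation of the identity $\hat\sigma=\sigma_{\hat g}$ is just a clean repackaging of the paper's verification that $U$ intertwines $\sigma_{\hat g}(f\widehat V_\phi)$ with $\hat\sigma(fV_\phi)$, so there is nothing substantive to add.
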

 \begin{proof}
     Let $U:L^{2}_{\hat{g}}(M,\sS)\rightarrow L^{2}_{g}(M,\sS)$ be the operator given by the multiplication by 
     $k^{\frac{n}{2}}$. Let $u\in L^{2}_{\hat{g}}(M,\sS)$. As $|dx|_{\hat{g}}=k(x)^{n}|dx|_{g}$, we have
     \begin{equation*}
         \|Uu\|_{L^{2}_{g}(M,\sS)}^{2}=\int_{M} \acoup{k(x)^{\frac{n}{2}}u(x)}{k(x)^{\frac{n}{2}}u(x)} |dx|_{g} 
         =\int_{M}\acoup{u(x)}{u(x)}|dx|_{\hat{g}}= \|u\|_{L^{2}_{\hat{g}}(M,\sS)}^{2}.
     \end{equation*}This shows that $U$ is a unitary operator. Moreover, using the conformal invariance of the Dirac 
     operator~\cite{Hi:HS} we see that
     \begin{equation}
     \label{eq:conf.Inva.Dirac}
         \sD_{\hat{g}}=k^{-\frac{1}{2}(n+1)}\sD_{g}k^{\frac{1}{2}(n-1)}=U^{*}k^{-\frac{1}{2}}\sD_{g}k^{-\frac{1}{2}}U.
     \end{equation}

     Let $\phi\in G$. We have two representations of $\phi$. One is the unitary operator $V_{\phi}$  of 
     $L^{2}_{g}(M,\sS)$ given by~(\ref{eq:TwistedST.Vphi}) using the representative metric $g$. We have another representation of $\phi$ 
     as a unitary operator $\widehat{V}_{\phi}$ of  $L^{2}_{\hat{g}}(M,\sS)$ given by the same formula using the metric $\hat{g}$. That is, 
     \begin{equation*}
         \widehat{V}_{\phi}u=e^{n\hat{h}_{\phi}}\phi_{*}u, \qquad u \in L^{2}_{\hat{g}}(M,\sS),
     \end{equation*}where $\hat{h}_{\phi}\in C^{\infty}(M,\R)$ and satisfies that $\phi_{*}\hat{g}=e^{2h_{\phi}}\hat{g}$. Set $h=\log 
     k$. Then
     \begin{equation*}
         \phi_{*}\hat{g}=\phi_{*}(k^{2}g)=(k\circ \phi^{-1})^{2}\phi_{*}g=e^{2h}e^{2h_{\phi}}g=e^{2(h\circ 
         \phi^{-1}+h_{\phi}-h)}\hat{g},
     \end{equation*}which shows that $\hat{h}_{\phi}=h\circ 
         \phi^{-1}+h_{\phi}-h$. Let $u \in L^{2}_{\hat{g}}(M,\sS)$. Then
     \begin{equation*}
         \widehat{V}_{\phi}u=e^{-nh}e^{nh_{\phi}}e^{-nh\circ 
         \phi}\phi_{*}u=k^{-\frac{n}{2}}e^{nh_{\phi}}\phi_{*}(k^{\frac{n}{2}}u)=U^{*}V_{\phi}Uu.
     \end{equation*}Likewise, 
     \begin{equation*}
         \sigma_{\hat{g}}(\widehat{V}_{\phi})u=e^{-(n+1)\hat{h}_{\phi}}\phi_{*}u=k^{-(n+1)}e^{(n+1)h_{\phi}}\phi_{*}(k^{n+1}u)= 
         Uk^{-1}\sigma_{g}(V_{\phi})kUu=U^{-1}\hat{\sigma}(V_{\phi})Uu. 
     \end{equation*}We then deduce that, for all $f\in C^{\infty}(M)$ and $\phi\in G$,
     \begin{equation*}
         f\widehat{V}_{\phi}=U^{*}(fV_{\phi})U  \qquad \text{and} \qquad 
         \sigma_{\hat{g}}(f\widehat{V}_{\phi})=U^{*}\hat{\sigma}(fV_{\phi})U.
     \end{equation*}
     Combining this with~(\ref{eq:conf.Inva.Dirac}) shows that the twisted spectral triples $( C^{\infty}(M)\rtimes G, 
 L^{2}_{\hat{g}}(M,\sS),\sD_{\hat{g}})_{\sigma_{\hat{g}}}$ and 
 $(C^{\infty}(M)\rtimes G, L^{2}_{g}(M,\sS),k^{-\frac{1}{2}}\sD_{g}k^{-\frac{1}{2}})_{\hat{\sigma}}$ are equivalent. The proof is complete. 
 \end{proof}
 
 Given any metric $g\in \sC$, the $j$-th eingenvalue of $|\sD_{g}|$ grows like 
 $j^{\frac{1}{n}}$ as $j\rightarrow \infty$. Therefore, the associated twisted spectral triple $( C^{\infty}(M)\rtimes G, 
 L^{2}_{g}(M,\sS),\sD_{g})_{\sigma_{g}}$ is $p$-summable for all $p>n$. Combining this with Proposition~\ref{prop:equiv.TST.Same.CC} shows that the Connes-Chern character of $( C^{\infty}(M)\rtimes G, 
 L^{2}_{g}(M,\sS),\sD_{g})_{\sigma_{g}}$ is well defined as a class in $\op{\mathbf{HP}}^{0}( C^{\infty}(M)\rtimes G)$ for every 
 metric $g$ in the conformal class $\sC$.  
 
We are now in a position to state the main result of this section.
 
\begin{theorem}\label{thm:CC.conf.inv}
    The Connes-Chern character $\op{\mathbf{Ch}}(\sD_{g})_{\sigma_{g}}\in \op{\mathbf{HP}}^{0}\left(C^{\infty}(M)\rtimes G\right)$ is 
    independent of the choice of the metric $g\in \sC$, i.e, it is an invariant of the conformal structure $\sC$. 
\end{theorem}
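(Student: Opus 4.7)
The proof essentially amounts to chaining together results already established in the paper. Given two metrics $g,\hat{g}\in \sC$, we may write $\hat{g}=k^{2}g$ with $k\in C^{\infty}(M)$, $k>0$. Viewing $k$ as a positive element of $C^{\infty}(M)\rtimes G$ (via the inclusion $C^{\infty}(M)\hookrightarrow C^{\infty}(M)\rtimes G$, $f\mapsto fu_{\op{id}}$), the element $k^{-1/2}$ also lies in $C^{\infty}(M)\rtimes G$. Thus Proposition~\ref{prop:Auto.Conf.Change} gives an equivalence of smooth twisted spectral triples
\begin{equation*}
   \big(C^{\infty}(M)\rtimes G,\, L^{2}_{\hat{g}}(M,\sS),\,\sD_{\hat{g}}\big)_{\sigma_{\hat{g}}}
   \;\simeq\;
   \big(C^{\infty}(M)\rtimes G,\, L^{2}_{g}(M,\sS),\, k^{-\frac{1}{2}}\sD_{g}k^{-\frac{1}{2}}\big)_{\hat{\sigma}},
\end{equation*}
where $\hat{\sigma}$ is as in~(\ref{eq:sigma.hat.conf}). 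The right-hand triple is nothing but the conformal deformation of $(C^{\infty}(M)\rtimes G,L^{2}_{g}(M,\sS),\sD_{g})_{\sigma_{g}}$ by the positive element $k^{-1/2}$, in the sense of Section~\ref{sec:Invariance-CC-character}.

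My plan is then to combine three ingredients. First, the analytic input: every $(C^{\infty}(M)\rtimes G,L^{2}_{g}(M,\sS),\sD_{g})_{\sigma_{g}}$ is $p$-summable for any $p>n$, and the lemma just before Theorem~\ref{thm:CC.conf.inv} shows it is smooth; hence $\bCh(\sD_{g})_{\sigma_{g}}\in \op{\mathbf{HP}}^{0}(C^{\infty}(M)\rtimes G)$ is defined. Second, Proposition~\ref{prop:conf.Dirac.same.Ch.LCA} applied to the positive element $k^{-1/2}$ yields
\begin{equation*}
    \bCh\big(k^{-\frac{1}{2}}\sD_{g}k^{-\frac{1}{2}}\big)_{\hat{\sigma}}
    \;=\;\bCh(\sD_{g})_{\sigma_{g}}
    \quad\text{in }\op{\mathbf{HP}}^{0}(C^{\infty}(M)\rtimes G).
\end{equation*}
Third, the smooth-triple version of Proposition~\ref{prop:equiv.TST.Same.CC} (stated just before Proposition~\ref{prop:conf.Dirac.same.Ch.LCA}) says equivalent smooth $p$-summable twisted spectral triples have identical continuous Connes-Chern characters, so
\begin{equation*}
    \bCh(\sD_{\hat{g}})_{\sigma_{\hat{g}}}
    \;=\;\bCh\big(k^{-\frac{1}{2}}\sD_{g}k^{-\frac{1}{2}}\big)_{\hat{\sigma}}.
\end{equation*}
Combining these two equalities gives $\bCh(\sD_{\hat{g}})_{\sigma_{\hat{g}}}=\bCh(\sD_{g})_{\sigma_{g}}$, proving the theorem.

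There is really no hard step; the work was done in Section~\ref{sec:Invariance-CC-character} and in Proposition~\ref{prop:Auto.Conf.Change}. The only subtlety to double-check is a compatibility of notation: in Proposition~\ref{prop:Auto.Conf.Change} the deformation parameter that appears is $k^{-1/2}$ rather than the $k$ of the abstract conformal deformation formalism, but since Proposition~\ref{prop:conf.Dirac.same.Ch.LCA} applies to \emph{any} positive element of the algebra, this causes no issue. One should also verify that the automorphism $\hat{\sigma}$ of~(\ref{eq:sigma.hat.conf}) indeed coincides with the automorphism associated to the conformal deformation by $k^{-1/2}$ as defined in Section~\ref{sec:Invariance-CC-character}, i.e.\ that $\hat{\sigma}(a)=k^{-1/2}\sigma_{g}(k^{-1/2}\,a\,k^{1/2})k^{1/2}=k^{-1}\sigma_{g}(a)k$, which is exactly~(\ref{eq:sigma.hat.conf}). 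With this bookkeeping in place, the theorem follows at once.
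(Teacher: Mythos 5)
Your proof is correct and follows exactly the same route as the paper: combine Proposition~\ref{prop:Auto.Conf.Change} (changing the metric within $\sC$ amounts, up to equivalence, to a conformal deformation by $k^{-1/2}$), the smooth-triple version of Proposition~\ref{prop:equiv.TST.Same.CC} (invariance under equivalence), and Proposition~\ref{prop:conf.Dirac.same.Ch.LCA} (invariance under conformal deformation). Your bookkeeping check that $\hat{\sigma}(a)=k^{-1/2}\sigma_{g}(k^{-1/2}ak^{1/2})k^{1/2}=k^{-1}\sigma_{g}(a)k$ matches~(\ref{eq:sigma.hat.conf}) is a worthwhile detail that the paper leaves implicit.
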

\begin{proof}
Let $\hat{g}$ be another metric in the conformal class $\sC$, so that $\hat{g}=k^{2}g$ with $k\in C^{\infty}(M)$, 
     $k>0$. Combining Proposition~\ref{prop:Auto.Conf.Change} with Proposition~\ref{prop:equiv.TST.Same.CC} and 
 Proposition~\ref{prop:conf.Dirac.same.Ch.LCA} we get
 \begin{equation*}
   \bCh(\sD_{\hat{g}})_{\sigma_{\hat{g}}}=\bCh\left(k^{-\frac{1}{2}}\sD_{g}k^{-\frac{1}{2}}\right)_{\hat{\sigma}}=\bCh(\sD_{g})_{\sigma_{g}} \qquad  
   \text{in $\op{\mathbf{HP}}^{0}\left(C^{\infty}(M)\rtimes G\right)$},
 \end{equation*}where $\hat{\sigma}$ is defined as in~(\ref{eq:sigma.hat.conf}). This proves the result.  
\end{proof}

 This leads us to the following definition. 
 
 \begin{definition}
  The Connes-Chern character of the conformal class $\sC$, denoted by   $\bCh(\sC)$, 
  is the common class in $\op{\mathbf{HP}}^{0}\left(C^{\infty}(M)\rtimes G\right)$ of the Connes-Chern characters of the 
  conformal Dirac spectral triples $( C^{\infty}(M)\rtimes G, 
  L^{2}_{g}(M,\sS),\sD_{g})_{\sigma_{g}}$ as the metric $g$ ranges over the conformal class $\sC$. 
 \end{definition}
 
Combining this with Proposition~\ref{prop:CCC-index-formula2} we obtain the following index formula.

\begin{proposition}\label{prop:ConformalCCC.index-formula}
 Let $\cE$ be a finitely generated projective module over $C^{\infty}(M)\rtimes G$. Then, for any metric $g\in \sC$ and any $\sigma_{g}$-connection on $\cE$, it 
 holds that
 \begin{equation*}
     \ind D_{\nabla^{\cE}}=\acou{\bCh(\sC)}{\bCh(\cE)}.
 \end{equation*}
\end{proposition}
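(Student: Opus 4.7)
The plan is to derive the statement as a direct corollary of Proposition~\ref{prop:CCC-index-formula2} combined with the conformal invariance established in Theorem~\ref{thm:CC.conf.inv} and the subsequent definition of $\bCh(\sC)$. The architecture is already entirely in place; the remaining task is simply to check that the hypotheses of Proposition~\ref{prop:CCC-index-formula2} apply to the conformal Dirac spectral triple for any metric $g\in\sC$.

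First, I would fix any representative metric $g\in \sC$ and consider the associated conformal Dirac spectral triple $(C^{\infty}(M)\rtimes G,\,L^{2}_{g}(M,\sS),\,\sD_{g})_{\sigma_{g}}$ provided by Proposition~\ref{prop:TwistedST.automorphism-conformal-DiracST}. As recalled in the paragraph preceding Theorem~\ref{thm:CC.conf.inv}, the Weyl law for $\sD_{g}$ (its $j$-th singular value grows like $j^{1/n}$) ensures that this twisted spectral triple is $p$-summable for every $p>n$, and the lemma just before Theorem~\ref{thm:CC.conf.inv} established that it is smooth in the sense of Definition~\ref{def:TST.smooth}. Hence the hypotheses of Proposition~\ref{prop:CCC-index-formula2} are met.

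Applying part (2) of Proposition~\ref{prop:CCC-index-formula2} to this smooth $p$-summable twisted spectral triple, with the given finitely generated projective module $\cE$ and the chosen $\sigma_{g}$-connection $\nabla^{\cE}$, yields
\begin{equation*}
\ind D_{\nabla^{\cE}}=\acou{\bCh(\sD_{g})_{\sigma_{g}}}{\bCh(\cE)} \quad \text{in } \op{\mathbf{HP}}^{0}(C^{\infty}(M)\rtimes G).
\end{equation*}
By Theorem~\ref{thm:CC.conf.inv} together with the definition of $\bCh(\sC)$ as the common value of the classes $\bCh(\sD_{g})_{\sigma_{g}}$ as $g$ ranges over $\sC$, the right-hand side equals $\acou{\bCh(\sC)}{\bCh(\cE)}$, which is the desired identity.

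No genuine obstacle is anticipated: the proposition is a packaging of previous results. The only point worth emphasising is an interesting consequence, namely that although the operator $D_{\nabla^{\cE}}$ and even the Hilbert space $L^{2}_{g}(M,\sS)$ on which it is built depend on the choice of $g\in \sC$, the right-hand side is manifestly metric-independent, so the Fredholm index $\ind D_{\nabla^{\cE}}$ is itself an invariant of the conformal class $\sC$ and the $K$-theory class of $\cE$.
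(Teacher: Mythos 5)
Your proposal is correct and follows exactly the paper's route: the paper states the proposition as an immediate combination of Proposition~\ref{prop:CCC-index-formula2} (applied to the smooth $p$-summable conformal Dirac spectral triple) with Theorem~\ref{thm:CC.conf.inv} and the definition of $\bCh(\sC)$. Your closing remark that the Fredholm index is consequently an invariant of $\sC$ and the $K$-theory class of $\cE$ is a correct and worthwhile observation, consistent with Proposition~\ref{prop.IndexTwisted-connection}.
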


\section{Local Index Formula in Conformal Geometry}\label{sec:LIF-Conformal-Geometry}
In this section, we shall compute the conformal Connes-Chern character $\bCh(\sC)$ when the conformal structure $\sC$ 
is not flat. Together with 
Proposition~\ref{prop:ConformalCCC.index-formula} this will provide us with the local index formula in 
conformal-diffeomorphism invariant geometry. We recall that the conformal structure $\sC$ is flat when it is equivalent to the conformal 
structure of the round sphere $\mathbb{S}^{n}$. In case $M$ is simply connected and has dimension~$\geq 4$ this is 
equivalent to the vanishing of the Weyl curvature tensor of $M$ (see~\cite{Ku:CFSL}).

As pointed out in Remark~\ref{rem:Moscivici.ansatz}, the ansatz of Moscovici~\cite{Mo:LIFTST} is not known to hold for conformal 
deformations of ordinary spectral triples sastisfying the local index formula in noncommutative geometry 
of~\cite{CM:GAFA95}. As a result, unless the conformal structure $\sC$ is flat and $G$ is a maximal parabolic subgroup of 
$\op{PO}(n+1,1)$,  we cannot claim that for a general metric in $\sC$ the corresponding conformal Dirac spectral triple 
$( C^{\infty}(M)\rtimes G, L^{2}_{g}(M,\sS), \,\sD_{g})_{\sigma_{g}}$ satisfies Moscovici's ansatz. 

Having said this, the conformal invariance of the Connes-Chern character $\op{\mathbf{Ch}}(\sD_{g})_{\sigma_{g}}$ provided by Theorem~\ref{thm:CC.conf.inv} allows us to 
choose \emph{any} metric in 
the conformal class $\sC$ to compute $\bCh(\sC)$. In particular, as we shall see below (and as also observed by Moscovici~\cite{Mo:LIFTST}), 
the computation is greatly simplified by choosing a $G$-invariant metric in the conformal class $\sC$. When the conformal structure $\sC$ is non-flat, the existence 
of such a metric is ensured by the following result.

\begin{proposition}[Ferrand-Obata~\cite{Ba:ECGCI, Fe:ACTRM, Sc:OCCRAG}] If the conformal structure $\sC$ is 
    not flat, then the group of smooth diffeomorphisms of $M$ preserving $\sC$ is a compact Lie group and there 
    is a metric in $\sC$ that is invariant by this group.
\end{proposition}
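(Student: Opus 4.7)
The statement is the classical Ferrand-Obata theorem, and my plan is to reduce it to two well-known ingredients: the Ferrand-Obata dichotomy (an essential conformal action forces conformal equivalence with the round sphere) together with the Myers-Steenrod theorem on isometry groups.

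First I would dispose of the compactness claim conditional on the existence of an invariant metric. Let $H$ denote the full group of smooth diffeomorphisms of $M$ preserving $\sC$. If $g_{0}\in \sC$ is $H$-invariant, then $H$ embeds in the isometry group $\op{Isom}(M,g_{0})$. By the Myers-Steenrod theorem, the isometry group of a compact Riemannian manifold is a compact Lie group, so it suffices to check that $H$ is closed in $\op{Isom}(M,g_{0})$. Closedness follows because a $C^{\infty}$-limit of diffeomorphisms preserving $\sC$ is still conformal: the defining relation $\phi^{*}g_{0}=k_{\phi}^{2}g_{0}$ with $k_{\phi}>0$ passes to smooth limits, and positivity of the conformal factor is preserved in the limit.

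Next, to produce the $H$-invariant metric in $\sC$, I would argue by contradiction following the Ferrand-Obata strategy. Suppose no metric in $\sC$ is $H$-invariant; equivalently, the conformal action of $H$ is \emph{essential}, in the sense that for every $g\in \sC$ there is some $\phi\in H$ with $\phi^{*}g\neq g$. Fix any $g\in\sC$; the key conformal invariant of $(M,\sC)$ is Ferrand's conformal capacity $\mu_{M}(x,y)$, which is preserved by every conformal diffeomorphism. Essentialness produces a sequence $(\phi_{n})\subset H$ whose conformal factors $k_{\phi_{n}}$ degenerate, and a careful analysis of subsequential limits of $\phi_{n}$ together with the invariance of $\mu_{M}$ forces the existence of a conformal diffeomorphism from $(M,\sC)$ onto the standard round sphere $(\stS^{n},\sC_{\op{can}})$. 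This contradicts the non-flatness of $\sC$.

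The decisive and hardest step is the Ferrand-Obata dichotomy itself, which is a deep global result in conformal geometry. Obata's original argument handled the case of a smooth essential conformal Killing vector field through a Lichnerowicz-type overdetermined equation on the conformal factor; Ferrand's far more delicate extension, based on conformal capacity, removes the vector-field hypothesis but requires substantial geometric measure-theoretic analysis of sequences of quasiconformal maps and careful control of the limit points of orbits. Since the theorem is cited rather than reproved in the excerpt, my proof proposal is to invoke it directly together with Myers-Steenrod, exactly as the authors do.
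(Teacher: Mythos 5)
The paper does not prove this proposition at all: it is quoted verbatim from the literature, with the references to Ferrand, Schoen and Banyaga standing in for the proof, so there is no ``paper's argument'' to compare against beyond the citation itself. Your outline is the standard deduction and is sound as a reduction: the deep input is the Lelong-Ferrand/Schoen essentiality dichotomy (an essential conformal group on a compact manifold forces conformal equivalence with the round sphere $\stS^{n}$), and once an invariant metric $g_{0}\in\sC$ exists, $H\subset\op{Isom}(M,g_{0})$ is closed (conformality of the relation $\phi^{*}g_{0}=k_{\phi}^{2}g_{0}$ passes to $C^{\infty}$-limits), hence a compact Lie group by Myers--Steenrod and Cartan's closed-subgroup theorem. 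One remark on ordering: the references you and the authors cite actually establish compactness (properness of the conformal action) first, and the invariant metric is then obtained by averaging any $g\in\sC$ over the Haar measure of the compact group -- this average stays in $\sC$ because each pullback $\phi^{*}g=k_{\phi}^{2}g$ is a pointwise positive multiple of $g$, so the average is $\bigl(\int k_{\phi}^{2}\,d\lambda(\phi)\bigr)g$. Your reversed order (metric first via the essentiality dichotomy, then compactness) is also valid provided you use Lelong-Ferrand's formulation of ``essential'' as ``not conjugate into the isometry group of any metric in the class,'' which is exactly the negation you take; just be aware that if one instead starts from the properness formulation of the dichotomy, the short averaging step is the cleaner way to produce the invariant metric.
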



The relevance of using a $G$-invariant metric $g\in \sC$ stems from the observation that in this case  $\phi_{*}g=g$ for all 
$\phi\in G$, and so, for every $\phi\in G$, the conformal factor $k_{\phi}$ is always the constant function $1$. This 
implies that, for any $\phi\in G$, the unitary operator $U_{\phi}$ agrees with the pushforward by $\phi$. This also 
implies that the automorphism $\sigma_{g}$ given by~(\ref{eq:TwistedST.automorphism-conformal-DiracST}) is trivial, so that the conformal Dirac spectral triple 
$(C^{\infty}(M)\rtimes G, L^{2}_{g}(M,\sS), \,\sD_{g})_{\sigma_{g}}$ is actually an \emph{ordinary} spectral triple. Therefore, we 
arrive at the following statement. 

\begin{proposition}
\label{prop:representative.conformal.Chern.char}
   The conformal Connes-Chern character $\bCh(\sC)$ agrees with the ordinary 
  Connes-Chern character of the equivariant Dirac spectral triple $(C^{\infty}(M)\rtimes G,L^{2}_{g}(M,\sS),\sD_{g})$ 
  associated with any $G$-invariant metric $g$ in the 
  conformal class $\sC$.
\end{proposition}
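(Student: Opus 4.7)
The plan is to deduce this proposition as an essentially formal consequence of Theorem~\ref{thm:CC.conf.inv} combined with the Ferrand--Obata result cited just above. The content to be checked is purely definitional: that when the representative metric $g\in\sC$ is $G$-invariant, the conformal Dirac spectral triple degenerates to an ordinary (untwisted) equivariant Dirac spectral triple. Since Ferrand--Obata guarantees the existence of at least one such $g\in\sC$ (using here the non-flatness hypothesis implicit in the discussion), we may pick one and then read off all ingredients from Section~\ref{sec:ConformalDiracST}.

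First I would fix a $G$-invariant metric $g\in\sC$ and observe that $\phi_{*}g=g$ for every $\phi\in G$, so the conformal factor defined by (\ref{eq:TwistedST.conformal-factor}) satisfies $k_{\phi}\equiv 1$. Plugging this into (\ref{eq:TwistedST.Uphi}) shows that $U_{\phi}=V_{\phi}$ coincides with the spinorial pushforward; in particular $U_{\phi}$ is already unitary without any Jacobian correction, and the representation of $C^{\infty}(M)\rtimes G$ on $L^{2}_{g}(M,\sS)$ described in Section~\ref{sec:ConformalDiracST} is the usual representation associated to the isometric action of $G$. Plugging $k_{\phi}\equiv 1$ into (\ref{eq:TwistedST.automorphism-conformal-DiracST}) gives $\sigma_{g}(fu_{\phi})=fu_{\phi}$, i.e., $\sigma_{g}=\op{id}$ on $C^{\infty}(M)\rtimes G$. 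Consequently the twisted commutator $[\sD_{g},a]_{\sigma_{g}}$ is the ordinary commutator $[\sD_{g},a]$, and $( C^{\infty}(M)\rtimes G, L^{2}_{g}(M,\sS), \sD_{g})_{\sigma_{g}}$ is literally the ordinary equivariant Dirac spectral triple $(C^{\infty}(M)\rtimes G, L^{2}_{g}(M,\sS), \sD_{g})$.

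Since $\sigma_{g}=\op{id}$, the very definition of the Connes-Chern character of a twisted spectral triple (as recalled at the end of Section~\ref{sec:CC-character} and refined in Section~\ref{sec:smooth-twisted-ST}) coincides with Connes' original Connes-Chern character of the ordinary smooth spectral triple, both being represented by the \emph{same} cocycles $\overline{\tau}_{2q}^{\sD_{g}}$ (or $\tau_{2q}^{\sD_{g}}$ in the invertible case). Hence $\bCh(\sD_{g})_{\sigma_{g}}=\bCh(\sD_{g})$ in $\op{\mathbf{HP}}^{0}(C^{\infty}(M)\rtimes G)$. Finally, invoking Theorem~\ref{thm:CC.conf.inv}, the class $\bCh(\sD_{g})_{\sigma_{g}}$ is independent of the representative metric in $\sC$ and equals $\bCh(\sC)$ by definition, which yields the claimed identification.

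There is no serious obstacle here: everything is a matter of carefully unwinding definitions, and the main technical work has been done in Theorem~\ref{thm:CC.conf.inv}. The only point worth verifying in detail is that the smoothness and $p$-summability hypotheses used to define $\bCh$ still hold for the $G$-invariant choice (they do, as already established for every metric in $\sC$), and that the representation of $G$ through $V_{\phi}$ agrees on the nose with the usual isometric action, which is immediate from (\ref{eq:TwistedST.Vphi}) once $k_{\phi}=1$.
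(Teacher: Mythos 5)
Your proposal is correct and follows essentially the same route as the paper: fix a $G$-invariant metric via Ferrand--Obata, note that $k_{\phi}\equiv 1$ forces $U_{\phi}=V_{\phi}$ and $\sigma_{g}=\op{id}$, so the conformal Dirac spectral triple becomes an ordinary equivariant Dirac spectral triple whose Connes-Chern character is the untwisted one, and then invoke Theorem~\ref{thm:CC.conf.inv}. The only difference is presentational: you spell out explicitly that the defining cocycles $\overline{\tau}_{2q}^{\sD_{g},\sigma_{g}}$ reduce to the untwisted $\overline{\tau}_{2q}^{\sD_{g}}$, a step the paper leaves implicit.
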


The computation of the Connes-Chern character of an equivariant Dirac spectral triple $(C^{\infty}(M)\rtimes G,L^{2}_{g}(M,\sS),\sD_{g})$ 
  associated with any $G$-invariant metric $g\in\sC$ is carried out in~\cite{PW:JNCG16}. In order to present the results of~\cite{PW:JNCG16}  
  we need to introduce some notation. 

In what follows we let $g$ be a $G$-invariant metric in the conformal class $\sC$. Given $\phi \in G$ we denote by $M^{\phi}$ its 
fixed-point set. As $\phi$ preserves the orientation and the metric $g$, we have the following disjoint-union decomposition,
\begin{equation*}
    M^{\phi}=\bigsqcup_{n-a\in 2\N_{0}}M_{a}^{\phi}, \qquad \text{where } M_{a}^{\phi}=\left\{x\in M^{\phi};\ 
    \op{rk}(\phi'(x)-\op{id})=a\right\}, 
\end{equation*}
and each component $M_{a}^{\phi}$ is a submanifold of $M$ of dimension $a$. For $a=0,2,\ldots,n$ we let $\cN_{a}^{\phi}=(TM^{\phi})^{\perp}$ be the normal 
bundle of $M^{\phi}_{a}$; this is a smooth vector bundle over $M_{a}^{\phi}$. We denote by $\phi^{\cN}$ the isometric vector bundle isomorphism  
induced on  $\cN^{\phi}_{a}$ by $\phi'$. We note that the eigenvalues of $\phi^{\cN}$ are either $-1$ (which has even 
multiplicity) or complex conjugates $e^{\pm i 
\theta}$, $\theta \in (0,\pi)$, with same multiplicity. In addition, we shall orient $M^{\phi}_{a}$ like 
in~\cite[Prop.~6.14]{BGV:HKDO}, so that the vector bundle isomorphism $\phi^{\sS}:\sS\rightarrow \phi_*\sS$ gives rise 
to a section of $\Lambda^{n-a}(\cN^{\phi}_{a})^{*}$ 
which is positive with respect to the orientation of $\cN^{\phi}$ defined by the orientations of $M$ and $M^{\phi}_{a}$.

For $a=0,2,\ldots,n$, we let $\Omega(M_{a}^{\phi})=(\Omega^{\bt}(M_{a}^{\phi}),\wedge, d)$ be the differential graded 
algebra (DGA) of differential forms on $M_{a}^{\phi}$. We then define $\Omega(M^{\phi})$ as the DGA obtained as the 
direct sum of the DGAs  $\Omega(M_{a}^{\phi})$, $a=0,2,\ldots,n$. We shall  refer to elements of 
$\Omega^{\bt}(M^{\phi})$ as differential forms on $M^{\phi}$. Given $\omega\in \Omega^{\bt}(M^{\phi})$ we shall denote by $\restr{\omega}{M_{a}^{\phi}}$ its 
component in $\Omega^{\bt}(M_{a}^{\phi})$. Note that
\begin{equation*}
    \restr{(\omega^{1}\wedge \omega^{2})}{M_{a}^{\phi}}= \left(\restr{\omega^{1}}{M_{a}^{\phi}}\right) \wedge  
    \left(\restr{\omega^{2}}{M_{a}^{\phi}}\right) , \qquad \omega^{j}\in \Omega(M^{\phi}), \quad a=0,2,\ldots,n.
\end{equation*}
For $a=0,2,\ldots,n$, we let $\iota_{M_{a}^{\phi}}^{*}:\Omega(M)\rightarrow \Omega(M_{a}^{\phi})$ be the DGA map given 
by pulling-pack by the smooth inclusion $\iota_{M^{\phi}_{a}}:M_{a}^{\phi}\hookrightarrow M$. Taking direct sums of 
these DGA maps then provides us with a DGA map $\iota_{M^{\phi}}^{*}:\Omega(M)\rightarrow \Omega(M^{\phi})$. 
We also define the integration $\int_{M^{\phi}}:\Omega(M^{\phi})\rightarrow \C$ by 
\begin{equation}
    \int_{M^{\phi}}\omega:=\sum_{n-a\in 2\N_{0}}\int_{M_{a}^{\phi}}\left(\restr{\omega}{M_{a}^{\phi}}\right)^{(a)}, \qquad \omega \in 
    \Omega(M^{\phi}),
    \label{eq:Conformal.integration-Mphi}
\end{equation}where $(\restr{\omega}{M_{a}^{\phi}})^{(a)}\in \Omega^{a}(M^{\phi}_{a})$ is the top degree component of 
$\restr{\omega}{M_{a}^{\phi}}$.

In addition, as the Levi-Civita connection $\nabla^{TM}$ is preserved by $\phi$, for $a=0,2,\ldots, n$, it preserves the orthogonal splitting 
$\left.TM\right|_{M_{a}^{\phi}}=TM^{\phi}_{a}\oplus \cN^{\phi}_{a}$, and so it induces a connection 
$\nabla^{\cN^{\phi}_{a}}$ on $\cN^{\phi}_{a}$ in such a way that 
\begin{equation*}
   \left. \nabla^{TM}\right|_{|TM^{\phi}_{a}}=\nabla^{TM^{\phi}_{a}}\oplus \nabla^{\cN^{\phi}_{a}},
\end{equation*}where $\nabla^{TM^{\phi}_{a}}$ is the Levi-Civita connection of $TM^{\phi}_{a}$. 
Let $R^{TM^{\phi}_{a}}$ and $R^{\cN^{\phi}_{a}}$ be the respective curvatures of $\nabla^{TM^{\phi}_{a}}$ and 
$\nabla^{\cN^{\phi}_{a}}$. We associate with these curvatures the following characteristic forms,
\begin{equation}
    \hat{A}\left(R^{TM^{\phi}_{a}}\right):= {\det}^{\frac{1}{2}} \left(\frac{R^{TM^{\phi}_{a}}/2}{\sinh(R^{TM^{\phi}_{a}}/2)}\right) 
    \quad \text{and} \quad  
    \cV_{\phi}\left(R^{\cN^{\phi}_{a}}\right):={\det}^{-\frac{1}{2}}\left(1- \phi^{\cN}e^{-R^{\cN^{\phi}_{a}}}\right),
    \label{eq:Conformal.characteristic-forms}
\end{equation}where ${\det}^{-\frac{1}{2}}(1- \phi^{\cN}e^{-R^{\cN^{\phi}_{a}}})$ is defined in the same way as 
in~\cite[Section 6.3]{BGV:HKDO}. In particular, $\hat{A}(R^{TM^{\phi}_{a}})$ is the $\hat{A}$-form of the curvature 
$R^{TM^{\phi}_{a}}$ and the zeroth degree component of the characteristic form $ \cN_{\phi}(R^{\cN^{\phi}_{a}})$ is the 
Lefschetz number ${\det}^{-\frac{1}{2}}(1- \phi^{\cN})$. We then let $\Upsilon^{\phi}(R)\in \Omega(M^{\phi})$ be the 
differential form on $M^{\phi}$ defined by
\begin{equation}
  \restr{\Upsilon^{\phi}(R)}{M^{\phi}_{a}}=(-i)^{\frac{n}{2}}(2\pi)^{-\frac{a}{2}}\hat{A}\left(R^{TM^{\phi}_{a}}\right)\wedge 
 \cV_{\phi}\left(R^{\cN^{\phi}_{a}}\right), \qquad a=0,2,\ldots,n.
 \label{eq:Conformal.Upsilon-phi}
\end{equation}
Note that $\Upsilon^{\phi}(R)$ is a closed differential form on $M^{\phi}$. 

\begin{theorem}[{\cite[Theorem~7.8]{PW:JNCG16}}]
    Let $g$ be a $G$-invariant metric. Then the Connes-Chern character $\bCh(D)$ of the equivariant Dirac spectral 
    triple  $(C^{\infty}(M)\rtimes G, L^{2}_{g}(M,\sS), \sD_{g})$ is represented in 
    $\op{\mathbf{HP}}^{0}(C^{\infty}(M)\rtimes G)$ by the cocycle $\varphi^{\CM}=(\varphi_{2q})_{q\geq 0}$ defined by 
\begin{equation}
 \varphi_{2q}(f^{0}u_{\phi_{0}}, \cdots, f^{2q}u_{\phi_{2q}})= 
 \frac{1}{(2q)!}\int_{M^{\phi}}  \Upsilon^{\phi}(R)\wedge \iota_{M^{\phi}}^{*}\left(f^{0}d\hat{f}^{1} \wedge \cdots \wedge 
             d\hat{f}^{2q}\right), 
             \label{eq:Conformal.CM-cocycle}
\end{equation}where we have set $\phi:=\phi_{0}\circ \cdots \circ \phi_{2q}$
 and $\hat{f}^{j}:=f^{j}\circ \phi_{j-1}^{-1}\circ \cdots \circ \phi_{0}^{-1}$, $j=1,\ldots,2q$.
\end{theorem}
 
Combining this with Proposition~\ref{prop:representative.conformal.Chern.char} we obtain the following index formula in 
conformal-diffeomorphism invariant geometry.

\begin{theorem}\label{thm:LIF-Conformal-Geometry}
 Assume that the conformal structure $\sC$ is non-flat. 
\begin{enumerate}
    \item  For any $G$-invariant metric $g\in \sC$, the conformal Connes-Chern character $\bCh(\sC)$  
is represented in $\op{\mathbf{HP}}^{0}(C^{\infty}(M)\rtimes G)$ by the cocycle $\varphi$ associated with $g$ 
by~(\ref{eq:Conformal.CM-cocycle}). 

    \item  Let $\cE$ be a finitely generated projective module over $C^{\infty}(M)\rtimes G$. Then, for any metric $g\in \sC$ and any $\sigma_{g}$-connection $\nabla^{\cE}$ on $\cE$, 
we have
 \begin{equation*}
     \ind D_{\nabla^{\cE}}=\acou{\varphi}{\bCh(\cE)},
 \end{equation*}where $\bCh(\cE)$ is the Chern character of $\cE$ and 
 $\varphi$ is the cocycle~(\ref{eq:Conformal.CM-cocycle}) associated with any $G$-invariant metric in $\sC$.  
\end{enumerate}    
\end{theorem}

\begin{remark}[See also~\cite{Mo:LIFTST}] 
 For $q=\frac{1}{2}n$ the right-hand side~of~(\ref{eq:Conformal.CM-cocycle}) reduces to an integral 
over $M_{n}^{\phi}$. This submanifold is a disjoint union of connected components of $M$ and on each of those 
components $\phi$ is the identity. Therefore, if $M$ is connected, then $M^{\phi}$ is empty unless $\phi=\op{id}$. In 
this case we then have
\begin{equation}
\label{eq:transverse.fundamental.class}
    \varphi_{n}(f^{0}u_{\phi_{0}}, \cdots, f^{n}u_{\phi_{n}})= \left\{ 
    \begin{array}{ll} {\displaystyle \frac{(2i\pi)^{-\frac{n}{2}}}{n!}     \int_{M}  
            f^{0}d\hat{f}^{1} \wedge \cdots \wedge d\hat{f}^{n} } & \text{if $\phi_{0}\circ \cdots \circ 
            \phi_{n}=\op{id}$},\\
0     & \text{otherwise}.  
    \end{array}\right.
\end{equation}That is, $\varphi_{n}$ agrees with the transverse fundamental class cocycle of Connes~\cite{Co:Kyoto83}.  
\end{remark}

\begin{remark}
    The computation in~\cite{PW:JNCG16} of the Connes-Chern character in $\op{\mathbf{HP}}(C^{\infty}(M)\rtimes G)$ of an equivariant Dirac spectral triple 
    $(C^{\infty}(M)\rtimes G,L^{2}_{g}(M,\sS),\sD_{g})$ is divided into two main 
    steps. The first step consists in showing that the Connes-Chern character is represented in $\op{\mathbf{HP}}^0(C^{\infty}(M)\rtimes G)$ 
    (and not just in $\op{HP}^0(C^{\infty}(M)\rtimes G)$) by the CM cocycle. This requires extending 
    the local index formula of~\cite{CM:CMP93, CM:GAFA95} to the setting of smooth spectral triples. 
    The second step is the 
    explicit computation of that CM cocycle as an immediate byproduct of a new proof of the equivariant local index theorem of Patodi~\cite{Pa:BAMS}, Donnelly-Patodi~\cite{DP:T} and Gilkey~\cite{Gi:LNPAM} (see also~\cite{Az:RMJM,CH:ECCIDO}). 
\end{remark}

\section{Equivariant Cohomology and Mixed Equivariant Homology}\label{sec:group}
In this section, we recall the main background on group homology and equivariant cohomology that is needed in this paper. We also review the mixed equivariant homology of~\cite{Po:CRAS5}. For a more comprehensive accounts on group homology and equivariant cohomology we refer to the monographs~\cite{Br:CG, GS:Springer99}. 

Throughout this section we let $\Gamma$ be a group. By a $\Gamma$-space we shall mean a vector space equipped with a left action of $\Gamma$, i.e., a left 
$\C\Gamma$-module.

\subsection{Group homology and group cohomology}
The standard cyclic $\Gamma$-space  (or $\C\Gamma$-module) of $\Gamma$ is $C(\Gamma)=(C_\bt(\Gamma), d,s,t)$, where $C_m(\Gamma):=\C\Gamma^{m+1}$, $m \geq 0$, and the structural operators $d:C_\bt(\Gamma)\rightarrow C_{\bt-1}(\Gamma)$, $s:C_\bt(\Gamma)\rightarrow C_{\bt+1}(\Gamma)$, $t:C_\bt(\Gamma)\rightarrow C_{\bt}(\Gamma)$ are given by
\begin{gather}
 d(\psi_0,\ldots, \psi_m)= (\psi_0,\ldots, \psi_{m-1}), \qquad s(\psi_0,\ldots, \psi_m)= (\psi_m,\psi_0,\ldots, \psi_m),
 \label{eq:equivariant.cyclic-G-space-group-end-face} \\
 t(\psi_0,\ldots, \psi_m)= (\psi_{m},\psi_0,\ldots, \psi_{m-1}), \quad \psi_j\in \Gamma. 
\end{gather}
Each space $C_m(\Gamma)$ is equipped with the $\Gamma$-action such that
 $\psi \cdot (\psi_0,\ldots, \psi_m)= (\psi\psi_0,\ldots,\psi \psi_m)$ for all $\psi, \psi_0, \ldots, \psi_m$ in $\Gamma$. The operators $(d,s,t)$ are $\Gamma$-equivariant with respect to this action so that we get a cyclic $\Gamma$-space. 
 At the simplicial level we obtain the standard simplicial $\Gamma$-space of $\Gamma$ whose geometric realization is the universal $\Gamma$-bundle $E\Gamma$ (a.k.a.\ Milnor construction). This gives rise to a chain complex $(C_\bt(\Gamma), \partial)$, where the differential $\partial: C_\bt(\Gamma)\rightarrow C_{\bt-1}(\Gamma)$ is given by
\begin{equation}
\partial(\psi_0,\ldots, \psi_m) = \sum_{0\leq j \leq m} (-1)^j (\psi_0,\ldots,\hat{\psi}_{j},\ldots,\psi_{m}), \qquad \psi_{j}\in \Gamma.
\label{eq:equivariant.group-boundary}
\end{equation}

In what follows, given $\Gamma$-spaces $\sE_1$ and $\sE_2$ we equip the tensor product $\sE_1\otimes \sE_2$ (over $\C$) with the diagonal action of $\Gamma$. We then denote by $\sE_1\otimes_\Gamma \sE_2$ their tensor product over $\Gamma$, i.e., the quotient  of $\sE_1\otimes \sE_2$ by the action of $\Gamma$.  This ensures us that $(\psi\xi_1)\otimes_\Gamma (\psi\xi_2)= \xi_1\otimes_\Gamma \xi_2$ for all $\xi_j \in \Gamma$ and $\psi \in \Gamma$. Bearing this in mind, for any $\Gamma$-space, we form the chain complex $C(\Gamma, \sE):=(C_\bt(\Gamma, \sE), \partial)$, where $C_m(\Gamma, \sE)=C_m(\Gamma)\otimes_\Gamma \sE$. The homology of this complex is the \emph{homology of $\Gamma$ with coefficients in $\sE$} and is denoted by $H_\bt(\Gamma, \sE)$. In the case of the trivial $\Gamma$-space $\sE=\C$ we recover the homology of the classifying space $B\Gamma=E\Gamma\slash \Gamma$ (with coefficients in $\C$). 

There is a natural duality between $C_m(\Gamma,\sE)$ and the space $C^m(\Gamma,\sE)$ that consists of all $\Gamma$-equivariant maps $u:\Gamma^{m+1}\rightarrow \sE$. We thus obtain a cochain complex $(C^\bt(\Gamma, \sE),\partial)$, where $\partial: C^\bt(\Gamma, \sE)\rightarrow C^{\bt+1}(\Gamma, \sE)$ is given by
\begin{equation*}
( \partial u)(\psi_0,\ldots, \psi_{m+1}) = \sum_{0\leq j \leq m+1} (-1)^j u(\psi_0,\ldots,\hat{\psi}_{j},\ldots,\psi_{m+1}), \qquad \psi_{j}\in \Gamma. 
\end{equation*}
The cohomology of this cochain complex is the \emph{cohomology of $\Gamma$ with coefficients in $\sE$} and is denoted by $H^\bt(\Gamma, \sE)$. When $\sE=\C$ we recover the cohomology of $B\Gamma$. 
 
 We have a natural cup product $\smile: C^p(\Gamma, \sE_1)\times C^q(\Gamma, \sE_2)\rightarrow C^{p+q}(\Gamma, \sE_1\otimes \sE_2)$ given by 
\begin{equation*}
 (u\smile v) (\psi_0,\ldots, \psi_{p+q}) = u(\psi_{q},\ldots, \psi_{p+q})\otimes  v(\psi_{0},\ldots, \psi_{q}), \qquad \psi_j \in \Gamma.  
\end{equation*}
 This cup product is associative and compatible with the group coboundary $\partial$. Therefore, it induces a (graded associative) cup product on group cohomology. In particular, this turns $H^\bt(\Gamma, \C)$ into a graded ring. By duality we get a cap product $\frown:  C^p(\Gamma, \sE_1)\times C_{p+q}(\Gamma, \sE_2)\rightarrow C_q(\Gamma, \sE_1\otimes \sE_2)$ given by
\begin{equation}
 u \frown \left[(\psi_0,\ldots, \psi_{p+q})\otimes_\Gamma \xi\right] = (\psi_p,\ldots, \psi_{p+q})\otimes_\Gamma \left[ u(\psi_0, \ldots, \psi_p) \otimes \xi\right], \quad \psi_j\in \Gamma, \ \xi\in \sE_2. 
 \label{eq:equivariant.cap-product}
\end{equation}
This cap product is compatible with the group boundary and coboundary, as well as with the cup product. Therefore, it descends to a cap product between group cohomology and group homology which is compatible with the cup product. In particular, when $\sE_1=\C$ we obtain a graded (left) action of the cohomology ring $H^\bt(\Gamma,\C)$ on the group homology $H_\bt(\Gamma, \sE_2)$. 

\subsection{Equivariant cohomology}
From now on we assume that $\Gamma$ acts on a manifold $M$ by (smooth) diffeomorphisms. The equivariant cohomology of $M$ is the cohomology of the homotopy quotient $E\Gamma\times_\Gamma M$ (a.k.a.\ Borel construction).  This cohomology can be defined by using the bicomplex of Bott~\cite{Bott:LNM652} as follows. This bicomplex is  $C_\Gamma(M):=(C^{\bt,\bt}_\Gamma(M), \partial, d)$, where $C_\Gamma^{p,q}(M)=C^p(\Gamma, \Omega^q(M))$ and we regard the de Rham differential as an operator $d: C^{\bt,\bt}_\Gamma(M) \rightarrow C^{\bt,\bt+1}_\Gamma(M)$. It gives rise to the total complex $\Tot(C_\Gamma(M))=(\Tot^\bt(C_\Gamma(M)),d^\dagger)$, where 
$\Tot^m(C_\Gamma(M))=\bigoplus_{p+q=m} C^p(\Gamma, \Omega^q(M))$ and $d^\dagger=\partial +(-1)^p d$ on $C^p(\Gamma, \Omega^q(M))$. The \emph{equivariant cohomology} of the $\Gamma$-manifold $M$ is the cohomology of the cochain complex $\Tot(C_\Gamma(M))$. It is denoted by $H^\bt_\Gamma(M)$. 

We also define the \emph{even/odd equivariant cohomology} $H^{\ev/\odd}_\Gamma(M)$ as the cohomology of the cochain complex $(C^{\ev/\odd}_\Gamma(M), d^\dagger)$, where $C^{\ev/\odd}_\Gamma(M)=\prod_{\text{$p+q$ even/odd}} C^p(\Gamma, \Omega^q(M))$. This is a natural receptacle for the construction of equivariant characteristic classes. In particular, given any equivariant vector bundle $E$ over $M$, we have a uniquely defined equivariant Chern character $\Ch_\Gamma(E)\in H^\ev_\Gamma(M)$. Following Getzler~\cite{Ge:AM}, given any connection $\nabla^E$ on $E$, we can represent this equivariant Chern character by an explicit equivariant cochain $\Ch_\Gamma (\nabla^E)\in C^\ev_\Gamma(M)$ as follows. 

As $E$ is a  $\Gamma$-equivariant bundle, any $\psi\in \Gamma$ lifts to a vector bundle isomorphism 
$\psi^{E}$ on $E$. In particular, this enables us to pushforward $\nabla_{0}^{E}$ to the connection $\psi_{*}\nabla^{E}:=\left( \psi_{*}\otimes \psi^{E}_{*}\right)\circ \nabla^{E}\circ \left(\psi^{E}_{*}\right)^{-1}$, where $\psi_{*}$ and $\psi^{E}_{*}$ are the respective actions of $\psi$ on $C^{\infty}(M,T^{*}M)$ and 
$C^{\infty}(M,E)$. The equivariant Chern form is $\Ch_\Gamma(\nabla^E)=(\Ch^p_\Gamma(\nabla^E))_{p\geq 0}$, where the components $\Ch^p_\Gamma(\nabla^E)\in C^p(\Gamma,\Omega^\bt(M))$ are given by
\begin{gather}
 \Ch_\Gamma^0(\nabla^E)(\psi_0)= \Ch\left( (\psi_0)_*\nabla^E\right), 
 \label{eq:equivariant.Chern-form0}\\
  \Ch_\Gamma^p(\nabla^E)(\psi_0,\ldots, \psi_p)= (-1)^p \CS\left( (\psi_0)_*\nabla^E, \ldots, (\psi_p)_*\nabla^E\right) , \quad \psi_j\in \Gamma.
   \label{eq:equivariant.Chern-form+}
\end{gather}
Here $ \Ch\left( (\psi_0)_*\nabla^E\right)$ is the Chern form of the connection $(\psi_0)_*\nabla^E$ and $\CS\left( (\psi_0)_*\nabla^E, \ldots, (\psi_p)_*\nabla^E\right)$ is the Chern-Simons form of the connections $(\psi_0)_*\nabla^E, \ldots, (\psi_p)_*\nabla^E$ as defined in~\cite{Ge:AM}. Note that $\Ch_\Gamma^p(\nabla^E)$ is an element of  $C^p(\Gamma, \Omega^\ev(M))$ (resp., $C^p(\Gamma, \Omega^\odd(M))$) when $p$ is even (resp., odd).

When $\Gamma$ is a compact connected Lie group, the equivariant cohomology $H^\bt_\Gamma(M)$ can be described equivalently in terms of the Cartan model of equivariant differentiel forms  and the Weil model (see~\cite{Ca:Paris50, GS:Springer99, MQ:Topo86}).

\subsection{Mixed Equivariant Homology}
We also can define the equivariant homology of the $\Gamma$-manifold $M$ by using a chain version of Bott's bicomplex. For our purpose it is more convenient to use the mixed equivariant homology of~\cite{Po:CRAS5}. Following~\cite{Po:CRAS5} the equivariant mixed bicomplex of $M$  is obtained as the tensor product over $\Gamma$ of the mixed complexes $(C_\bt(\Gamma),\partial,0)$ and $(\Omega^\bt(M),0,d)$. Thus, this is  $C(\Gamma, M):=(C_{\bt, \bt}(\Gamma,M), \partial, 0,0,d)$, where $C_{p,q}(\Gamma, M)=C_p(\Gamma, \Omega^q(M))$, $p,q\geq 0$. This gives rise to the ``total" mixed complex $\Tot(C(\Gamma, M))= ( \Tot_\bt(C(\Gamma, M)),\partial, (-1)^pd)$, where $\Tot_m(C(\Gamma, M))=  \bigoplus_{p+q=m} C_p(\Gamma, \Omega^q(M))$. The cyclic homology of this mixed complex is called the \emph{mixed equivariant homology} of the $\Gamma$-manifold $M$ and is denoted by $H^\Gamma_\bt(M)^\sharp$. A mixed equivariant chain in $\Tot_m(C(\Gamma, M))^\natural$ is of the form $\omega =(\omega_{p,q})_{m-(p+q)\in 2\N_0}$ where $\omega_{p,q}\in \C\Gamma^{p+1}\otimes_{\Gamma} \Omega^q(M)$. This gives a cycle iff 
$\partial \omega_{p,q}=0$ when $p+q=m$ and $\partial \omega_{p+1,q}+(-1)^pd \omega_{p,q-1}=0$ when $m-(p+q)\in 2\N$. 

The periodic cyclic homology of the equivariant mixed complex $\Tot(C(\Gamma, M))$ is called the \emph{even/odd mixed equivariant homology} of $M$ and is denoted by $H^\Gamma_{\ev/\odd}(M)^\sharp$. Note that the periodic cyclic complex is $(C_{\ev/\odd}(\Gamma, M), \partial+(-1)^pd)$, where $C_{\ev/\odd}(\Gamma, M)=\prod_{\text{$p+q$ even/odd}} C_p(\Gamma, \Omega^q(M))$. 

The mixed equivariant homology is the natural receptacle for the cap product between equivariant cohomology and group homology. More precisely, the cap product~(\ref{eq:equivariant.cap-product}) gives a cap product $\frown: C^p(\Gamma, \Omega^q(M)) \times C_m(\Gamma, \C) \rightarrow C_{m-p}(\Gamma, \Omega^q(M))$. This is a differential bilinear map which is compatible with the group differentials and the de Rham differential. Therefore, it gives rise to a (graded) cap product, 
\begin{equation}
 \frown: H^{\ev/\odd}_\Gamma(M) \times H_{\bt}(\Gamma, \C) \longrightarrow H^\Gamma_{\ev/\odd}(M)^\sharp. 
 \label{eq:equivariant.cap-product-mixed}
\end{equation}
In particular, the cap products of equivariant cohomology classes with group homology classes naturally give rise to classes in $H^\Gamma_{\ev/\odd}(M)^\sharp$. 

\section{Cyclic Homology and Group Actions on Manifolds}\label{sec:cyclic-homology-crossed-product}
There is a large amount of work on the cyclic homology of crossed-product algebras, especially in the case of group actions on manifolds or varieties (see, e.g., \cite{BC:CCDG, Br:AIF87, Br:Preprint87, BN:KT94, BDN:AIM17, Co:Kyoto83, Co:NCG, Cr:KT99, FT:LNM87, GJ:Crelle93, NPPT:Crelle06, Ni:InvM90, Po:CRAS4, Po:CRAS5}). In his seminal work on cyclic cohomology and foliations Connes~\cite{Co:Kyoto83} constructed an explicit cochain map and quasi-isomorphism from equivariant cohomology into the homogeneous component of the periodic cyclic cohomology. However, since then it was not until the recent notes~\cite{Po:CRAS4, Po:CRAS5} that further explicit quasi-isomorphisms were exhibited.  In this section, we briefly describe the results of~\cite{Po:CRAS4, Po:CRAS5} in the case of group actions on manifolds. 

Throughout this section we let $\Gamma$ be a group acting by smooth diffeomorphisms on a closed manifold $M^n$. We let $\cA$ be the Fr\'echet algebra $C^\infty(M)$. The action of $\Gamma$ gives rise to an action on $\cA$ given by~(\ref{eq:ConformalCCC.action-on-functions}). We also denote by $\cA_\Gamma$ the crossed-product algebra $C^\infty(M)\rtimes \Gamma$, which we endow with the locally convex algebra topology described in Section~\ref{sec:Conformal-CC-character}.

\subsection{Preamble 1: Paracyclic and cylindrical modules} In the following two preambles we let $\K$ be an arbitrary unital ring.  
In the context of group actions on algebras, we are naturally lead to go beyond the scope of the mixed complexes and cyclic modules described in Section~\ref{sec:CyclicCohomChernChar}. According to Getzler-Jones~\cite{GJ:Crelle93}, a \emph{parachain complex} is given by the datum of $(C_\bt,b,B)$, where $C_m$, $m\geq 0$, are $\K$-modules and $b:C_\bt \rightarrow C_{\bt-1}$ and $B:C_\bt\rightarrow C_{\bt+1}$ are $\K$-module maps such that $b^2=B^2=0$ and $bB+Bb=1-T$, where $T:C_\bt\rightarrow C_\bt$ is some invertible $\K$-module map. Given a parachain complex $C=(C_\bt,b,B)$  we also can 
form a cyclic complex $C^\natural=(C^\natural_\bt, b+SB)$ as in the case of mixed complexes. 
This need not be a chain complex (unless $C$ is a mixed complex), but we obtain a para-$S$-module (\emph{cf}.\ Preamble~2). 

Following~\cite{GJ:Crelle93}, a \emph{paracyclic $\K$-module} is given by the datum of $(C_\bt, d,s,t)$, where $C_m$, $m\geq 0$, are $\K$-modules and the $\K$-module maps $d:C_\bt\rightarrow C_{\bt-1}$, $s:C_\bt\rightarrow C_{\bt+1}$ and $t:C_\bt \rightarrow C_\bt$ define a simplicial module structure in the same way as cyclic modules, but the cyclicity of the operator $t$ is replaced by the relation $t=ds$ and the requirement that $t$ is invertible. Any paracyclic $\K$-module gives rise to a parachain complex $(C_\bt, b,B)$, where $b$ is defined in the same way as with cyclic $\K$-modules and $B=(1-\tau)s'N$ with $s'=sb's$ and $b'=\sum_{j=0}^{m-1} (-1)^j d_j=b-d$ on $C_m$. In that case $T=1-(bB+Bb)=t^{m+1}$ on $C_m$. (When $C$ is a cyclic module we obtain a mixed complex which is isomorphic to its usual mixed complex.) 

A \emph{parachain bicomplex} is given by the datum of $(C_{\bt,\bt}, \wb, \wB, b,B)$, where $C_{p,q}$, $p,q\geq 0$, are $\K$-modules, $(C_{\bt,q},\wb, \wB)$ and $(C_{p,\bt},b,B)$ are parachain complexes for all $p,q\geq 0$, and the horizontal differentials $(\wb, \wB)$ both commute with each of the vertical differentials $(b,B)$.  
We have a \emph{cylindrical complex} when the operators $\wT=1-(\wb\wB+\wB\wb)$ and $T=1-(bB+Bb)$ are inverses of each other 
(\emph{cf}~\cite{GJ:Crelle93}). 
As observed in~\cite{GJ:Crelle93}, any cylindrical complex gives rise to a total mixed complex $\Tot(C)=(\Tot_\bt(C),b^\dagger, B^\dagger)$, where $\Tot_m(C)=\bigoplus_{p+q=m} C_{p,q}$ and $b^\dagger=\wb +(-1)^pb$ and $B^\dagger =\wB+(-1)^p \wT B$ on $C_{p,q}$.

A \emph{bi-paracyclic $\K$-module} is given by the datum of $(C_{\bt,\bt}, \wbd, \ws, \wt, d,s,t)$, where $C_{p,q}$, $p,q\geq 0$, are $\K$-modules, $(C_{\bt,q},\wbd, \ws, \wt)$ and $(C_{p,\bt},d,s,t)$ are paracyclic $\K$-modules for all $p,q\geq 0$, and all the horizontal operators $(\wbd, \ws, \wt)$ commute with each of the vertical operators $(d,s,t)$. 
We have a \emph{cylindrical $\K$-module} when $\wt^{p+1}t^{q+1}=1$ on $C_{p,q}$. 
Any cyclindrical $\K$-module $C=(C_{\bt,\bt}, \wbd, \ws, \wt, d,s,t)$ gives rise to a cyclindrical complex
$(C_{\bt,\bt}, \wb, \wB, b,B)$, where $(\wb, \wB)$ (resp., $(b,B)$) are defined as above by using $(\wbd, \ws, \wt)$ (resp., $(d,s,t)$). This gives rise to a total mixed complex $\Tot(C)$. We also obtain a diagonal cyclic $\K$-module $\Diag(C)=(\Diag_\bt(C), \wbd d, \ws s, \wt t)$, with $\Diag_m(C)=C_{m,m}$, $m\geq 0$. 

\subsection{Preamble~2: Triangular $S$-modules}
The $S$-modules of Jones-Kassel~\cite{JK:KT89,Ka:Crelle90} encapsulate various approaches to cyclic homology. More generally, according to~\cite{Po:CRAS4} a \emph{para-$S$-module} is given by the datum of $(C_\bt, b, S)$, where $C_m$, $m\geq 0$, are $\K$-modules and $d:C_\bt \rightarrow C_{\bt-1}$ and $S:C_\bt \rightarrow C_{\bt -2}$ are $\K$-module maps commuting with each other such that $d^2=(1-T)S$ where $T:C_\bt\rightarrow C_\bt$ is some $\K$-module map commuting with both $d$ and $S$. When $d^2=0$ we obtain an $S$-module. For instance, if $C=(C_\bt, b,B)$ is a parachain complex, then we can define its cyclic complex of the para-$S$-module $C^\natural=(C^\natural_\bt, b+SB, S)$.

According to~\cite{Po:CRAS4}, a (left) \emph{triangular para-$S$-module} is given by the datum of $(C_{\bt,\bt},d, b,B, S)$, where $C_{p,q}$, $p,q\geq 0$, are $\K$-modules, 
 $(C_{\bt,q}, d, S)$ is a para-$S$-module and $(C_{p,\bt}, b,B)$ is a parachain complex for all $p,q\geq 0$,  
 the horizontal operators $(d,S)$ both commute with each of the vertical differentials $(b,B)$. We say that we have a \emph{(left) triangular $S$-module} when  $d^2+S(bB+Bb)=0$. Any cylindrical complex gives rise to a triangular $S$-module (\emph{cf}.~\cite{Po:CRAS4}). Triangular (para-)$S$-modules provide us with a natural framework for defining the tensor product of (para-)$S$-modules with mixed and parachain complexes (see Subsection~\ref{subsec:HCrossed.infinite-order} below). 
  
 Any triangular para-$S$-module $C=(C_{\bt,\bt},\wbd, b,B, S)$ gives rise to a \emph{total para-$S$-module} $\Tot(C)=(\Tot_\bt(C), d^\dagger, S)$, where $\Tot_m(C)= \bigoplus_{p+q=m} C_{p,q}$ and $d^\dagger = d + (-1)^p (b+SB)$ on $C_{p,q}$. When $C$ is a triangular $S$-module the condition $d^2+S(bB+Bb)=0$ precisely ensures us that $(d^\dagger)^2=0$, and so we actually obtain an $S$-module.  Furthermore, the filtration by columns of $\Tot_\bt(C)$ is a filtration of chain complexes, and so it gives rise to a spectral sequence converging to $H_{\bt}(\Tot(C))$. The spectral sequence of Getzler-Jones~\cite{GJ:Crelle93} is an instance of such a spectral sequence (\emph{cf}.~\cite{Po:CRAS4}). 

\subsection{The cylindrical space $\bC^\phi(\Gamma, \cA)$} As we shall see, a central role in the computation of the cyclic homology of $\cA_\Gamma$ is played by some cylindrical spaces $\bC^\phi(\Gamma,\cA)$ that are defined as follows. 

Suppose that $\phi$ is a \emph{central} element of $\Gamma$. We can use $\phi$ to twist the cyclic structure of the cyclic $\Gamma$-space $C(\Gamma)$. We obtain the paracyclic $\Gamma$-space $C^\phi(\Gamma)=(C_\bt(\Gamma),d,s_\phi, t_\phi)$, where $d$ is as in~(\ref{eq:equivariant.cyclic-G-space-group-end-face}) 
and the operators $s_\phi:C_\bt(\Gamma)\rightarrow C_{\bt+1}(\Gamma)$ and $t_\phi:C_\bt(\Gamma)\rightarrow C_{\bt}(\Gamma)$ are given by
\begin{align*}
 s_\phi(\psi_0,\ldots,\psi_m)&= (\phi^{-1}\psi_m,\psi_0,\ldots, \psi_m),\\  
 t_\phi(\psi_0,\ldots,\psi_m)&= (\phi^{-1}\psi_m,\psi_0,\ldots, \psi_{m-1}), \quad \psi_j\in \Gamma. 
\end{align*}
The simplicial space structures of $C^\phi(\Gamma)$ and $C(\Gamma)$ agree, and so the $b$-differential of $C^\phi(\Gamma)$ is the group differential $\partial$ given by~(\ref{eq:equivariant.group-boundary}). We also observe that $t_\phi^{m+1}(\psi_0,\ldots,\psi_m)=(\phi^{-1}\psi_0,\ldots,\phi^{-1}\psi_m)$, i.e., $t_\phi^{m+1}$ agrees with the action of $\phi^{-1}$ on $C_{m}(\Gamma)$.

We also can twist the cyclic structure of the cyclic $\Gamma$-space $\bC(\cA)$. We get the paracyclic $\Gamma$-space $\bC^\phi(\cA)=(\bC_\bt(\cA), d_\phi, s,t_\phi)$, 
where $s$ is given by~(\ref{eq:cyclic.extra-degeneracy-chain}) and the operators $d_\phi: \bC_\bt(\cA)\rightarrow \bC_{\bt-1}(\cA)$ and $t_\phi: \bC_\bt(\cA)\rightarrow \bC_\bt(\cA)$ are given by
\begin{align*}
 d_\phi(f^0\otimes \cdots \otimes f^m)&=[(\phi^{-1}\cdot f^m)f^0]\otimes f^1\otimes \cdots \otimes f^{m-1},\\
 t_\phi(f^0\otimes \cdots \otimes f^m)&=(\phi^{-1}\cdot f^m)\otimes f^0\otimes \cdots \otimes f^{m-1}, \quad f^j\in \cA. 
\end{align*}
Note also that $t^{m+1}_\phi$ agrees with the action of $\phi^{-1}$ on $\bC_m(\cA_\Gamma)$. 

The cylindrical space $\bC^\phi(\Gamma, \cA)$ is the tensor product over $\Gamma$ of the paracyclic $\Gamma$-spaces $C^\phi(\Gamma)$ and $\bC^\phi(\cA)$. Namely, $\bC^\phi(\Gamma, \cA)=(\bC_{\bt, \bt}(\Gamma, \cA), \wbd, \ws_\phi, \wt_\phi, d_\phi, s, t_\phi)$, where $\bC_{p, q}(\Gamma, \cA)= C_p(\Gamma)\otimes_\Gamma \bC_q(\cA)$, and the horizontal (resp., vertical) structural operators $(\wbd, \ws_\phi, \wt_\phi)$ (resp., $(d_\phi, s, t_\phi)$) are given by the structural operators of $C^\phi(\Gamma)$ (resp., $\bC^\phi(\cA)$). We obtain a cyclindrical space because in degree $m$ the operator $\wt_\phi^{m+1}t_\phi^{m+1}$ arises from the diagonal action of $\phi^{-1}$ on $C_m(\Gamma)\otimes \bC_m(\cA)$, and hence it agrees with the identity map on $C_m(\Gamma)\otimes_\Gamma \bC_m(\cA)$. This gives rise to a diagonal cyclic space $\Diag( \bC^\phi(\Gamma, \cA))$ and a total mixed complex 
$\Tot( \bC^\phi(\Gamma, \cA))$. By the Eilenberg-Zilber theorem for bi-paracyclic spaces~\cite{KR:CMB04, Po:CRAS3, ZW:CMB14}, at the level of their cyclic complexes we have an explicit chain homotopy equivalence 
$\Tot_\bt\left(\bC^\phi(\Gamma_\phi,\cA)\right)^\natural \xrightleftharpoons[\AW^\natural]{\shuffle^\natural} \Diag_\bt\left(\bC^\phi(\Gamma_\phi,\cA)\right)^\natural$, where $\shuffle^\natural$ and $\AW^\natural$ are $S$-maps whose zeroth degree components are the usual shuffle and Alexander-Whitney maps. At the periodic level we also have a chain homotopy equivalence $\Tot_\bt\left(\bC^\phi(\Gamma_\phi,\cA)\right)^\sharp \xrightleftharpoons[\AW^\sharp]{\shuffle^\sharp} \Diag_\bt\left(\bC^\phi(\Gamma_\phi,\cA)\right)^\sharp$, where $\shuffle^\sharp$ and $\AW^\sharp$ are the respective extensions of $\shuffle^\natural$ and $\AW^\natural$ to periodic chains. 

\subsection{Splitting along conjugacy classes}  Given any $\phi\in \Gamma$, we denote by $[\phi]$ its conjugacy class. In addition, we let $[\Gamma]$ be the set of conjugacy classes of $\Gamma$. 
Given any $\phi \in \Gamma$ and $m\geq 0$,  we let $\bC_m(\cA_\Gamma)_{[\phi]}$ be the closure in $\bC(\cA_\Gamma)$ of the subspace spanned by elementary tensor products $f^{0}u_{\phi_{0}}\otimes \cdots \otimes f^{m}u_{\phi_{m}}$, with $f^0,...,f^m$ in $\cA$ and $\phi_0,\ldots, \phi_m$ in $\Gamma$ such that $\phi_0 \cdots \phi_m\in [\phi]$. The subspace $\bC_\bt(\cA_\Gamma)_{[\phi]}$ is preserved by the structural operators $(d,s,t)$ in~(\ref{eq:cyclic.end-face-chain})--(\ref{eq:cyclic.cyclic-operator-chain}). 
We thus obtain a cyclic subspace $\bC(\cA_\Gamma)_{[\phi]}= (\bC_\bt(\cA_\Gamma)_{[\phi]}, d,s,t)$. We then have a direct-sum decomposition of cyclic spaces, 
\begin{equation}
     \bC_\bt(\cA_{\Gamma})=\bigoplus_{[\phi]\in [\Gamma]} \bC_\bt(\cA_{\Gamma})_{[\phi]},
     \label{eq:HCrossed.splitting-conjugacy-cyclic-space}
\end{equation}
where the summation goes over all conjugacy classes. 

In what follows, given $\phi \in \Gamma$, we denote by $\bC^\natural(\cA_\Gamma)_{[\phi]}$ and $\bC^\sharp(\cA_\Gamma)_{[\phi]}$ the respective cyclic and periodic complexes of the cyclic space $\bC_\bt(\cA_\Gamma)_{[\phi]}$. We also denote by $\bHC_\bt(\cA_\Gamma)_{[\phi]}$ and  $\bHP_\bt(\cA_\Gamma)_{[\phi]}$ their respective homologies. At the cyclic level the decomposition~(\ref{eq:HCrossed.splitting-conjugacy-cyclic-space}) gives rise to direct-sum decompositions, 
\begin{equation*}
  \bC_\bt^\natural(\cA_{\Gamma})=\bigoplus_{[\phi]\in [\Gamma]} \bC^\natural_\bt(\cA_\Gamma)_{[\phi]} \quad \text{and} \quad  \bHC_\bt(\cA_\Gamma)=\bigoplus_{[\phi]\in [\Gamma]}  \bHC_\bt(\cA_\Gamma)_{[\phi]}. 
\end{equation*}
At the periodic level $\bC^\sharp(\cA_\Gamma)_{[\phi]}$  and  $\bHP_\bt(\cA_\Gamma)_{[\phi]}$ are direct summands of  $\bC^\sharp(\cA_\Gamma)$  and  $\bHP_\bt(\cA_\Gamma)$, respectively. Therefore, we have direct-summand inclusions, 
\begin{equation*}
\bigoplus_{[\phi]\in [\Gamma]} \bC^\sharp_\bt(\cA_\Gamma)_{[\phi]} \subset   \bC_\bt^\sharp(\cA_{\Gamma}) \quad \text{and} \quad 
\bigoplus_{[\phi]\in [\Gamma]}  \bHP_\bt(\cA_\Gamma)_{[\phi]} \subset 
 \bHP_\bt(\cA_\Gamma). 
\end{equation*}
These inclusions are onto when there is a finite number of conjugacy classes. 

Given $\phi \in \Gamma$, let us denote by $\Gamma_\phi$ its centralizer in $\Gamma$. As $\phi$ is a central element of $\Gamma_\phi$, we may form the cylindrical complex $\bC^\phi(\Gamma_\phi,\cA)$ as above. We have a natural embedding of cyclic spaces
$\mu_\phi: \Diag_\bt(\bC^\phi(\Gamma_\phi,\cA))\rightarrow \bC_\bt(\cA_\Gamma)_{[\phi]}$ given by 
  \begin{multline*}
 \mu_{\phi}\left((\psi_{0},\ldots,\psi_{m})\otimes_{\Gamma_\phi}(f^{0}\otimes \cdots \otimes f^{m})\right)= \\ 
    [(\psi_{m}^{-1}\phi)\cdot f^{0}]u_{\phi\psi_{m}^{-1} \psi_{0}}\otimes 
     ( \psi_{0}^{-1}\cdot f^{1})u_{\psi_{0}^{-1}\psi_{1}}\otimes \cdots \otimes ( \psi_{m-1}^{-1}\cdot f^{m})u_{\psi_{m-1}^{-1}\psi_{m}}. 
 \end{multline*}
 This  cyclic space embedding gives rise to quasi-isomorphisms at the level the cyclic and periodic complexes.  We thus obtain 
 explicit quasi-isomorphisms,   
 \begin{equation}
\Tot_\bt\left(\bC^\phi(\Gamma_\phi,\cA)\right)^\natural\xrightleftharpoons[\AW^\natural]{\shuffle^\natural} \Diag_\bt\left(\bC^\phi(\Gamma_\phi,\cA)\right)^\natural \xrightarrow{\mu_\phi} \bC_\bt(\cA_\Gamma)_{[\phi]}^\natural. 
\label{eq:quasi-isomorphism-CphiGA-CAGphi}
\end{equation}
There are analogous quasi-isomorphisms between the corresponding periodic cyclic complexes. 

All this shows that the computation of the cyclic homology and periodic cyclic homology of $\cA_{\Gamma}$ reduces to that of the mixed complexes  $\Tot_\bt(\bC^\phi(\Gamma_\phi,\cA))$, $\phi \in \Gamma$. 

\subsection{A twisted Connes-Hochschild-Kostant-Rosenberg Theorem} 
Let $\phi\in \Gamma$ and denote by $M^\phi$ its fixed-point set in $M$. We shall say that the action of $\phi$ on $M$ is \emph{clean} when, 
 for every $x_0\in M^\phi$,  the following two conditions are satisfied:
\begin{enumerate}
\item[(i)] The fixed-point set $M^\phi$ is a submanifold of $M$ near $x_0$. 

\item[(ii)] We have $T_{x_0}M^\phi = \ker (\phi'(x_0)-1)$ and $T_{x_0}M = T_{x_0}M^\phi \oplus \op{ran} (\phi'(x_0)-1)$.  
\end{enumerate}
We have a clean action whenever $\phi$ preserves an affine connection or even a Riemannian metric. In particular, we always have a clean action when $\phi$ has finite order. 

From now on we assume that the action of $\phi$ on $M$ is clean. For $a=0,1,\ldots, \dim M$, set $M^\phi_a:=\{x\in M^\phi; \ \dim \ker (\phi'(x)-1)=a\}$. Each subset $M^\phi_a$ is a submanifold of $M$, and so we have a disjoint union stratification  $M^\phi=\bigsqcup_a M^\phi_a$. This enables us to define the de Rham complex $\Omega(M^\phi)= (\Omega^\bt(M^\phi),d)$ as the direct sum of the de Rham complexes $\Omega(M^\phi_a)$. Note also that each component $M_a^\phi$ is preserved by the action of the centralizer $\Gamma_\phi$. Let $(\bC^\phi(\cA), b_\phi, B_\phi)$ be the parachain complex of the paracyclic $\Gamma$-space. Regarding $\Omega(M^\phi)$ as a mixed complex $(\Omega^\bt(M^\phi),0,d)$ we then have a map of parachain complexes $\alpha^{\phi}:\bC^\phi_\bt(\cA) \rightarrow \Omega^{\bt}(M^\phi)$ given by 
\begin{equation}
\alpha^\phi(f^0\otimes \cdots \otimes f^m)=\frac{1}{m} \sum_a (f^0df^1\wedge \cdots \wedge df^m)_{|M^\phi_a}, \qquad f^j \in \cA.
\label{eq:HKR-map}
\end{equation}

\begin{proposition}[{{\rm Brylinski~\cite{Br:AIF87}, Brylinski-Nistor~\cite{BN:KT94}}}]\label{prop:twisted-CHKR}
 Let $\phi \in \Gamma$ acts cleanly on $M$. Then the chain map $\alpha^{\phi}:(\bC^\phi_\bt(\cA),b_\phi) \rightarrow (\Omega^{\bt}(M^\phi),0)$ is a quasi-isomorphism. 
\end{proposition}

\subsection{The Cyclic space $\bC(\cA_\Gamma)_{[\phi]}$ when $\phi$ has finite order} Let $\phi\in \Gamma$ have finite order $r$. Let $C^\flat(\Gamma_\phi)$ be the mixed complex $(C_\bt(\Gamma_\phi),\partial,0)$. We have a $\Gamma_\phi$-equivariant map of parachain complexes 
$(\varepsilon \nu_\phi): C^\phi_\bt(\Gamma_\phi) \rightarrow C^\flat_\bt(\Gamma_\phi)$, where $\nu_\phi: C^\phi_\bt(\Gamma_\phi) \rightarrow C_\bt(\Gamma_\phi)$ and $\varepsilon: C_\bt(\Gamma_\phi) \rightarrow C^\flat_\bt(\Gamma_\phi)$ are projections given by 
\begin{gather}
 \nu_\phi(\psi_0,\ldots, \psi_m)= \frac{1}{r^{m+1}}\sum_{0\leq j \leq m} \sum_{0\leq \ell_j \leq r-1}(\phi^{\ell_0}\psi_0,\ldots, \phi^{\ell_m}\psi_m),\\
 \varepsilon(\psi_0,\ldots, \psi_m) = \frac{1}{(m+1)!} \sum_{\sigma \in \fS_m} (\psi_{\sigma^{-1}(0)}, \ldots, \psi_{\sigma^{-1}(m)}), \quad \psi_j\in \Gamma_\phi,
 \label{eq:HCrossed.antisymmetrization}
\end{gather}
where $\fS_m$ is the group of permutations of $\{0,\ldots, m\}$. This gives rise to explicit chain homotopy equivalences between the cyclic complexes of $C^\phi(\Gamma_\phi)$ and $C^\flat(\Gamma_\phi)$, as well as between their periodic cyclic complexes (\cite{Bu:CMH85, Ma:BCP86, Po:CRAS4}). 

As $\phi$ has finite order, its action on $M$ is clean. We define the equivariant bicomplex $C_{\Gamma_\phi}(M^\phi)$ and the equivariant mixed bicomplex $C(\Gamma_\phi,M^\phi)$ as the direct sums of the bicomplexes $C_{\Gamma_\phi}(M^\phi_a)$ and $C(\Gamma_\phi,M^\phi_a)$, respectively. This enables us to define the equivariant cohomology $H^\bt_{\Gamma_\phi}(M^\phi)$ and mixed equivariant homology $H_\bt^{\Gamma_\phi}(M^\phi)^\natural$ of $M^\phi$. We observe that the 
mixed bicomplex $C(\Gamma_\phi,M^\phi)$ is the tensor product over $\Gamma_\phi$ of the mixed complexes $C^\flat(\Gamma_\phi)$ and $(\Omega(M^\phi), 0,d)$. Therefore, we obtain a mixed complex map $(\varepsilon \nu_\phi)\otimes \alpha^{\phi}: \Tot_\bt(\bC^\phi(\Gamma_\phi, \cA))\rightarrow \Tot_\bt(C(\Gamma_\phi,M^\phi))$, which is shown to be a quasi-isomorphism~\cite{Po:CRAS5}. Combining this with the quasi-isomorphisms~(\ref{eq:quasi-isomorphism-CphiGA-CAGphi}) we then obtain the following result. 

\begin{theorem}[\cite{Po:CRAS5}] \label{thm:HCcross-finite-order}
Let $\phi\in \Gamma$ have finite order. 
 \begin{enumerate}
\item  The following chain maps are quasi-isomorphisms,
\begin{equation}
 \Tot_\bt\left( C(\Gamma_\phi,M^\phi)\right)^\natural  \xleftarrow[]{(\varepsilon \nu_\phi)\otimes \alpha^\phi}
 \Tot_\bt\left(\bC^\phi(\Gamma_\phi,\cA)\right)^\natural 
 \xrightarrow{\mu_\phi\circ\shuffle^\natural} \bC_\bt(\cA_\Gamma)_{[\phi]}^\natural.
 \label{eq:manifolds.quasi-isom-finite-order}
\end{equation}

\item There are similar quasi-isomorphisms between the respective periodic cyclic complexes. 

\item This gives rise to isomorphisms, 
\begin{equation*}
\bHC_\bt(\cA_\Gamma)_{[\phi]}\simeq H^{\Gamma_\phi}_\bt(M^\phi)^\natural,   \qquad \bHP_\bt(\cA_\Gamma)_{[\phi]}\simeq H^{\Gamma_\phi}_{\ev/\odd}(M^\phi)^\sharp
\end{equation*}
\end{enumerate}
\end{theorem}

\begin{remark}
Brylinski-Nistor~\cite{BN:KT94} expressed  $\bHC_\bt(\cA_\Gamma)_{[\phi]}$ and $\bHP_\bt(\cA_\Gamma)_{[\phi]}$ in terms of the equivariant homology of $M^\phi$. We obtain explicit quasi-isomorphisms with the equivariant homology complex by combining the quasi-isomorphisms~(\ref{eq:manifolds.quasi-isom-finite-order}) with the Poincar\'e duality for the de Rham complex $\Omega(M^\phi)$. In particular, this enables us to recover the results of~\cite{BN:KT94}.
\end{remark}

\begin{remark}
 When $\phi=1$ the cyclic space embedding $\mu_{1}$ actually is an isomorphism and has an explicit inverse $\mu_1^{-1}: \bC_\bt(\cA_\Gamma)_{[1]} \rightarrow
 \Diag_\bt(\bC^\phi(\Gamma_\phi,\cA))$. Therefore, in this case we obtain an explicit quasi-isomorphism, 
 \begin{equation*}
\mu_1^{-1} \circ  \AW^\natural \circ (\varepsilon\otimes \alpha): \bC_\bt(\cA_\Gamma)_{[1]} \longrightarrow \Tot_\bt\left( C(\Gamma,M)\right)^\natural.
\end{equation*}
There is a similar quasi-isomorphism at the periodic cyclic level. By duality this gives rise to a map from the $\Gamma$-equivariant cohomology of $M$ into the cyclic cohomology of $\cA_\Gamma$. This provides us with an alternative to the cochain map of Connes~\cite{Co:Kyoto83, Co:NCG}. 
 \end{remark}
 
\begin{remark}
 When $\Gamma_\phi$ is finite, we have an explicit quasi-isomorphism of mixed complexes from  $\Tot(C(\Gamma_\phi,M^\phi))$ to the $\Gamma_\phi$-invariant de Rham mixed complex $(\Omega^\bt(M^\phi)^{\Gamma_\phi},0,d)$. This allows us to express the cyclic homology and periodic cyclic homology of $\bC(\cA_\Gamma)_{[\phi]}$ in terms of the invariant de Rham cohomology $H^\bt(M^\phi)^{\Gamma_\phi}$. When $\Gamma$ is finite, by using the 
 splitting~(\ref{eq:HCrossed.splitting-conjugacy-cyclic-space}) 
 we obtain an explicit quasi-isomorphisms of mixed complexes between $\bC(\cA_\Gamma)$ and the orbifold de Rham mixed complex
  $(\Omega^\bt(M/\Gamma), 0,d)$, where $\Omega^\bt(M/\Gamma) := \bigoplus_a  \Omega^\bt(M^\phi)^{\Gamma_\phi}$. This allows us to recover the identification by  Baum-Connes~\cite{BC:CCDG} of $\bHP_\bt(\cA_\Gamma)$ with the orbifold cohomology $H^{\ev/\odd}(M/\Gamma)$.  
\end{remark}

Let $\eta^\phi: H^{\Gamma_\phi}_{\ev/\odd}(M^\phi)^\sharp\rightarrow \bHP_\bt(\cA_\Gamma)_{[\phi]}$ be the isomorphism given by Theorem~\ref{thm:HCcross-finite-order}. Composing it with the cap product~(\ref{eq:equivariant.cap-product-mixed}) arrives at the following statement.

\begin{corollary}[\cite{Po:CRAS5}]\label{cor:finite-order.cap-HG-HP}
 Let $\phi \in \Gamma$ have finite order. Then we have a graded bilinear graded map,
 \begin{equation*}
 \eta^\phi(- \frown-) : H^{\ev/\odd}_{\Gamma_\phi}(M^\phi) \times H_{\ev/\odd}(\Gamma_\phi,\C) \longrightarrow \bHP_\bt(\cA_\Gamma)_{[\phi]}. 
\end{equation*}
 In particular, equivariant characteristic classes naturally give rise to classes in $\bHP_\bt(\cA_\Gamma)_{[\phi]}$. 
\end{corollary}

The definition of the isomorphism $\eta^\phi$ involve a cylindrical version of the shuffle map. Although explicit, this map involves numerous terms. As it turns out, we actually obtain a very simple and concise formula when we pair $\eta^\phi$ with cochains arising from equivariant currents as follows.  

Let $\Omega^\Gamma(M)=(\Omega^\Gamma_\bt(M), d)$ be the cochain complex of equivariant currents, where $\Omega^\Gamma_m(M)$, $m\geq 0$, consists of maps $C:\Gamma\rightarrow \Omega_m(M)$ that are $\Gamma$-equivariant in the sense that
\begin{equation*}
C(\psi^{-1} \phi_0 \psi)=\psi_*[C(\phi_0)] \qquad \text{for all  $\phi_0,\psi \in \Gamma$}.
\end{equation*}
Any equivariant current $C\in \Omega^\Gamma_m(M)$ defines a cochain $\varphi_C\in C^m(\cA_\Gamma)$ given by
\begin{equation}
\varphi_C(f^0u_{\psi_0}, \ldots, f^mu_{\psi_m})= \frac{1}{m!} \acou{C(\psi)}{f^0 d\hat{f}^1 \wedge \cdots \wedge d\hat{f}^m},\quad f^j\in \cA, \ \psi_j\in \Gamma,
\label{eq:HCrossed.vphiC}
\end{equation}
where we have set $\psi=\psi_0 \cdots \psi_m$ and $\hat{f}^j= f^j\circ (\psi_{0} \cdots\psi_{j-1})^{-1}$. 
This provides us with a map of mixed complexes  from 
$(\Omega_\bt^\Gamma(M), d,0)$ to $(C^\bt(\cA_\Gamma), B, b)$. Therefore, we obtain cochain maps between their respective cyclic and periodic cochain  complexes. Note that the periodic cyclic complex of $\Omega^\Gamma(M)$ is just $(\Omega^\Gamma_{\ev/\odd}(M), d)$, with $\Omega^\Gamma_{\ev/\odd}(M)= 
\bigoplus_{\text{$m$ even/odd}}\Omega_m^\Gamma(M)$. 

In what follows, given any equivariant chain $\omega \in C^{\ev/\odd}(\Gamma_\phi,M^\phi)$, we denote by  $\omega_0$ its component in $C_{0,\bt}(\Gamma_\phi, M^\phi)\simeq \Omega^\bt(M^\phi)$. In addition, when $\omega$ is a cycle we shall denote by $[\omega]$ its class in $H^{\ev/\odd}_{\Gamma_\phi}(M^\phi)^\sharp$. 

\begin{proposition}[\cite{Po:CRAS5}]\label{prop:manifolds.pairing-equivarian-currents}
Let $\phi \in \Gamma$ have finite order. Then, for any closed equivariant current $C\in \Omega_{\ev/\odd}^\Gamma(M)$ and any mixed equivariant cycle $\omega \in C_{\ev/\odd}(\Gamma_\phi, M^\phi)$, we have \[\acou{\varphi_C}{\eta^\phi([\omega])}= \acou{C(\phi)}{\tilde{\omega}_0},\] where 
$\tilde{\omega}_0\in \Omega^\ev(M)$ is such that $\iota_{M^\phi}^*\tilde{\omega}_0=\omega_0$. 
\end{proposition}

\subsection{The Cyclic space $\bC(\cA_\Gamma)_{[\phi]}$ when $\phi$ has infinite order}\label{subsec:HCrossed.infinite-order}
Let $\phi \in \Gamma$ have infinite order. Set $\overline{\Gamma}_\phi=\Gamma_\phi / \brak\phi$, where $\brak\phi$ is the subgroup generated by $\phi$. 
The canonical projection $\wpi:\Gamma_{\phi} \rightarrow \overline{\Gamma}_{\phi}$ gives rise to a $(\Gamma_{\phi}, \overline{\Gamma}_{\phi})$-equivariant paracyclic $\Gamma_{\phi}$-space map $\wpi: C^\phi_\bt(\Gamma_{\phi})\rightarrow C_\bt(\overline{\Gamma}_{\phi})$. 
Composing this map with the projection $\pi^\natural: C_\bt(\overline{\Gamma}_{\phi})^\natural\rightarrow C_\bt(\overline{\Gamma}_{\phi})$ and the antisymmetrization map $\varepsilon:C_\bt(\overline{\Gamma}_{\phi}) \rightarrow C_\bt(\overline{\Gamma}_{\phi})$ given by~(\ref{eq:HCrossed.antisymmetrization}), we then obtain a $(\Gamma_{\phi}, \overline{\Gamma}_{\phi})$-equivariant chain map $\wepsn: C^\phi_\bt(\Gamma_{\phi})^\natural \rightarrow C_\bt(\overline{\Gamma}_{\phi})$.  In addition, let $u_\phi \in C^2(\overline{\Gamma}_\phi,\Z)$ be a $2$-cocycle representing the Euler class $e_\phi \in H^2(\overline{\Gamma}_\phi,\Z)$ of the Abelian extension $1\rightarrow \brak\phi \rightarrow \overline{\Gamma}_\phi \rightarrow \Gamma_\phi \rightarrow 1$ (\emph{cf}.\ \cite{Br:CG, Ro:IHA}).  The cap product with $u_\phi$ then gives rise to a chain map $u_\phi \frown -: C_{\bt}(\overline{\Gamma}_\phi)\rightarrow C_{\bt-2}(\overline{\Gamma}_\phi)$. Then there is an explicit  chain homotopy $h_\phi: C_\bt^\phi(\Gamma_\phi)^\natural \rightarrow C_{\bt -1}(\overline{\Gamma}_\phi)$ such that $\wepsn S- (u_\phi \frown -) \wepsn = \partial h_\phi + h_\phi(\partial +B_\phi S)$, where $B_\phi$ is the $B$-differential of $C^\phi(\Gamma_\phi)$  (see \cite{Ji:KT95, Po:CRAS4}). 

Assume further that $\phi$ acts cleanly on $M$. As $\phi$ acts trivially on the fixed-point set $M^\phi$, the action of $\Gamma_\phi$ on $\Omega^\bt(M^\phi)$ descends to an action of $\OG_\phi$. Moreover, as $u_\phi \frown -$ is a chain map of degree~$-2$, we get an 
$S$-module $C^\sigma(\OG_\phi)=(C_\bt(\OG_\phi), \partial, u_\phi\frown -)$. The triangular $S$-module $C^\sigma(\OG_\phi,\Omega(M^\phi))$ is obtained as the tensor product over $\OG_\phi$ of $C^\sigma(\OG_\phi)$ with the de Rham mixed complex $(\Omega^\bt(M^\phi),0,d)$. Namely, $C^\sigma(\OG_\phi,\Omega(M^\phi))= (C^\sigma_{\bt,\bt}(\OG_\phi,\Omega(M^\phi)), \partial, 0,d, u_\phi \frown -)$, where $C^\sigma_{p,q}(\OG_\phi,\Omega(M^\phi))= C_p(\OG_\phi) \otimes_{\OG_\phi} \Omega^q(M^\phi)$.  Its total $S$-module is $(\Tot_\bt(C^\sigma(\OG_\phi, \Omega(M^\phi))), d^\dagger, u_\phi \frown -)$, where $\Tot_m(C^\sigma(\OG_\phi, \Omega(M^\phi)))= \bigoplus_{p+q=m} C_p(\OG_\phi) \otimes_{\OG_\phi} \Omega^q(M^\phi)$ and $d^\dagger = \partial + (-1)^p d(u_\phi \frown -)$ on $C_p(\OG_\phi) \otimes_{\OG_\phi} \Omega^q(M^\phi)$. 

We observe that 
\begin{align*}
 \Tot_m\left( \bC^\phi(\Gamma_\phi,\cA)\right)^\natural & =  \Tot_m\left( \bC^\phi(\Gamma_\phi,\cA)\right) \oplus  \Tot_{m-2}\left( \bC^\phi(\Gamma_\phi,\cA)\right)^\natural \cdots \\
 & = \bigoplus_{m-(p+q)\in 2\N_0} C_p(\Gamma_\phi) \otimes_{\Gamma_\phi} \bC_q(\cA) \\
 & = \bigoplus_{p+q=m} \left(C_p(\Gamma_\phi)\oplus C_{p-2}(\Gamma_\phi) \oplus \cdots\right) \otimes_{\Gamma_\phi} \bC_q(\cA)\\
 & = \bigoplus_{p+q=m} C^\phi_p(\Gamma_\phi)^\natural  \otimes_{\Gamma_\phi} \bC_q(\cA). 
\end{align*}
Using this we get a chain map $\theta(1\otimes \alpha^\phi):\Tot_\bt(\bC^\phi(\Gamma_\phi,\cA))^\natural \rightarrow 
\Tot_\bt(C^\sigma(\overline{\Gamma}_\phi,\Omega(M^\phi)))$,  where $\alpha^\phi$ is given by~(\ref{eq:HKR-map}) and $\theta: C^\phi_\bt(\Gamma_\phi)\otimes \Omega^\bt(M^\phi)\rightarrow C_\bt(\OG_\phi)\otimes_{\OG_\phi} \Omega^\bt(M^\phi)$ is  given by
\begin{equation*}
\theta=\wepsn \otimes 1 + (-1)^{p-1} (h_\phi \otimes d)\quad  \text{on}\  C^\phi_p(\Gamma_\phi)\otimes_{\OG_\phi} \Omega^q(M^\phi)).
\end{equation*}
We then have the following result.

\begin{theorem}[\cite{Po:CRAS5}]\label{thm:manifolds.infinite-order}
 Let $\phi \in \Gamma$ have infinite order and act cleanly on $M$.  
 \begin{enumerate}
\item The following chain maps are quasi-isomorphisms,
  \begin{equation}
 \Tot_\bt (C^\sigma(\overline{\Gamma}_\phi ,\Omega(M^\phi)))  \xleftarrow{\theta(1\otimes \alpha)}
 \Tot_\bt\left(\bC^\phi(\Gamma_\phi,\cA)\right)^\natural 
 \xrightarrow{\mu_\phi \circ\shuffle^\natural } C_\bt(\cA_\Gamma)_{[\phi]}^\natural.
 \label{eq:manifolds.infinite-quasi-isom}
\end{equation} 

\item This gives rise to an isomorphism, 
\begin{equation}
 \bHC_\bt(\cA_{\Gamma})_{[\phi]}\simeq H_\bt\left( \Tot(C^\sigma(\overline{\Gamma}_\phi ,\Omega(M^\phi)))\right).
 \label{eq:HC.infinite-order-isomorphism}
\end{equation}

\item Under this isomorphism, the periodicity operator of $\bHC_\bt(\cA_{\Gamma})_{[\phi]}$ is given by the cap product, 
\begin{equation*}
e_\phi \frown - : H_\bt\left(\Tot(C^\sigma(\overline{\Gamma}_\phi ,\Omega(M^\phi)))\right) \longrightarrow H_{\bt-2}\left(\Tot(C^\sigma(\overline{\Gamma}_\phi ,\Omega(M^\phi)))\right),
\end{equation*}
 where $e_\phi \in H^2(\OG_\phi, \Z)$ is the Euler class of the extension $1\rightarrow \brak\phi \rightarrow \overline{\Gamma}_\phi \rightarrow \Gamma_\phi \rightarrow 1$. 
In particular, $\bHP_\bt(\cA_\Gamma)_{[\phi]}=0$ whenever $e_\phi$ is nilpotent in $H^\bt(\OG_\phi, \C)$.  
\end{enumerate}
\end{theorem}

\begin{remark}
 The quasi-isomorphisms~(\ref{eq:manifolds.infinite-quasi-isom}) and the filtration by columns of $ \Tot_\bt (C^\sigma(\overline{\Gamma}_\phi ,\Omega(M^\phi)))$ give rise to a spectral sequence,  
 \begin{equation*}
E^2_{p,q}= H_p(\OG_\phi, \Omega^q(M^\phi)) \Longrightarrow \bHC_{p+q}(\cA_\Gamma)_{[\phi]},
\end{equation*}
  where the $d^2$-differential is  $(-1)^pd(u_\phi\frown -): H_p(\OG_\phi, \Omega^q(M^\phi))\rightarrow H_{p-2}(\OG_\phi, \Omega^{q+1}(M^\phi))$. Crainic~\cite{Cr:KT99} obtained such a spectral sequence. He inferred from this that  $\bHC_m(\cA_\Gamma)_{[\phi]} \simeq \oplus_{p+q=m} H_p(\OG_\phi, \Omega^q(M^\phi))$ (see~\cite[Corollary~4.15]{Cr:KT99}). What we really have is the 
 isomorphism~(\ref{eq:HC.infinite-order-isomorphism}).
 \end{remark}

\begin{remark}
 When $\phi \in \Gamma$ has infinite order and does not act cleanly on $M$, we still have a spectral sequence 
  \begin{equation*}
E^2_{p,q}= H_p(\OG_\phi,H_q^\phi(\cA)) \Longrightarrow \bHC_{p+q}(\cA_\Gamma)_{[\phi]},
\end{equation*}
where $H^\phi_\bt(\cA)$ is the homology of the twisted Hochschild complex $(\bC^\phi_\bt(\cA), b_\phi)$ (see~\cite{Cr:KT99, Po:CRAS5}). In addition, it can be shown that $\bHC_{\bt}(\cA_\Gamma)_{[\phi]}$ is a module over the cohomology ring $H^\bt(\OG_\phi,\C)$ and its periodicity operator is given by the action of the Euler class $e_\phi$ (see~\cite{Cr:KT99, Ni:InvM90, Po:CRAS5}). In particular, $\bHP_\bt(\cA_\Gamma)_{[\phi]}=0$ whenever $e_\phi$ is nilpotent in $H^\bt(\OG_\phi, \C)$. In~\cite{Po:CRAS5} this action arises from the construction in~\cite{Po:CRAS4} of an explicit coproduct at the level of chains.   
\end{remark}

\begin{remark}
The nilpotence of the Euler class $e_\phi$  in $H^\bt(\Gamma, \Q)$ is closely related to the Bass and idempotent conjectures (see, e.g., \cite{Em:InvM98,Ji:KT95}). In particular, $e_\phi$ is rationally nilpotent for every infinite order element $\phi \in \Gamma$ whenever $\Gamma$ belongs to one of the following classes of groups: free products of Abelian groups, hyperbolic groups, and arithmetic groups. 
\end{remark}

\section{Conformal Invariants}\label{sec:Conformal-Invariants}
In this section, we shall use the results of the previous section to compute the conformal invariants produced by Theorem~\ref{thm:CC.conf.inv}. 
This will provide us with a new family of geometric conformal invariants associated with equivariant classes and conformal diffeomorphisms. 

Throughout this section we let $M^{n}$ be an even dimensional spin oriented compact manifold equipped with a non-flat 
conformal structure $\sC$. In addition, we let $G$ be the (identity component) of the group of 
orientation-preserving diffeomorphisms of $M$ preserving the conformal structure $\sC$ and the spin structure of $M$. 

\subsection{Conformal invariants from mixed equivariant cycles} Let $\phi \in G$ have finite order. We denote by $G_\phi$ its centralizer in $G$. As in Section~\ref{sec:cyclic-homology-crossed-product}, 
given any mixed equivariant chain $\omega \in C_{\ev/\odd}(G_\phi,M^\phi)$ we denote by $\omega_0$ its component in 
$C_0(G_\phi,\C)\otimes_{G_\phi} \Omega^{\ev/\odd}(M^\phi)\simeq \Omega^{\ev/\odd}(M^\phi)$. 

By Theorem~\ref{thm:HCcross-finite-order} we have an isomorphism $\eta^\phi: H^{G_\phi}_{\ev/\odd}(M^\phi)^\sharp \rightarrow \bHP_\bt(\cA_G)_{[\phi]}$. 
As  $\bHP_\bt(\cA_G)_{[\phi]}$ is a direct summand of $\bHP_\bt(\cA_G)$ we obtain a linear embedding 
$\eta^\phi:  H^{G_\phi}_{\ev/\odd}(M^\phi)^{\sharp} \hookrightarrow  \bHP_\bt(\cA_G)$. Given any metric $g \in \sC$ and 
any even mixed equivariant cycle $\omega \in C_{\ev}^{G_\phi}(G_\phi, M^\phi)$, we set 
 \begin{equation*}
  I_{g}^{\phi}(\omega):=\acou{\bCh(\sD_{g})_{\sigma_{g}}}{\eta^\phi[\omega]},
\end{equation*}where $[\omega]$ is the class of $\omega$ in $H_{\ev/\odd}^{G_\phi}(M^\phi)^\sharp$, and $\bCh(\sD_{g})_{\sigma_{g}}\in \bHP^{0}(\cA_{G})$ is the Connes-Chern character of the conformal twisted spectral triple 
$(C^{\infty}(M),L^{2}_{g}(M,\sS),\sD_{g})_{\sigma_{g}}$ associated with $g$. 

\begin{theorem}\label{thm:Confinv.main}
    Let $\phi \in G$ have finite order. In addition, let $\omega \in C_{\ev}^{G_\phi}(G_\phi, M^\phi)$ be a mixed equivariant cycle. 
    \begin{enumerate}
        \item  The scalar $I_{g}^{\phi}(\omega)$ is an invariant of the conformal class $\sC$. Moreover, it depends only on the class of 
        $\omega$ in the even mixed equivariant homology $H^{G_{\phi}}_{\ev}(M^{\phi})^{\sharp}$. 
    
        \item  For any $G$-invariant metric $g\in \sC$, we have 
        \begin{equation}
            I_{g}^{\phi}(\omega)=  \int_{M^{\phi}}\Upsilon^{\phi}(R) \wedge \omega_{0},
            \label{eq:Confinv.Igphi-omega}
        \end{equation}where $\Upsilon^{\phi}(R) $ is defined by~(\ref{eq:Conformal.Upsilon-phi}) and    the integral has the same meaning as 
        in~(\ref{eq:Conformal.integration-Mphi}).
    \end{enumerate}
\end{theorem}
\begin{proof}
    By Theorem~\ref{thm:CC.conf.inv} the Connes-Chern character $\bCh(\sD_{g})_{\sigma_{g}}\in \bHP^{0}(\cA_{G})$ is an invariant of the 
    conformal class $\sC$. Therefore, $I_{g}^{\phi}(\omega)=\acou{\bCh(\sD_{g})_{\sigma_{g}}}{\eta^\phi[\omega]}$ 
    depends only on the conformal class $\sC$ and the class of $\omega$ in $H^{G_{\phi}}_{\ev}(M^{\phi})^{\sharp}$. 
    This proves the first part. 
     
    As for the second part, let $g$ be a $G$-invariant metric in $\sC$. By Theorem~\ref{thm:LIF-Conformal-Geometry} the Connes-Chern 
   character $\bCh(\sD_{g})_{\sigma_{g}} \in \bHP^0(\cA_G)$  is represented by the CM 
    cocycle $\varphi^{\CM}\in \bC^0(\cA_G)^\sharp$ associated with $g$ by~(\ref{eq:Conformal.CM-cocycle}). 
    Thus,
    \begin{equation}
        I_{g}^{\phi}(\omega)=\acou{[\varphi^{\CM}]}{\eta^\phi[\omega]}. 
        \label{eq:Confinv.Ig-omega-vphiCM}
    \end{equation}

Recall that, given any ${\phi_0} \in G$, the differential form $\Upsilon^{{\phi_0}}(R) \in \Omega^\ev(M^{\phi_0})$ is defined by
\begin{equation*}
  \restr{\Upsilon^{{\phi_0}}(R)}{M^{\phi_0}_{a}}=(-i)^{\frac{n}{2}}(2\pi)^{-\frac{a}{2}}\hat{A}\left(R^{TM^{{\phi_0}}_{a}}\right)\wedge 
 \cV_{{\phi_0}}\left(R^{\cN^{{\phi_0}}_{a}}\right), \qquad a=0,2,\ldots,n,
\end{equation*}
where the differential forms $\hat{A}(R^{TM^{{\phi_0}}_{a}})$ and  $\cV_{{\phi_0}}(R^{\cN^{{\phi_0}}_{a}})$ are given by~(\ref{eq:Conformal.characteristic-forms}). Note that 
$\hat{A}(R^{TM^{{\phi_0}}_{a}})$ and  $\cV_{{\phi_0}}(R^{\cN^{{\phi_0}}_{a}})$ are characteristic differential forms for the vector 
bundles $TM^{{\phi_0}}_{a}$ and $\cN_{a}^{{\phi_0}}$, respectively.  In particular, these differential forms are closed, and hence, $\Upsilon^{{\phi_0}}(R)$ is closed differential form on $M^{\phi_0}$. 

Let $\psi\in G$ and set $\phi_1=\psi^{-1} \phi_0 \psi$. For $a=0,2,\ldots,n$, the diffeomorphism $\psi$ induces an isometric diffeomorphism from $M^{\phi_0}_a$ onto $M^{\phi_1}_a$, and,  hence,  $\psi_*\hat{A}(R^{TM_a^{\phi_0}})= \hat{A}(R^{TM_a^{\phi_1}})$. As the action of $\psi$ on $TM$ is isometric, it also induces a vector bundle isomorphism $\psi_*: \cN^{\phi_0}_a \rightarrow \cN^{\phi_1}_a$ such that $\psi_*\nabla^{\cN^{\phi_0}_a}=\nabla^{\cN^{\phi_1}_a}$ and $(\psi_*)^{-1} \phi_0^{\cN} \psi_* =\phi_1^{\cN}$ on $\cN_a^{\phi_1}$. Thus, 
\begin{equation*}
\psi_*\cV_{{\phi_0}}(R^{\cN^{{\phi_0}}_{a}}) = \psi_* 
{\det}^{-\frac{1}{2}}\left(1- \phi_0^{\cN}e^{-R^{\cN^{\phi_0}_{a}}}\right)=  
{\det}^{-\frac{1}{2}}\left(1- \phi_1^{\cN}e^{-R^{\cN^{\phi_1}_{a}}}\right)= \cV_{{\phi_1}}(R^{\cN^{{\phi_1}}_{a}}). 
\end{equation*}
It then follows that 
\begin{equation}
\psi_* \Upsilon^{{\phi_0}}(R) = \Upsilon^{{\psi^{-1}\phi_0 \psi}}(R) \qquad \forall \psi \in G. 
\label{eq:Confinv.equivariance-Upsilon}
\end{equation}

Let $\Omega^G(M)=(\Omega^G_\bt(M),d)$ be the $G$-equivariant de Rham complex of $M$ as defined in Section~\ref{sec:cyclic-homology-crossed-product}. Let $C:G\rightarrow \Omega_\ev(M)$ be the map defined by 
\begin{equation*}
 \acou{C(\phi_0)}{\xi} = \int_{M^{\phi_0}} \Upsilon^{{\phi_0}}(R) \wedge \iota_{M^{\phi_0}}^* \xi, \qquad \xi\in \Omega^\ev(M), \ {\phi_0} \in G. 
\end{equation*}
For each ${\phi_0} \in G$, this defines a current $C({\phi_0})\in \Omega_\ev(M)$ which is the image of the Poincar\'e dual of $\Upsilon^{{\phi_0}}(R)$ under the canonical embedding $(\iota_{M^{\phi_0}})_*: \Omega_\bt(M^{\phi_0}) \rightarrow \Omega_\bt(M)$. In particular, we see that $C({\phi_0})$ is a closed de Rham current. 
In addition, let $\psi\in G$ and $\phi_1= \psi^{-1} \psi_0 \psi$. Then the $G$-equivariance~(\ref{eq:Confinv.equivariance-Upsilon}) of $C$ implies that  
\begin{equation*}
 \acou{C({\phi_0})}{\psi_*\xi}= \int_{M^{\phi_0}} \psi^*\left( \Upsilon^{{\phi_0}}(R) \wedge \iota_{M^{\phi_0}}^* \xi\right) = \int_{M^{\phi_1}} 
  \Upsilon^{{\phi_1}}(R) \wedge \iota_{M^{\phi_1}}^* \xi =  \acou{C({\phi_1})}{\xi}.  
\end{equation*}
This shows that $C:G\rightarrow \Omega_\ev(M)$ is a closed equivariant current in $\Omega^G_\ev(M)$. 

We observe that it follows from~(\ref{eq:HCrossed.vphiC}) and~(\ref{eq:Conformal.CM-cocycle}) that $\varphi^\CM=\varphi_C$. Combining this with Proposition~\ref{prop:manifolds.pairing-equivarian-currents} and~(\ref{eq:Confinv.Ig-omega-vphiCM}) we then get
\begin{equation*}
 I_g^\phi(\omega)=\acou{\varphi_C}{\eta^\phi[\omega]}=\acou{C(\phi)}{\tilde{\omega}_0}, 
\end{equation*}
where $\tilde{\omega}_0 \in \Omega^\ev(M)$ is such that $\iota_{M^\phi}^*\tilde{\omega}_0=\omega_0$. In view of the definition of $C(\phi)$ we obtain
\begin{equation*}
 I_g^\phi(\omega)= \int_{M^\phi}  \Upsilon^{{\phi}}(R) \wedge \iota_{M^\phi}^*\tilde{\omega}_0=  \int_{M^\phi}  \Upsilon^{{\phi}}(R) \wedge \omega_0.
\end{equation*}
This proves~(\ref{eq:Confinv.Igphi-omega}) and completes the proof. 
\end{proof}

Combining Theorem~\ref{thm:Confinv.main} with Corollary~\ref{cor:finite-order.cap-HG-HP} we immediately get the following corollary. 

\begin{corollary}
Let $\phi \in G$ have finite order. In addition, let $\omega$ be a closed equivariant cocycle in $C^\ev_{G_\phi}(M^\phi)$ (resp., $C^\odd_{G_\phi}(M^\phi)$) and $\gamma$ a group cycle in $C_p(G_\phi,\C)$ with $p$ even (resp., odd). 
\begin{enumerate}
        \item  The scalar $I_{g}^{\phi}(\omega \frown \gamma)$ is an invariant of the conformal class $\sC$ which depends only on the class of 
        $\omega$ in the equivariant cohomology $H_{G_{\phi}}^{\ev/\odd}(M^{\phi})$ and on the class of $\gamma$ in the group homology $H_p(G_\phi, \C)$. 
    
        \item  For any $G$-invariant metric $g\in \sC$, we have 
        \begin{equation}
            I_{g}^{\phi}(\omega \frown \gamma)=  \int_{M^{\phi}}\Upsilon^{\phi}(R) \wedge (\omega \frown \gamma)_{0},
            \label{eq:Confinv.Igphi-omega-gamma}
        \end{equation}
\end{enumerate}
\end{corollary}

\subsection{Conformal invariants and equivariant characteristic classes}
Given $a\in \{0,2,\ldots, n\}$, let $E$ be a $G_\phi$-equivariant vector bundle over $M^\phi_a$ equipped with a connection $\nabla^E$. Let $\Ch_{G_\phi}(\nabla^E) \in C^\ev_{G_\phi}(M^\phi_a)$ be the equivariant Chern form associated with $\nabla^E$ (\emph{cf}.~Section~\ref{sec:group}). As mentioned in Section~\ref{sec:group}, it represents the equivariant Chern character of $E$ in $H^\ev_{G_\phi}(M^\phi_a)$. By construction 
$\Ch_{G_\phi}(\nabla^E)=(\Ch_{G_\phi}^p(\nabla^E))_{p\geq 0}$, where $\Ch_{G_\phi}^p(\nabla^E)\in C^p(G_\phi, \Omega^\bt(M^\phi))$ is given by~(\ref{eq:equivariant.Chern-form0}) for $p=0$ and by~(\ref{eq:equivariant.Chern-form+}) for $p\geq 1$. Thus, 
\begin{equation*}
\left(\Ch_{G_\phi}(\nabla^E) \frown 1\right)_0= \Ch_{G_\phi}(\nabla^E)(1)= \Ch(\nabla^E). 
\end{equation*}
Combining this~(\ref{eq:Confinv.Igphi-omega-gamma}) we then arrive at the following statement. 

\begin{proposition}
Let $\phi \in G$ have finite order. In addition, let $E$ be a $G_\phi$-equivariant vector bundle over $M^\phi_a$ for some $a\in \{0,2,\ldots, n\}$. Then, for every 
$G$-invariant metric $g\in \sC$ and every connection $\nabla^E$ on $E$, we have 
\begin{equation}
 I_g^\phi\left(\Ch_{G_\phi}(\nabla^E)\frown 1\right) =  \int_{M^{\phi}_a}\Upsilon^{\phi}(R) \wedge \Ch(\nabla^E). 
 \label{eq:Confinv.Ig-ChGEcap1-Mphia}
\end{equation}
\end{proposition}

The right-hand side of~(\ref{eq:Confinv.Ig-ChGEcap1-Mphia}) has a natural interpretation as a generalized Lefschtez number as follows. Let $E$ be a $G$-equivariant vector bundle over $M$ equipped with  a connection $\nabla^E$. For $a=0,2,\ldots, n$, we denote by $E^\phi_a$ the restriction of $E$ to $M^\phi_a$. This is a $G_\phi$-equivariant vector bundle over $M_a^\phi$. The connection $\nabla^E$ induces a $G_\phi$-invariant connection $\nabla^{E^\phi_a}$ on each vector bundle $E^\phi_a$. 
We also set $E^\phi=\bigsqcup_a E_a^\phi$. This is a $G_\phi$-equivariant (topological) vector bundle over $M^\phi$. We equip $E^\phi$ with the connection $\nabla^E=\bigoplus_a \nabla^{E^\phi_a}$. Recall that $C^\ev_{G_\phi} (M^\phi)= \bigoplus_a C^\ev_{G_\phi}(M^\phi_a)$ and  $H^\ev_{G_\phi}(M^\phi) = \bigoplus_a H^\ev_{G_\phi}(M^\phi_a)$. We then define the $G_\phi$-equivariant Chern character of $E^\phi$ by
\begin{equation*}
\Ch_{G_\phi}(E^\phi):= \sum_a \Ch_{G_\phi}(E^\phi)\in H^\ev_{G_\phi}(M^\phi). 
\end{equation*}
It is represented by the equivariant Chern form,
\begin{equation*}
\Ch_{G_\phi}(\nabla^{E^\phi}):= \sum_a \Ch_{G_\phi}(\nabla^{E_a^\phi}) \in C^\ev_{G_\phi} (M^\phi). 
\end{equation*}
We have $I_g(\Ch_{G_\phi}(\nabla^{E^\phi})\frown 1)= \sum_a I_g(\Ch_{G_\phi}(\nabla^{E^\phi_a})\frown 1)$. Therefore, by using~(\ref{eq:Confinv.Ig-ChGEcap1-Mphia}) we get
\begin{align}
I_g \left( \Ch_{G_\phi}(\nabla^{E^\phi})\frown 1\right) & = \sum_a \int_{M_a^\phi} \Upsilon^{\phi}(R) \wedge \Ch(\nabla^{E^\phi_a}) \nonumber\\
&  = \int_{M^\phi} \Upsilon^{\phi}(R) \wedge \iota_{M^\phi}^*\Ch(\nabla^E).  
\label{eq:Confinv.Ig-ChG-Ephicap1}
\end{align}

Let us further assume that  $E$ is equipped with a $G$-invariant Hermitian metric and 
$\nabla^{E}$ is a $G$-invariant Hermitian connection on $E$. In particular, any diffeomorphism $\phi\in G$ lifts to a unitary vector bundle 
isomorphism $\phi^{E}:E\rightarrow \phi_{*}E$. Given any $G$-invariant metric $g\in \sC$, we then get a unitary 
representation $\phi \rightarrow U_{\phi}^{E}$ of $G$ into $L^{2}_{g}(M,\sS\otimes E)$ given by
\begin{equation*}
    (U_{\phi}^{E}\xi)(x)=(\phi^{\sS}\otimes \phi^{E})[\xi(\phi^{-1}(x))] \qquad \text{for all $\xi\in 
    L^{2}_{g}(M,\sS\otimes E)$}.
\end{equation*}Note also that $U_{\phi}$ preserves the $\Z_{2}$-grading,
\begin{equation}
   L^{2}_{g}(M,\sS\otimes E)=L^{2}_{g}(M,\sS^{+}\otimes E)\oplus L^{2}_{g}(M,\sS^{-}\otimes E). 
   \label{eq:Confinv.Z2-grading-spinors}
\end{equation}

Let $\sD_{\nabla^{E}}:C^{\infty}(M,\sS\otimes E)\rightarrow C^{\infty}(M,\sS\otimes E)$ be the Dirac operator 
associated with the Hermitian connection $\nabla^{E}$. Namely, 
\begin{equation*}
    \sD_{\nabla^{E}}=\sD_{g}\otimes 1_{E}+(c\otimes 1_{E})\circ \nabla^{E},
\end{equation*}where $\sD_{g}:C^{\infty}(M,\sS)\rightarrow C^{\infty}(M,\sS)$ is the Dirac operator of $M$ and 
$c:T^{*}M\times \sS\rightarrow \sS$ is the Clifford action of $T^{*}M$ on $\sS$. With respect to the splitting~(\ref{eq:Confinv.Z2-grading-spinors}) 
the operator $\sD_{\nabla^{E}}$ takes the form,
\begin{equation*}
    \sD_{\nabla^{E}}=
    \begin{pmatrix}
        0 & \sD_{\nabla^{E}}^{-} \\
        \sD_{\nabla^{E}}^{+} & 0
    \end{pmatrix}. 
\end{equation*}
As $\sD_{\nabla^{E}}$ is $G$-invariant, the nullspaces $\ker \sD^{\pm}_{\nabla^{E}}$ are preserved by the action of $G$ 
associated with the representation $\phi\rightarrow U_{\phi}^{E}$. The \emph{equivariant index} 
$\ind \sD_{\nabla^{E}}:G\rightarrow \R$ (a.k.a.~generalized Lefschetz number) is then defined by
\begin{equation*}
    \ind \sD_{\nabla^{E}}(\phi)= \Tr \left[U^{E}_{\phi|\ker \sD^{+}_{\nabla^{E}}}\right]- \Tr\left[U^{E}_{\phi|\ker \sD^{-}_{\nabla^{E}}}\right] \qquad 
    \forall \phi\in G. 
\end{equation*}

When $\phi=\op{id}_{M}$ we recover the Fredholm index of $\sD_{\nabla^{E}}$. When all the fixed-points 
of $\phi$ are isolated this agrees with the Lefschetz number of $\phi$ with respect to the Dirac complex with coefficients in $E$ in the sense of Atiyah-Bott~\cite{AB:AM67, AB:AM68}. The equivariant index theorem of Atiyah-Segal-Singer~\cite{AS:IEO2, AS:IEO3} 
produces a geometric formula for the equivariant index. Namely,
\begin{equation*}
    \ind \sD_{\nabla^{E}}(\phi)=   \int_{M^{\phi}}\Upsilon^{\phi}(R) \wedge \iota^{*}_{M^{\phi}}\Ch(\nabla^{E}).
\end{equation*} Combining this formula with~(\ref{eq:Confinv.Ig-ChG-Ephicap1}) we then arrive at the following result.  

\begin{theorem}\label{thm:Confinv.characteristic-classes}
    Let $\phi \in G$ have finite order. In addition, let $E$ be a $G$-equivariant Hermitian vector bundle over $M$.  Then, for every $G$-invariant metric  $g \in \sC$ and every $G$-invariant Hermitian connection $\nabla^E$ on $E$, we have
     \begin{align*}
       I_g \left( \Ch_{G_\phi}(\nabla^{E^\phi})\frown 1\right)  = & 
       \int_{M^{\phi}}\Upsilon^{\phi}(R) \wedge \iota^{*}_{M^{\phi}}\Ch(\nabla^{E})\\ 
       = &\ind \sD_{\nabla^{E}}(\phi).
    \end{align*}In particular, when the fixed-points 
of $\phi$ are isolated, the conformal invariant 
    $I_g ( \Ch_{G_\phi}(\nabla^{E^\phi})\frown 1)$ is given by the Lefschetz number of 
    $\phi$ with respect to the Dirac complex with coefficients in $E$. 
\end{theorem}

\subsection{Conformal indices} 
The conformal invariants $I^\phi_g(\omega)$ arising from Theorem~\ref{thm:Confinv.main} shed a new light on the conformal indices of  Branson-{\O}rsted~\cite{BO:CM86, BO:DGA91}. To put things into context, let $P_g:C^\infty(M,E)\rightarrow C^\infty(M,E)$ be a natural differential operator of (even) order $m \leq n+2$ with positive-definite leading symbol which is the power of some conformally covariant operator. Examples include the Yamabe and Paneitz operators, and more generally the conformal powers of the Laplacian of Graham-Jenne-Mason-Sparling~\cite{GJMS:JLMS92}.  Non-scalar examples include the square of the Dirac operator and the conformal powers of the Hodge Laplacian of Branson-Gover~\cite{BG:CPDE05}. Such operators then  admit a short-time heat-semigroup expansion, 
\begin{equation*}
\Tr\left[ e^{-t P_g}\right] \sim  \sum_{j\geq 0} t^{j-\hat{m}}\int_M b_j(P_g)(x)|dx| \qquad \text{as $t\rightarrow 0^+$}. 
\end{equation*}
Here $\hat{m}=n+2-m$, $|dx|$ is the Riemmanian density, and  $b_j(P_g)(x)$ is given by a universal expression in terms of the metric $g$ and its derivatives. Consider the constant term in the asymptotic expansion above, i.e.,  
\begin{equation*}
J(P_g):= \int_M b_{\hat{m}}(P_g)(x)|dx|.  
\end{equation*}
Branson-{\O}rsted~\cite{BO:CM86} (see also Parker-Rosenberg~\cite{PR:JDG87}) showed that $J(P_g)$ is an invariant of the conformal class of $g$. Branson-{\O}rsted called it the  \emph{conformal index} of $P_g$.   

More generally, let $\phi:M\rightarrow M$ be a smooth isometry which lifts to a unitary vector bundle isomorphism $\phi^E:E\rightarrow E$. Then, in the same way as in Section~\ref{sec:Conformal-CC-character}, this gives rise to a unitary operator $U_\phi:L^2(M,E)\rightarrow L^2(M,E)$. In addition, the fixed-point set of $\phi$ is a disjoint union $M^\phi= \bigsqcup M_a^\phi$, where $M_a^\phi$ is a submanifold of $M$ of dimension $a$. It can be shown (see, e.g, \cite{Gi:LNPAM, PW:JNCG16}) that we have a short-time asymptotic, 
\begin{equation*}
\Tr\left[ e^{-t P_g}U_\phi\right] \sim  \sum_{j\geq 0} \sum_{0\leq a \leq n} t^{j-\hat{m}(a)}\int_{M_a^\phi} b_j^{(a)}(P_g)(x)|dx| \qquad \text{as $t\rightarrow 0^+$}. 
\end{equation*}
Here $\hat{m}(a)=a+2-m$ and $b_j^{(a)}(P_g)(x)$ is given by a universal expression in terms of the metric $g$ and the diffeomorphism $\phi$ and their partial derivatives.  
For $a=0,\ldots, n$, define
\begin{equation*}
J_\phi^a(P_g):= \int_{M_a^\phi} b_{\hat{m}(a)}^{(a)}(P_g)(x)|dx|. 
\end{equation*}

Let us denote by $\sC$ the conformal class of $g$ and by $\sC_\phi$ the sub-class consisting of $\phi$-invariant metrics in $\sC$.  Thus, $\hat{g}\in \sC_\phi$ if and only if $\hat{g}=k^{-2}g$ with $k \in C^\infty(M)$, $k>0$ such that $k\circ \phi=k$. 

\begin{proposition}[{Branson-{\O}rsted~\cite{BO:DGA91}}] 
For $a=0,\ldots, n$, the scalar $J_\phi^a(P_g)$ is an invariant of the sub-class $\sC_\phi$.   
\end{proposition}
 
This result provides us with invariants of the sub-class $\sC_\phi$ only. As we shall now see, in the case of the square of the Dirac operator, we actually obtain invariants of the whole conformal class $\sC$ as follows. 

We shall work in the original setup of this section, where $M$ is a compact spin oriented Riemannian manifold of even dimension $n$ and $G$ is a group of diffeomorphisms preserving the spin structure and a given (non-flat) conformal structure $\sC$. Let $\omega=\omega_0+\omega_2+\cdots + \omega_n$ be an even differential form on $M$, where $\omega_{2q}\in \Omega^{2q}(M)$. For $t>0$ set 
\begin{equation*}
c_t(\omega):=c(\omega_0)+tc(\omega_2)+\cdots + t^nc(\omega_n), 
\end{equation*}
where $c:\Omega^\bt(M)\rightarrow C^\infty(M,\End (\sS))$ is the Clifford representation. The results of~\cite{PW:JNCG16} then imply that, for any $G$-invariant metric $g \in \sC$, we have
\begin{equation*}
J^\phi_g(\omega):= \lim_{t\rightarrow 0^+}\Str\left[c_t(\omega) e^{-\sD_g}U_\phi\right] = \int_{M^\phi} \Upsilon^\phi(R) \wedge \iota_{M^\phi}^* \omega.
\end{equation*}
Note that if we take $\omega=1$ on $M_a^\phi$ and $\omega=0$ on $M_{b}^\phi$ with $b\neq a$, then we obtain a ``super-symmetric'' version of the conformal index $J_\phi^a(\sD_g^2)$. In fact, an extension of the arguments of~\cite{BO:DGA91} shows that $J^\phi_g(\omega)$ also is an invariant of the subclass $\sC_\phi$. 

Bearing this in mind, Theorem~\ref{thm:Confinv.main} asserts that, given any mixed equivariant cycle $\omega \in C_{\ev}^{G_\phi}(G_\phi, M^\phi)$, for every $G$-invariant metric $g \in \sC$, we have
\begin{equation*}
 I_g^\phi(\omega)= \int_{M^\phi} \Upsilon^\phi(R) \wedge\omega_0= J^\phi_g(\tilde{\omega}_0), 
\end{equation*}
where $\tilde{\omega}_0$ is any differential form in $\Omega^\ev(M)$ such that $\iota_{M^\phi}^* \tilde{\omega}_0$ is the component of $\omega$ in $C_0(G_\phi) \otimes_{G_\phi} \Omega^\ev(M^\phi)\simeq \Omega^\ev(M^\phi)$. Therefore, we see that the (generalized) conformal indices $J^\phi_g(\omega)$ actually lead us to invariants of the \emph{full} conformal class $\sC$. 

In addition, let $\cE$ be a finitely generated product module over $\cA_\Gamma$ and $\nabla^\cE$ a connection on $\cE$. Assume further that $\omega$ is such that $\eta^\phi(\omega)=\Ch(\cE)$. Then by the very definition of $I_g^\phi(\omega)$ we have
\begin{equation*}
 J^\phi_g(\tilde{\omega}_0)=I_g^\phi(\omega)= \acou{\bCh(\sD_g)_{\sigma_g}}{\eta^\phi(\omega)}= \acou{\bCh(\sD_g)_{\sigma_g}}{\Ch(\cE)} = \ind \sD_{\nabla^\cE}. 
\end{equation*}
This shows that the generalized conformal index $ J^\phi_g(\tilde{\omega}_0)$ is a true Fredholm index. This confirms the intuition of Branson-{\O}rsted that their conformal indices could be interpreted as some sort of Fredholm indices. 

\subsection{Deser-Schwimmer conjecture} The conformal invariants arising from Theorem~\ref{thm:Confinv.main} are not the same kind of conformal invariants as those considered by Alexakis~\cite{Al:DGCI} in his long solution of the conjecture of Deser-Schwimmer~\cite{DS:GCCAAD}. This conjectures gives a description of \emph{all} the global conformal invariants. Here by a global conformal invariant it is meant an invariant of the form, 
\begin{equation}
\cI_g= \int_M P(g)(x) |dx|,
\label{eq:Confinv.global-confinv-DS}
\end{equation}
where $P(g)(x)$ is a universal polynomial in the metric $g$ and its derivatives in such a way that the above integral is an invariant of the conformal class of the metric $g$. The difference with the invariants from Theorem~\ref{thm:Confinv.main} lies in the fact that the latter takes into account the action of the conformal diffeomorphisms, i.e., the conformal gauge group. As mentioned in the introduction, this of some importance in the context of string theory and conformal gravity. 

We obtain from Theorem~\ref{thm:Confinv.main} conformal invariants of the form~(\ref{eq:Confinv.global-confinv-DS}) when $\phi =1$. In this case, given any mixed equivariant cycle $\omega \in C_\ev(G,M)$, for every $G$-invariant metric $g\in \sC$, we have
\begin{equation*}
I_g^1(\omega)=\int_M \hat{A}(R^M)\wedge \omega_0.
\end{equation*}
Note that the conformal invariance of the above integral is also an immediate consequence of the well-known conformal invariance of the $\hat{A}$-form $ \hat{A}(R^M)$ (see Avez~\cite{Av:PANAS70} and Chern-Simons~\cite{CS:AM74}).

\end{document}